\documentclass[12pt]{article}

\usepackage[english]{babel}
\usepackage{amsmath}
\usepackage{amsthm}
\usepackage{amsfonts}
\usepackage{tikz}
\usepackage{amssymb}
\usepackage{extpfeil}
\usepackage[all]{xy}
\usepackage{comment}
\usepackage{caption}

\numberwithin{equation}{section}

\newtheorem{proposition}{Proposition}[section]
\newtheorem{theorem}[proposition]{Theorem}
\newtheorem{lemma}[proposition]{Lemma}
\newtheorem{definition}[proposition]{Definition}
\newtheorem{example}[proposition]{Example}
\newtheorem{remark}[proposition]{Remark}
\newtheorem{corollary}[proposition]{Corollary}
\newtheorem{assumption}{Assumption}

\def\R{\mathbb{R}}
\def\g{\mathfrak{g}}
\def\w{\mathbf{w}}

\usepackage[letterpaper,top=2cm,bottom=2cm,left=3cm,right=3cm,marginparwidth=1.75cm]{geometry}

\usepackage{graphicx}
\usepackage[colorlinks=true, allcolors=blue]{hyperref}

\title{Reduction of symmetric time-dependent Hamiltonian systems I: presymplectic principal $\R$-bundles}
\author{\small
C. Ben\'{\i}tez,
D. Iglesias Ponte,
J. C. Marrero,
E. Padr\'on\\[0.5em]
\footnotesize Departamento de Matemáticas, Estadística e Investigación Operativa \\ 
\footnotesize Instituto Universitario de Matemáticas y Aplicaciones (IMAULL)\\
\footnotesize ULL-CSIC Geometr\'{\i}a Diferencial y Mec\'anica Geom\'etrica  \\ 
\footnotesize University of La Laguna, La Laguna, Spain\\[0.5em]
\scriptsize e-mail: \texttt{cbeniteg@ull.edu.es}, \texttt{diglesia@ull.edu.es}, \texttt{jcmarrer@ull.edu.es}, \texttt{mepadron@ull.edu.es}
}

\date{}

\begin{document}

\maketitle

\begin{abstract}
The reduction of mechanical presymplectic structures and its application to the reduction of time-dependent Hamiltonian systems were developed in a recent paper \cite{GUTIERREZSAGREDO}. This approach overcomes some limitations of the original Albert reduction \cite{ALBERT1989627}. In this paper we will describe the reduction of time-dependent Hamiltonian systems using the extended formalism. This process combines cotangent bundle reduction with reduction of presymplectic structures of corank 1 and 2. These results generalize previous work \cite{IGNACIO} in the same direction. Several examples will be presented to illustrate the theory from this new perspective.

\bigskip
{\footnotesize 
\noindent\textit{MSC 2020: 53D20, 70G45, 70G65, 70H05, 70H33.}

\noindent\textit{Keywords: Time-dependent Hamiltonian systems, presymplectic structures, Lie group symmetry, momentum maps, reduction process.}}
\end{abstract}


\section{Introduction}

\subsection{Marsden-Weinstein reduction of symmetric Hamiltonian systems on symplectic manifolds}

Given the configuration space $Q$ of a classical mechanical system, its cotangent bundle $T^*Q$, equipped with the canonical symplectic structure $\omega_Q$, is the basic ingredient to describe Hamiltonian Mechanics. More specifically, for a Hamiltonian function $H\colon T^*Q\to \mathbb{R}$, the solutions of Hamilton's equations are just the integral curves of the corresponding Hamiltonian vector field (see, for instance, \cite{AM, MR1021489, LM}).

A method that simplifies the integration of the dynamical equations of a Hamiltonian system with a symmetry Lie group and that sometimes provides new examples of symplectic manifolds, is the reduction process. A particular
case is the so-called Marsden-Weinstein reduction \cite{MARSDEN} (see also \cite{AM}). More precisely, a free and proper action $\phi$ of a Lie group $G$ on a symplectic manifold $(M,\Omega )$ is said to be Hamiltonian, if it is an action by symplectomorphisms and, moreover, there is a map $J\colon M\to \mathfrak{g}^*$,  $\mathfrak{g}^*$ being
the dual of the Lie algebra $\mathfrak{g}$ of $G$, 
such that 
\[
i_{\xi_M}\Omega = dJ_\xi,\quad  \mbox{for all} \quad \xi\in \mathfrak{g},
\]
where $\xi_M\in \mathfrak{X}(M)$ is the fundamental vector field of $\phi$ corresponding to the Lie algebra element $\xi$ and $J_\xi\colon M\to \mathbb{R}$ is the smooth function given by
\[
J_\xi (x) = J(x)( \xi), \quad \mbox{for} \quad x \in M.
\] 
$J$ is the momentum map for the symplectic action $\phi$. In addition, if $J$ is equivariant with respect to $\phi$ and the coadjoint representation $\mbox{Ad}^\ast \colon G\times \mathfrak{g}^* \to \mathfrak{g}^*$, given a regular value $\nu$ of $J$, then the isotropy subgroup $G_\nu$ for the coadjoint representation acts freely and properly
on $J^{-1}(\nu)$, the quotient $M_\nu := J^{-1}(\nu ) / G_\nu$ is a smooth manifold and  there is a naturally induced ``reduced'' symplectic structure $\Omega_\nu$ on $M_\nu$. If, in addition, there is a $G$-invariant
Hamiltonian function $H \colon M \to \mathbb{R}$ then, for each $\xi\in \mathfrak{g}$, the smooth function $J_\xi$ is a first integral of the 
Hamiltonian vector field $X_H$ and the reduction process also allows to describe the reduced dynamics as the symplectic dynamics in $(M_\nu, \Omega_\nu)$ associated with a reduced Hamiltonian function $H_\nu \colon M_\nu \to \mathbb{R}$.

In the particular case when $M$ is the cotangent bundle $T^*Q$ of the configuration space $Q$ endowed with the canonical symplectic structure, then the typical Hamiltonian action of a Lie group $G$ on $T^*Q$ is the cotangent lift of a free and proper action of $G$ on $Q$ and the momentum map $J_c: T^*Q \to \g^*$ is just the canonical momentum map given by
$$J_c(\alpha_q)(\xi) = \alpha_q(\xi_Q(q)), \quad \text{for} \quad \alpha_q\in T_q^*Q \quad \text{and} \quad \xi \in \g.$$

\subsection{Time-dependent Hamiltonian systems and presymplectic structures}

For time-dependent Hamiltonian systems, the configuration space is the total space of a fibration over the real line,
\[
  \Pi:Q\longrightarrow \mathbb{R},
\]
called the configuration bundle. Then, the phase space of momenta is the dual bundle $V^*\Pi$ of the vertical bundle $V\Pi$ of $\Pi$. One may also consider the extended phase space of momenta $T^\ast Q$ and it is important to stress that $T^*Q$ is a principal $\mathbb{R}$-bundle over $V^*\Pi$, where the dual map  $\mu \colon T^*Q\to V^\ast \Pi$ of the inclusion $V\Pi\hookrightarrow TQ$ is the principal $\mathbb{R}$-bundle projection (see \cite{GRABOWSKA2004398,IGNACIO}). 
In this context, one may use presymplectic structures for the geometric formulation of time-dependent Hamiltonian systems. Indeed, given a Hamiltonian section $h:V^{*}\Pi\to T^{*}Q$ of $\mu\colon T^*Q\to V^*\Pi$, it determines a homogeneous function $F_h:T^{*}Q\to\mathbb{R}$ on $T^*Q$. Moreover, the Hamiltonian section induces the closed two-forms $\Omega_h$ and $\omega_h$ on $V^*\Pi$ and $T^*Q$, respectively, given by
\[
  \Omega_h=h^{*}\omega_Q, \qquad
  \omega_h=\omega_Q+dF_h\wedge \eta_0,
\]
with $\eta_0=(\Pi\circ\pi_Q)^{*}(dt)$.  The 2-form $\omega_h$ defines a presymplectic structure of corank two on $T^{*}Q$ and the form $\Omega_h$ defines a presymplectic structure of corank one on $V^{*}\Pi$, that is, $(V^*\Pi, \Omega_h)$ is a mechanical presymplectic manifold following the terminology in \cite{GUTIERREZSAGREDO}. Indeed, one can show the following facts:
\begin{itemize}
    \item[(i)] $\ker \omega_h = \langle R , X_{F_h}^{\omega_Q} \rangle$, where $R \in \mathfrak{X}(T^*Q)$ is the infinitesimal generator of the $\R$-principal action on $T^*Q$ and $X_{F_h}^{\omega_Q}\in \mathfrak{X}(T^*Q)$ is the Hamiltonian vector field of $F_h$ with respect to $\omega_Q$.

    \item[(ii)] $X_{F_h}^{\omega_Q}$ is $\mu$-projectable over a vector field $R_h \in \mathfrak{X}(V^*\Pi)$, which encodes the time-dependent Hamiltonian system associated to $h$, and $\ker\Omega_h = \langle R_h \rangle$.
\end{itemize}
The following diagram illustrates the situation.
\begin{figure}[ht]
\captionsetup{name=Diagram}
\centering
\[
\xymatrix{
\left(T^*Q,\omega_h,R,X_{F_h}^{\omega_Q}\right)
\ar[d]^{\mu}
\ar@(ul,u)[]^{\mathbb{R}}
\\
\left(V^*\Pi,\Omega_h,R_h\right)
}
\]

\captionsetup{font=footnotesize}
\caption{Principal $\R$-bundle}
\label{diag:reduction}
\end{figure}

In order to develop a reduction procedure for time-dependent Hamiltonian systems, given the configuration bundle $\Pi\colon Q \to \mathbb{R}$, we consider an action $\phi$ of  a Lie group $G$ on $Q$. In \cite{IGNACIO}, the fibration $\Pi$ was assumed to be $G$-invariant. However, as already shown in \cite{GUTIERREZSAGREDO}, this approach is not entirely satisfactory since, for instance, it does not allow conserved quantities with explicit time dependence.

Therefore, we will relax this assumption here, allowing the action of the symmetry group not to preserve $\Pi$ itself, that is, transformations translating the time variable are allowed.
Thus, it is enough to assume that the one-form $\eta=\Pi^{*}(dt)$ is $G$-invariant. This condition is equivalent to the existence of a multiplicative function $m:G\to\mathbb{R}$, or, equivalently,  of an associated Lie algebra $1$-cocycle $c_\phi\colon \mathfrak{g}\to\mathbb{R}$, such that
\begin{equation}\label{intro: condition}
    \Pi \circ \phi_g = \Pi + m(g), \quad \text{for all} \quad g\in G.
\end{equation}
The previous condition is also equivalent to the natural fact that the principal $\R$-action on $T^*Q$ and the cotangent-lifted action on $T^*Q$ commute.

\subsection{Our contribution}

Under condition (\ref{intro: condition}), the principal $\mathbb{R}$-bundle projection $\mu$ is $G$-equivariant  with respect to the cotangent lift of $\phi$ on $T^{*}Q$ and the induced action on $V^{*}\Pi$.  Moreover, if the Hamiltonian section is equivariant, the corresponding homogeneous Hamiltonian function $F_h$ is invariant and one obtains momentum maps adapted to the presymplectic setting.  On $T^{*}Q$, the momentum map takes the form
\[
  J(\alpha_q)=J_c(\alpha_q)-F_h(\alpha_q)c_\phi,
\]
which modifies the canonical cotangent bundle momentum map by a term determined by the extended Hamiltonian function $F_h$ and the $1$-cocycle $c_\phi$. On $V^*\Pi$, the momentum map $J^V:V^*\Pi \to \g^*$ is characterized by the relation 
$$J = J^V \circ \mu.$$

Then, we prove a version of Noether theorem for the time-dependent Hamiltonian dynamics $R_h$. Namely, for each $\xi \in \g$, the smooth function $J_\xi^V: V^*\Pi \to \R$ is a first integral of $R_h$. The function $J_\xi^V$ may depend explicitly on time (see Example \ref{ex: momentum map} and Section \ref{subsection: ex Presymplectic Hamiltonian Systems}). Moreover, using the reduction theory of presymplectic manifolds developed in \cite{Echevarria}, 
we discuss a reduction procedure (for time-dependent Hamiltonian systems) at the extended phase space of momenta as well as the restricted phase space of momenta. More precisely, for a regular value $\nu\in\mathfrak{g}^{*}$ of the momentum maps, the level sets $J^{-1}(\nu)$ and $(J^V)^{-1}(\nu)$ give rise to quotient spaces
\[
  (T^{*}Q)_\nu=J^{-1}(\nu)/G_\nu, \qquad
  (V^{*}\Pi)_\nu=(J^V)^{-1}(\nu)/G_\nu.
\]
These quotients inherit presymplectic structures $\omega_h^\nu$ and $\Omega_h^\nu$ of corank two and corank one, respectively, and they remain related by a reduced principal $\mathbb{R}$-bundle
\[
  \mu_\nu:(T^{*}Q)_\nu\longrightarrow (V^{*}\Pi)_\nu.
\]

Moreover, as in the unreduced setting, we prove that 
$$\ker \omega_h^\nu = \langle R_\nu, X_\nu \rangle,$$
where $R_\nu$ is the infinitesimal generator of the $\R$-action on $(T^*Q)_\nu$ and $X_\nu \in \mathfrak{X}((T^*Q)_\nu)$ is the reduction of the Hamiltonian vector field $X_{F_h}^{\omega_Q} \in \mathfrak{X}(T^*Q)$. In addition, $X_\nu$ is $\mu_\nu$-projectable on the reduced time-dependent dynamics $R_h^\nu \in \mathfrak{X}((V^*\Pi)_\nu)$ and 
$$\ker \Omega_h^\nu = \langle R_h^\nu\rangle.$$
We remark that, in order to ensure that the reduced 2-form $\omega^\nu _h$ (respectively, $\Omega^\nu _h$) is of corank two (resp., of corank one), we have to impose that 
\[
  X_{F_h}^{\omega_Q}(\alpha_q) \notin T_{\alpha_q}(G \cdot \alpha_q), \quad \text{for} \quad \alpha_q \in T_q^*Q, \quad q\in Q,
\]
that is, we have to exclude the relative equilibria of the vector field $X_{F_h}^{\omega_Q}$ (see \cite[Definition 4.1.1]{Marsden_1992}). We postpone for a future paper the analysis of these points (see Section \ref{sec: future}).

So, the following diagram illustrates the reduced setting.
\begin{figure}[ht]
\captionsetup{name=Diagram}
\centering
\[
\xymatrix{
\left((T^*Q)_\nu,\omega_h^\nu,R_\nu,X_\nu\right)
\ar[d]^{\mu_\nu}
\ar@(ul,u)[]^{\mathbb{R}}
\\
\left((V^*\Pi)_\nu,\Omega_h^\nu,R_h^\nu\right)
}
\]
\captionsetup{font=footnotesize}
\caption{Reduced principal $\R$-bundle}
\label{diag:reduction 2}
\end{figure}

The reader can compare with Diagram \ref{diag:reduction}, realizing that both structures coincide.

\subsection{Organization of the paper}
In Section \ref{sec: hamiltonian formulation} we recall the Hamiltonian formulation of time-dependent Hamiltonian systems in terms of the principal $\mathbb{R}$-bundle $\mu:T^{*}Q\to V^{*}\Pi$. In particular, for a Hamiltonian section $h:V^*\Pi \to T^*Q$ it is introduced the presymplectic forms $\omega_h$ and $\Omega_h$, and developed the associated dynamics.  In Section \ref{sec: actions}, we analyze the lifted actions on $T^{*}Q$ and $V^{*}\Pi$ under the invariance condition for $\eta$. In Section \ref{sec: momentum map}, the corresponding presymplectic momentum maps and a Noether theorem are derived. In Section \ref{sec: reduction}, we describe the reduction procedure, proving that the reduced spaces inherit the appropriate presymplectic structures and dynamics. In Section \ref{sec: example elroy}, the previous results are applied to the reduction of an interesting mechanical system: the time-dependent Elroy's Beanie. Finally, in Section \ref{sec: future}, we present the conclusions and some future lines of research.

\bigskip 
\textbf{Acknowledgements:} C. Benítez thanks Gobierno de Canarias and University of La Laguna for an Initium ULL predoctoral grant. The authors acknowledge financial
support from the Spanish Ministry of Science and Innovation under grant PID2022-137909-NB-C22 and useful comments from Nicola Sansonetto about the example in Section \ref{sec: example elroy}.


\section{Hamiltonian formulation}\label{sec: hamiltonian formulation}

\subsection{Principal $\R$-bundle}
Let $Q$ be a connected manifold of dimension $n+1$ and $\Pi: Q\to \R$ be a fibration. Denote by 
$\tau_Q:TQ\to Q$ the canonical projection of the tangent bundle $TQ$ on $Q$. The vertical bundle $V\Pi$ is the subbundle of $TQ$ given by 
\begin{equation}\label{eq: def VPi}
    V\Pi = \{u \in TQ \,\mid\, \eta(\tau_Q(u))(u) = 0\},
\end{equation}
where 
\begin{equation}\label{eq: def eta}
    \eta= \Pi^*(dt) \in \Omega^1(Q).
\end{equation}

If $T^*Q$ (respectively, $V^*\Pi$) is the cotangent bundle of $Q$ (respectively, the dual vector bundle of $V\Pi)$, then the dual map $\mu : T^*Q \to V^*\Pi$ of the inclusion $i: V\Pi \hookrightarrow TQ$ is a principal $\R$-bundle whose principal action $\psi:\R \times T^*Q \to T^*Q$ is defined by
\begin{equation}\label{eq:action psi}
    \psi(s,\alpha_q) = \alpha_q + s \eta(q), \quad \text{for all} \,\,\,\, s\in \R \,\,\,\,\text{and}\,\,\,\, \alpha_q\in T^*_qQ,
\end{equation}
(see, for instance, \cite{GRABOWSKA2004398, IGNACIO}).

If we consider a Hamiltonian section $h\colon V^*\Pi \to T^*Q$ of the principal $\R$-bundle $\mu: T^*Q \to V^*\Pi$, one can associate an extended Hamiltonian function $F_h : T^*Q \to \mathbb{R}$, which is homogeneous of degree $1$ with respect to $\psi$, i.e., $R(F_h)=1$, where $R$ is the infinitesimal generator of $\psi$.

\begin{proposition}\label{prop:function F_h} \cite{GRABOWSKA2004398,IGNACIO} 
    There is a one-to-one correspondence between Hamiltonian sections $h:V^*\Pi \to T^*Q$ of $\mu$, and homogeneous functions $F_h:T^*Q\to\R$ of degree $1$ with respect to the action $\psi$ defined in (\ref{eq:action psi}). This relation is characterized by
    \begin{equation}\label{eq: alpha_q - h o mu = F_h eta}
        \alpha_q-h(\mu(\alpha_q)) = F_h(\alpha_q)\eta(q), \quad \text{for all} \,\,\,\, \alpha_q\in T^*_qQ \,\,\,\,\text{and}\,\,\,\, q\in Q.
    \end{equation}
    
\end{proposition}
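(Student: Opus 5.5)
The plan is to build the two maps of the correspondence explicitly from the principal $\R$-bundle structure of $\mu$, and then check that they are mutually inverse. Two basic facts will be used throughout. First, $\mu(\alpha_q)=\alpha_q|_{V_q\Pi}$. Second, since $\Pi$ is a fibration, $\eta(q)\neq 0$ at every $q$. So the kernel of $\mu$ restricted to $T_q^*Q$ is exactly the line $\R\,\eta(q)$, which is the annihilator of $V_q\Pi$. Consequently, two covectors $\alpha_q,\beta_q\in T_q^*Q$ have the same image under $\mu$ if and only if $\alpha_q-\beta_q=s\,\eta(q)$ for a unique $s\in\R$. This is precisely the statement that $\psi$ acts freely and transitively on the fibres of $\mu$.

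\emph{From sections to functions.} Let $h$ be a section of $\mu$, so $\mu\circ h=\mathrm{id}$. For each $\alpha_q$, both $\alpha_q$ and $h(\mu(\alpha_q))$ lie in $T_q^*Q$: $h$ sends the fibre of $V^*\Pi$ over $q$ into $T_q^*Q$, because $\mu$ covers the identity of $Q$. They also have the same image under $\mu$. By the observation above, there is a unique real number $F_h(\alpha_q)$ satisfying (\ref{eq: alpha_q - h o mu = F_h eta}).

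To see that $F_h$ is smooth, pick any vector field $Z$ on $Q$ with $\eta(Z)=1$; one exists because $\eta$ never vanishes. Pairing (\ref{eq: alpha_q - h o mu = F_h eta}) with $Z$ gives the explicit formula $F_h(\alpha_q)=\alpha_q(Z(q))-h(\mu(\alpha_q))(Z(q))$, which is visibly smooth.

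For homogeneity, note that $\mu(\psi(s,\alpha_q))=\mu(\alpha_q)$. Hence $F_h(\psi(s,\alpha_q))=F_h(\alpha_q)+s$, and differentiating at $s=0$ gives $R(F_h)=1$.

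\emph{From functions to sections.} Now let $F$ be homogeneous of degree $1$. I would read this either as $R(F)=1$ or as $F\circ\psi_s=F+s$; these are equivalent, since $R(F)=1$ implies $\frac{d}{ds}F(\psi_s(\alpha))=1$. Given $F$, define $\tilde h(\alpha_q)=\alpha_q-F(\alpha_q)\eta(q)$. This map is smooth. It is also constant on $\mu$-fibres: replacing $\alpha_q$ by $\alpha_q+s\eta(q)$ changes $\alpha_q$ by $s\eta(q)$ and changes $F$ by $s$, so the two contributions cancel.

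Since $\mu$ is a surjective submersion, $\tilde h$ therefore descends to a smooth map $h\colon V^*\Pi\to T^*Q$ with $h\circ\mu=\tilde h$. Locally one can instead compose with a local section of $\mu$, for example $\beta\mapsto$ the extension of $\beta$ vanishing on $Z$. Finally, $\mu\circ h\circ\mu=\mu\circ\tilde h=\mu$, because $\mu(\eta(q))=0$. Surjectivity of $\mu$ then gives $\mu\circ h=\mathrm{id}$, so $h$ is a section.

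\emph{Mutual inverses.} Both constructions are characterized by the same identity (\ref{eq: alpha_q - h o mu = F_h eta}), and uniqueness of the coefficient of $\eta(q)$ shows they invert each other. In the one direction, $F_h$ constructed from $h$ returns $h$, since $h(\mu(\alpha))=\alpha-F_h(\alpha)\eta$. In the other, the $h$ built from $F$ has $F_h=F$ by uniqueness of $s$.

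The only genuinely delicate points are the smoothness claims. The first is the smoothness of $F_h$, which the auxiliary field $Z$ handles. The second is the descent of $\tilde h$ through $\mu$, which follows from $\mu$ being a surjective submersion with connected fibres, since the fibres are $\R$-orbits. Everything else is pointwise linear algebra.
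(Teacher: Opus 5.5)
Your proof is correct and complete. The paper does not prove this proposition; it quotes it from \cite{GRABOWSKA2004398,IGNACIO}, so the relevant comparison is with the standard argument for principal $\R$-bundles, which is the setting of those works. There, a global section $h$ gives a trivialization $V^*\Pi\times\R\to T^*Q$, $(\beta,s)\mapsto\psi_s(h(\beta))$, and $F_h$ is the $\R$-component of its inverse. Smoothness and $F_h\circ\psi_s=F_h+s$ are then automatic. Conversely, if $R(F)=1$, the level set $F^{-1}(0)$ meets each $\psi$-orbit exactly once and transversally, so it is the image of a section.

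Your route instead uses structure specific to $\mu$: its fibres are the affine lines $\alpha_q+\R\,\eta(q)$, and you add an auxiliary field $Z$ with $\eta(Z)=1$. This gives fully explicit formulas, e.g.\ $F_h=\widehat{Z}-\widehat{Z}\circ h\circ\mu$, with $\widehat{Z}$ the fibrewise linear function of $Z$. The cost is that it does not transfer verbatim to abstract principal $\R$-bundles.

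Two small simplifications are available. First, connectedness of the fibres plays no role in the descent of $\tilde h$, because you verified constancy on entire fibres directly. Second, the section $s_Z$ you mention (extend $\beta_q$ by zero on $Z(q)$) is global, not merely local. You can therefore set $h=\tilde h\circ s_Z$ outright and bypass the descent lemma. The identity $h\circ\mu=\tilde h$ then follows from fibre-constancy of $\tilde h$, since $s_Z(\mu(\alpha_q))$ and $\alpha_q$ lie in the same $\mu$-fibre.
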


A direct consequence of \eqref{eq: alpha_q - h o mu = F_h eta} is the identity
    \begin{equation}\label{eq:Fh(beta_q+seta(q))=Fh(beta_q)+s}
        F_h(\alpha_q +s\eta(q))=F_h(\alpha_q)+s, \quad \text{for all} \,\,\,\, s\in \R \,\,\,\,\text{and}\,\,\,\, \alpha_q\in T^*_qQ.
    \end{equation}

The following diagram illustrates the previous situation
$$\xymatrix{ & T^*Q \ar@(u,ur)[]^{(\R, \psi)} \ar[dl]_{\pi_Q} \ar[dr]_\mu \ar[r]^{F_h} & \R \\ Q  \ar[d]_\Pi && V^*\Pi \ar@/_0.8pc/[ul]_h \ar[ll]_{\Pi^*_{1,0}} \ar[dll]^{\Pi^*_1} \\ \R &&}$$
where $\pi_Q: T^*Q \to Q$ is the canonical projection, 
$\Pi^*_{1,0}:V^*\Pi \to Q$ is the bundle projection of $V^*\Pi$ and we denote by $\Pi^*_1:V^*\Pi\to\R$ the composition $\Pi^*_1 = \Pi \circ \Pi^*_{1,0}$.

\begin{remark}    
The aforementioned spaces can be described in local coordinates as follows (see \cite{IGNACIO}). First, since $\Pi:Q\to \R$ is a submersion, one may consider local coordinates $(q^i,t)$ on $Q$ adapted to $\Pi$, i.e., such that $\Pi(q^i,t) = t$. Denote by $(q^i,t,p_i,p)$ (resp., $(q^i,t,p_i)$) the corresponding local coordinates on $T^*Q$ (resp., on $V^*\Pi$). With respect to them, we have that  
$$\Pi^*_{1,0}(q^i,t,p_i) = (q^i,t) \quad \text{and} \quad \Pi^*_1(q^i,t,p_i) = t.$$
Moreover, the local expressions of the principal $\R$-bundle projection $\mu$ and the corresponding principal action $\psi$ are 
\begin{equation}\label{eq: local mu and psi}
    \mu(q^i,t,p_i,p) = (q^i,t,p_i) \quad \text{and} \quad \psi(s, (q^i,t,p_i,p)) = (q^i, t, p_i, s+p),
\end{equation}
for all $s\in \R$. Finally, if the local expression of the Hamiltonian section $h:V^*\Pi \to T^*Q$ is given by 
\begin{equation}\label{eq: local h}
    h(q^i,t,p_i) = (q^i,t,p_i, -H(q^i,t,p_i)),
\end{equation}
where $H$ is a local function on $V^*\Pi$, then,
\begin{equation}\label{eq: local F_h}
    F_h(q^i,t,p_i,p) = p + H(q^i,t,p_i).
\end{equation}

\end{remark}

\subsection{Presymplectic structures on $T^*Q$ and $V^*\Pi$}

Let $\Pi: Q \to \R$ be a fibration and $h:V^*\Pi \to T^*Q$ a Hamiltonian section of the $\R$-principal bundle $\mu : T^*Q \to V^*\Pi$. The purpose of this subsection is to endow $T^*Q$ and $V^*\Pi$ with certain $2$-forms that we will refer to as presymplectic structures induced by the section $h$. 

On $V^*\Pi$ we consider the closed $2$-form 
\begin{equation}\label{eq:def Omega_h}
    \Omega_h = h^*(\omega_Q),
\end{equation}
where $\omega_Q = -d\lambda_Q$ is the canonical symplectic structure on $T^*Q$, with $\lambda_Q$ the Liouville $1$-form on $T^*Q$. Moreover, we have another closed $2$-form on $T^*Q$ given by
\begin{equation}\label{eq:def omega_h}
    \omega_h = \omega_Q + dF_h \wedge \eta_0,
\end{equation}
where
\begin{equation}\label{eq:def eta_0}
    \eta_0 = (\Pi \circ \pi_Q)^*(dt)=\pi _Q^*\eta .
\end{equation}

The following result shows how the closed $2$-forms $\Omega_h$ and $\omega_h$ are related.
\begin{proposition}\label{prop:mu*(Omega_h)=omega_h Omega_h = h*(omega_h)}
    Let $\mu: T^*Q \to V^*\Pi$ be the principal $\R$-bundle and $h:V^*\Pi \to T^*Q$ a Hamiltonian section of $\mu$. Then, 
    \begin{itemize}
        \item[(i)] $\Omega_h = h^*(\omega_h)$.
        \item[(ii)] $\omega_h = \mu^*(\Omega_h)$.
    \end{itemize}
\end{proposition}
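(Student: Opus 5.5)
Part (i) follows from (ii), since $\mu\circ h=\mathrm{id}_{V^*\Pi}$ gives $h^*(\omega_h)=h^*\mu^*(\Omega_h)=(\mu\circ h)^*\Omega_h=\Omega_h$. It can also be checked directly. We have $F_h\circ h=0$, because (\ref{eq: alpha_q - h o mu = F_h eta}) applied to $\alpha_q=h(\beta)$ gives $F_h(h(\beta))\eta(q)=0$, and $\eta(q)\neq 0$ since $\Pi$ is a submersion. Hence $h^*(dF_h)=d(F_h\circ h)=0$, and so $h^*\omega_h=h^*\omega_Q=\Omega_h$. I will establish (i) directly and then concentrate on (ii).

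For (ii) the plan is to work at the level of the Liouville form. Write every $\alpha_q$ as $\alpha_q=h(\mu(\alpha_q))+F_h(\alpha_q)\eta(q)$, using (\ref{eq: alpha_q - h o mu = F_h eta}). The key identity I aim to prove is
\[
\lambda_Q=\mu^*(h^*\lambda_Q)+F_h\,\eta_0 .
\]
To check it, evaluate $\lambda_Q$ at $\alpha_q$ on $v\in T_{\alpha_q}(T^*Q)$, which gives $\alpha_q(T\pi_Q(v))$. Then compare with the right-hand side. The map $h\circ\mu$ covers the identity of $Q$, because $\pi_Q\circ h=\Pi^*_{1,0}$ and $\Pi^*_{1,0}\circ\mu=\pi_Q$. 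Therefore $(h\circ\mu)^*\lambda_Q$ at $\alpha_q$ on $v$ equals $h(\mu(\alpha_q))(T\pi_Q(v))$. Also $F_h\eta_0$ at $v$ equals $F_h(\alpha_q)\eta(q)(T\pi_Q(v))$. Summing the two terms and using the decomposition of $\alpha_q$ recovers $\alpha_q(T\pi_Q(v))$.

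Taking $-d$ of the identity gives
\[
\omega_Q=\mu^*(h^*\omega_Q)-dF_h\wedge\eta_0,
\]
because $d\eta_0=\pi_Q^*d\eta=0$. Rearranging yields $\mu^*\Omega_h=\omega_Q+dF_h\wedge\eta_0=\omega_h$, which is (ii).

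The only delicate point is the identity for $\lambda_Q$, specifically the fact that $h\circ\mu$ is fibre-preserving over $Q$, so that the tautological form pulls back as claimed. As a sanity check I would compare with local coordinates. Using (\ref{eq: local h}) and (\ref{eq: local F_h}),
\[
\lambda_Q=p_i\,dq^i+p\,dt=\bigl(p_i\,dq^i-H\,dt\bigr)+(p+H)\,dt,
\]
which matches the identity term by term.
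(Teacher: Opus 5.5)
Your proposal is correct and matches the paper's proof: you get (i) from $F_h\circ h=0$, and (ii) from the Liouville-form identity $\mu^*(h^*\lambda_Q)=\lambda_Q-F_h\eta_0$ by taking $-d$. You also spell out the ``straightforward computation'' the paper omits, using that $h\circ\mu$ covers the identity of $Q$, and your remark that (i) follows formally from (ii) via $\mu\circ h=\mathrm{id}_{V^*\Pi}$ is a valid shortcut.
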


\begin{proof}
    First, note that, using \eqref{eq: alpha_q - h o mu = F_h eta},
    \begin{equation}\label{eq: F_h o h = 0}
        F_h \circ h = 0.
    \end{equation}
    
    As a result, it follows that
    $$h^*(dF_h \wedge \eta_0) = d(F_h \circ h) \wedge h^*(\eta_0) = 0.$$
    Thus, $h^*(\omega_h) = h^*(\omega_Q) = \Omega_h$.

    For the second statement, let 
    \begin{equation}\label{eq: lambda_h}
        \lambda_h = h^*\lambda_Q.
    \end{equation}
    
    A straightforward computation, using the definition of $\lambda_Q$, (\ref{eq: alpha_q - h o mu = F_h eta}) and (\ref{eq: lambda_h}),
    shows that
    \begin{equation}\label{eq: mu*lamda_h = lamda_Q -f_h etA_0}
        \mu^*\lambda_h = \lambda_Q - F_h\eta_0.
    \end{equation}
    Using (\ref{eq:def Omega_h}), (\ref{eq:def omega_h}) and (\ref{eq: lambda_h}), we deduce
    
    $$d(\mu^*\lambda_h) =-\mu^*(h^*(\omega_Q))= -\mu^*(\Omega_h),$$
    and 
    $$d(\lambda_Q - F_h\eta_0)= -\omega_Q -dF_h \wedge \eta_0=-\omega _h.$$
    Taking differentials in both sides of (\ref{eq: mu*lamda_h = lamda_Q -f_h etA_0}) we conclude
    $$\mu^*(\Omega_h)= \omega_h.$$
\end{proof}
\begin{remark}\label{remark: structures coordinates}
If we consider Darboux coordinates on $T^*Q$ adapted to the fibration $\Pi$, then
\begin{equation}\label{eq: coordinates omega_Q}
    \omega_Q = dq^i \wedge dp_i + dt\wedge dp, \quad \eta = dt \quad \text{and} \quad \eta_0 = dt.
\end{equation}
Hence, from (\ref{eq: local h}) and (\ref{eq: local F_h}), 
\begin{equation}\label{eq: coordinates omega_h and coordinates Omega_h}
    \begin{split}
        \omega_h & = dq^i \wedge dp_i + \frac{\partial H}{\partial q^i}dq^i \wedge dt + \frac{\partial H}{\partial p_i}dp_i \wedge dt,
        \\ \Omega_h & = dq^i \wedge dp_i + \frac{\partial H}{\partial q^i}dq^i \wedge dt + \frac{\partial H}{\partial p_i}dp_i \wedge dt.
    \end{split}
\end{equation}
\end{remark}

\begin{definition}\cite{Echevarria, GUTIERREZSAGREDO}
    A presymplectic structure on an $m$-dimensional manifold $M$ is a closed $2$-form $\omega\in \Omega^2(M)$ with constant rank $2k$. The pair $(M,\omega)$ is called a presymplectic manifold of corank $m-2k$.
    
    Moreover, a vector field $X\in \mathfrak{X}(M)$ is said to be a Hamiltonian vector field with respect to the presymplectic structure $\omega$ if there exists a function $f_X: M \to \R$ such that
$$i_X\omega = df_X.$$
\end{definition}

\begin{remark}
    In \cite{GUTIERREZSAGREDO}, the presymplectic manifolds $(M, \omega)$ of corank $1$ are called mechanical presymplectic structures.
\end{remark}

In what follows we will show that $\omega_h$ defines a presymplectic structure of corank $2$ on $T^*Q$. First, we prove the following lemma.
\begin{lemma}
If $R$ denotes the infinitesimal generator of the principal action $\psi$ on $T^*Q$, given in (\ref{eq:action psi}), and $\eta_0 =(\Pi \circ \pi_Q)^*(dt)$, then 
\begin{equation}\label{eq:eta_0 = -i_Romega_Q}
    \eta_0 = -i_R\omega_Q.
\end{equation}
\end{lemma}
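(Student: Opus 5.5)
We will verify the identity $\eta_0=-i_R\omega_Q$ first in local coordinates and then, to avoid any dependence on the chart, give an intrinsic argument based on the Liouville form. The two arguments are independent.

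\textbf{Local check.} In Darboux coordinates $(q^i,t,p_i,p)$ adapted to $\Pi$, the local expression (\ref{eq: local mu and psi}) of $\psi$ gives $\psi(s,(q^i,t,p_i,p))=(q^i,t,p_i,p+s)$. Hence the infinitesimal generator is $R=\frac{\partial}{\partial p}$. By (\ref{eq: coordinates omega_Q}), $\omega_Q=dq^i\wedge dp_i+dt\wedge dp$, and contracting gives
\[
i_R\omega_Q=i_{\partial/\partial p}(dt\wedge dp)=-dt=-\eta_0 .
\]
Since both sides of (\ref{eq:eta_0 = -i_Romega_Q}) are globally defined $1$-forms and these charts cover $T^*Q$, this already proves the lemma.

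\textbf{Intrinsic argument.} We use Cartan's formula, $\mathcal{L}_R\lambda_Q=d\,i_R\lambda_Q+i_Rd\lambda_Q$, in which $i_Rd\lambda_Q=-i_R\omega_Q$.

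\emph{Step 1: $i_R\lambda_Q=0$.} The vector $R$ is $\pi_Q$-vertical, because the flow of $R$ is $\psi_s$ and $\pi_Q\circ\psi_s=\pi_Q$. By definition, $\lambda_Q$ vanishes on vertical vectors.

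\emph{Step 2: $\mathcal{L}_R\lambda_Q=\eta_0$.} The flow $\psi_s$ is fiber translation by the $1$-form $s\eta$. A standard property of the Liouville form is that translation by a $1$-form $\beta$ satisfies $t_\beta^*\lambda_Q=\lambda_Q+\pi_Q^*\beta$. Taking $\beta=s\eta$ and differentiating at $s=0$ gives $\mathcal{L}_R\lambda_Q=\pi_Q^*\eta=\eta_0$.

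Substituting Steps 1 and 2 into Cartan's formula gives $\eta_0=-i_R\omega_Q$, as claimed.

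The only nontrivial step is the translation identity for $\lambda_Q$ used in Step 2. To verify it, take $v\in T_{\alpha_q}T^*Q$ and use $\pi_Q\circ t_\beta=\pi_Q$:
\[
(t_\beta^*\lambda_Q)(v)=(\alpha_q+\beta(q))(T\pi_Q v)=\lambda_Q(v)+(\pi_Q^*\beta)(v).
\]
Because the local computation settles the lemma on its own, this step is not a real obstacle. The main thing to watch is the sign convention $\omega_Q=-d\lambda_Q$, and the coordinate check confirms that it produces the minus sign in (\ref{eq:eta_0 = -i_Romega_Q}).
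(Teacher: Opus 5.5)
Your proof is correct and is essentially the paper's argument. The paper observes that $R$ is the vertical lift $\eta^V$ of $\eta$ and then cites the standard identity $i_{\eta^V}\omega_Q=-\pi_Q^*\eta$ from \cite[page 241]{MR1021489}, whereas your coordinate check and your Cartan-formula computation via $t_\beta^*\lambda_Q=\lambda_Q+\pi_Q^*\beta$ each give a self-contained proof of that identity, with the sign convention $\omega_Q=-d\lambda_Q$ handled correctly.
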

\begin{proof}
From the definition of $\psi$,
\begin{equation}\label{eq:def R}
    R(\alpha_q) = \left.\frac{d}{dt}\right\vert_{t=0}(\alpha_q + t \eta(q))
\end{equation}
that is, $R$ is just the vertical lift $\eta^V$ of $\eta$, which implies that $i_{\eta^V}\omega_Q = -\pi_Q^*(\eta)$,
(see, for instance, \cite[page 241]{MR1021489}). So,
$$i_R\omega_Q = -\pi_Q^*(\eta) = -(\Pi \circ \pi_Q)^*(dt) = -\eta_0.$$    
\end{proof}

Now, we prove the announced result.

\begin{proposition}\label{prop:ker omega_h}
    The $2$-form $\omega_h$ defined in (\ref{eq:def omega_h}) is a presymplectic structure on $T^*Q$ of corank $2$. In fact, we have 
    $$\ker\omega_h = \langle R, X_{F_h}^{\omega_Q}\rangle,$$
    where $R$ denotes the infinitesimal generator of the action $\psi$ associated to the principal bundle $\mu: T^*Q \to V^*\Pi$ and $X_{F_h}^{\omega_Q}\in \mathfrak{X}(T^*Q)$ is the Hamiltonian vector field of $F_h: T^*Q \to \R$ with respect to $\omega_Q$, that is,
    $$i_{X_{F_h}^{\omega_Q}}\omega_Q = d F_h.$$
    
\end{proposition}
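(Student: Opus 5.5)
The plan is to show that $R$ and $X_{F_h}^{\omega_Q}$ lie in $\ker\omega_h$, that they are pointwise linearly independent, and that the kernel has dimension at most $2$. Closedness of $\omega_h$ is immediate from (\ref{eq:def omega_h}), since $\omega_Q$ is closed and $dF_h\wedge\eta_0 = d(F_h\eta_0)$. Constant rank will then follow from the kernel being spanned by two independent vector fields at every point.

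\textbf{Step 1: both fields lie in the kernel.} For $R$, contract (\ref{eq:def omega_h}):
$$i_R\omega_h = i_R\omega_Q + R(F_h)\,\eta_0 - \eta_0(R)\,dF_h .$$
By (\ref{eq:eta_0 = -i_Romega_Q}) we have $i_R\omega_Q=-\eta_0$. Homogeneity gives $R(F_h)=1$. Since $R$ is vertical for $\pi_Q$, $\eta_0(R)=0$. Hence $i_R\omega_h=0$.

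For $X=X_{F_h}^{\omega_Q}$ we get
$$i_X\omega_h = dF_h + X(F_h)\,\eta_0 - \eta_0(X)\,dF_h .$$
Here $X(F_h)=\omega_Q(X,X)=0$. Also
$$\eta_0(X) = -\omega_Q(R,X) = \omega_Q(X,R) = dF_h(R) = 1 .$$
So $i_X\omega_h = dF_h - dF_h = 0$.

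\textbf{Step 2: linear independence.} $R$ is $\pi_Q$-vertical, while $\eta_0(X)=1$ shows $X$ is not vertical. Moreover $R$ is nowhere zero, because $\eta=\Pi^*dt$ is nowhere zero, $\Pi$ being a submersion. Hence $R$ and $X$ are independent at every point.

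\textbf{Step 3: the kernel has dimension at most $2$.} This is the main point. Since $\omega_h=\mu^*\Omega_h$ by Proposition \ref{prop:mu*(Omega_h)=omega_h Omega_h = h*(omega_h)}, the rank can be computed either upstairs or downstairs; I will work upstairs. On $\ker\eta_0\cap\ker dF_h$, the form $\omega_h$ coincides with $\omega_Q$. Both $\eta_0$ and $dF_h$ are nonvanishing and independent, because $\eta_0(R)=0$ and $dF_h(R)=1$, so this is a codimension-$2$ subspace $W$.

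Its $\omega_Q$-orthogonal is spanned by the $\omega_Q$-duals of $\eta_0$ and $dF_h$, namely $-R$ and $X$. Neither lies in $W$, since $dF_h(R)=1$ and $\eta_0(X)=1$. Hence $W\cap W^{\omega_Q}=0$, so $\omega_Q|_W$ is nondegenerate, of rank $2n$.

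A 2-form whose restriction to a codimension-$2$ subspace has rank $2n$ has rank at least $2n$ on the $(2n+2)$-dimensional space. So $\dim\ker\omega_h\le 2$. Combined with Steps 1–2, $\ker\omega_h=\langle R,X_{F_h}^{\omega_Q}\rangle$ and $\omega_h$ has constant corank $2$.

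As a sanity check, the local expression (\ref{eq: coordinates omega_h and coordinates Omega_h}) also makes the rank $2n$ visible directly. The term $dq^i\wedge dp_i$ has rank $2n$, and $\omega_h$ contains no $dp$ at all, so the rank cannot reach $2n+2$.
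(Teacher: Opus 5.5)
Your proof is correct. Steps 1--2 match the paper's argument; the two routes diverge only in Step 3, where you show the kernel is no larger than $\langle R, X_{F_h}^{\omega_Q}\rangle$.

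\textbf{How the paper does Step 3.} The paper takes an arbitrary $Y\in\ker\omega_h$ and rewrites $0=i_Y\omega_Q+Y(F_h)\,\eta_0-\eta_0(Y)\,dF_h$ using $\eta_0=-i_R\omega_Q$ and $dF_h=i_{X_{F_h}^{\omega_Q}}\omega_Q$. This gives $i_{Y-Y(F_h)R-\eta_0(Y)X_{F_h}^{\omega_Q}}\,\omega_Q=0$. Nondegeneracy of $\omega_Q$ then yields the explicit decomposition $Y=Y(F_h)\,R+\eta_0(Y)\,X_{F_h}^{\omega_Q}$ in one line.

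\textbf{How you do it.} You use symplectic linear algebra instead. At each point, $W=\ker\eta_0\cap\ker dF_h$ is exactly the $\omega_Q$-orthogonal of the plane $\langle R,X_{F_h}^{\omega_Q}\rangle$. That plane is $\omega_Q$-symplectic, since $\omega_Q(R,X_{F_h}^{\omega_Q})=-\eta_0(X_{F_h}^{\omega_Q})=-1$. Since also $\omega_h|_W=\omega_Q|_W$, with respect to the splitting $T_{\alpha_q}(T^*Q)=W\oplus\langle R,X_{F_h}^{\omega_Q}\rangle$ the form $\omega_h$ is simply $\omega_Q|_W\oplus 0$.

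\textbf{Trade-offs.} Your picture explains geometrically why the corank is exactly $2$. It needs a couple of extra linear-algebra facts: $\ker(\omega_Q|_W)=W\cap W^{\omega_Q}$, and that the rank of a restriction is at most the rank of the form. The paper's route is shorter and produces the projection formula $Y\mapsto (Y(F_h),\eta_0(Y))$ directly.

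\textbf{One point to tighten.} Saying that neither $R$ nor $X_{F_h}^{\omega_Q}$ lies in $W$ does not by itself give $W\cap\langle R,X_{F_h}^{\omega_Q}\rangle=0$. What you need is that $v\mapsto(\eta_0(v),dF_h(v))$ sends $R\mapsto(0,1)$ and $X_{F_h}^{\omega_Q}\mapsto(1,0)$, hence is injective on the plane. This uses $\eta_0(R)=0$ and $dF_h(X_{F_h}^{\omega_Q})=0$, both of which you already established in Step 1.
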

\begin{proof}
    Firstly, we have that the vector fields $R, \, X_{F_h}^{\omega_Q} \in \ker\,\omega_h$. Indeed, 
    $$i_{X_{F_h}^{\omega_Q}}\omega_h = i_{X_{F_h}^{\omega_Q}}(\omega_Q + dF_h \wedge \eta_0) = dF_h - \eta_0(X_{F_h}^{\omega_Q})dF_h.$$
    Note that, using the homogeneity of $F_h$ with respect to $\psi$ and (\ref{eq:eta_0 = -i_Romega_Q}), we have that 
    \begin{equation}\label{eq:eta_0(X_(F_h)(omega_q))=1}
        \eta_0(X_{F_h}^{\omega_Q}) = -i_R\omega_Q(X_{F_h}^{\omega_Q}) = i_{X_{F_h}^{\omega_Q}}\omega_Q(R) = R(F_h)=1.
    \end{equation}
    So, $X_{F_h}^{\omega_Q} \in \ker\,\omega_h$.

    In addition, using again (\ref{eq:eta_0 = -i_Romega_Q}), it follows that
    $$i_R\omega_h = i_R(\omega_Q + dF_h \wedge \eta_0) = -\eta_0 +R(F_h)\eta_0 -\eta_0(R)dF_h,$$
    and
    \begin{equation}\label{eq:eta_0(R)=0}
        \eta_0(R) = -i_R\omega_Q(R) =0.
    \end{equation}
Therefore, from these equations and the homogeneity of $F_h$ with respect to $\psi$, we have $R \in \ker\,\omega_h$.

    Note that, using (\ref{eq:eta_0(X_(F_h)(omega_q))=1}) and (\ref{eq:eta_0(R)=0}), it follows that $R$ and $X_{F_h}^{\omega_Q}$ are independent. To conclude, we will see that if there exists $X\in \ker\,\omega_h$, then $X$ is a linear combination of $R$ and $X_{F_h}^{\omega_Q}$. In fact, 
    \[
    0 = i_X\omega_h = i_X\omega_Q + X(F_h) \eta_0 -\eta_0(X) dF_h
      =i_X\omega_Q-X(F_h)i_{R}\omega_Q - \eta_0(X)i_{X_{F_h}^{\omega_Q}}\omega_Q.
    \]
    Consequently, $X = X(F_h)R + \eta_0(X)X_{F_h}^{\omega_Q}$, which means that 
    $$\ker\,\omega_h = \langle R, X_{F_h}^{\omega_Q} \rangle.$$
\end{proof}
\begin{remark}
The local expressions (with respect to the coordinates in Remark \ref{remark: structures coordinates}) of the vector fields involved in the previous result are as follows
\begin{equation}\label{eq:coordinates X_F_h^omega_Q and R}
    X_{F_h}^{\omega_Q} = \frac{\partial}{\partial t} - \frac{\partial H}{\partial t}\frac{\partial}{\partial p} + \frac{\partial H}{\partial p_i}\frac{\partial}{\partial q^i} - \frac{\partial H}{\partial q^i}\frac{\partial}{\partial p_i} \quad \text{and} \quad R = \frac{\partial}{\partial p}.
\end{equation}
\end{remark}
From \cite{GUTIERREZSAGREDO}, we know that $\Omega_h$ is a presymplectic structure on $V^*\Pi$ of corank 1, and
\begin{equation}\label{eq: ker Omega_h = R_h}
    \ker\Omega_h = \langle R_h \rangle,
\end{equation}
where the vector field $R_h$ on $V^*\Pi$ is characterized by the conditions 
\begin{equation}\label{eq: i_R_h Omega_h = 0 y i_R_h eta_1 =1}
    i_{R_h}\Omega_h = 0 \quad \text{and} \quad i_{R_h}\eta_1 =1,
\end{equation}
and $\eta_1 = \Pi_{1,0}^*(\Pi^*(dt))= \Pi_{1,0}^*\eta$. The local expression (with respect to the coordinates in Remark \ref{remark: structures coordinates}) of the vector field $R_h$ is given by
\begin{equation}\label{eq: R_h coordinates}
    R_h = \frac{\partial}{\partial t} + \frac{\partial H}{\partial p_i}\frac{\partial}{\partial q^i} - \frac{\partial H}{\partial q^i}\frac{\partial}{\partial p_i}.
\end{equation}

\subsection{Dynamics on $T^*Q$ and $V^*\Pi$}

The vector fields $X_{F_h}^{\omega_Q}$ and $R_h$ on $T^*Q$ and $V^*\Pi$, respectively, determine the dynamics on the corresponding spaces. In this subsection, we analyze how the principal $\mathbb{R}$-bundle projection $\mu$ and the Hamiltonian section $h$ relate these vector fields.

\begin{proposition}\label{prop:X_F_h^omega_Q is mu projectable}
    The vector field $X_{F_h}^{\omega_Q}$ is 
    \begin{itemize}
        \item[(i)] $\R$-invariant with respect to the action $\psi$, i.e.,
        $$T_{\alpha_q}\psi_s(X_{F_h}^{\omega_Q}(\alpha_q)) = X_{F_h}^{\omega_Q}(\psi_s(\alpha_q)), \quad \text{for all} \quad \alpha_q\in T^*_qQ \quad \text{and} \quad s\in \R.$$
        \item[(ii)] $\mu$-projectable over $R_h \in \mathfrak{X}(V^*\Pi)$, i.e.,
        $$T_{\alpha_q}\mu(X_{F_h}^{\omega_Q}(\alpha_q)) = R_h(\mu(\alpha_q)), \quad \text{for all} \quad \alpha_q\in T^*_qQ.$$
    \end{itemize}
\end{proposition}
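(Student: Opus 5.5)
For (i), I would show that $[R, X_{F_h}^{\omega_Q}]=0$. The flow of $R$ is $\psi$, so this is equivalent to $\psi$-invariance. The first step is to check that each $\psi_s$ preserves $\omega_Q$. Since $\psi_s(\alpha_q)=\alpha_q+s\eta(q)$ is fibre translation by the closed $1$-form $s\eta=d(s\Pi)$, we have $\psi_s^*\lambda_Q=\lambda_Q+s\,\pi_Q^*\eta$. Therefore $\psi_s^*\omega_Q=\omega_Q - s\,\pi_Q^*d\eta=\omega_Q$. Equivalently, $\mathcal{L}_R\omega_Q = d\,i_R\omega_Q = -d\eta_0=0$ by (\ref{eq:eta_0 = -i_Romega_Q}).

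The second step uses (\ref{eq:Fh(beta_q+seta(q))=Fh(beta_q)+s}), which gives $F_h\circ\psi_s=F_h+s$, so $d(F_h\circ\psi_s)=dF_h$. A symplectomorphism maps Hamiltonian vector fields to Hamiltonian vector fields, so $(\psi_s)^*X^{\omega_Q}_{F_h}=X^{\omega_Q}_{F_h\circ\psi_s}=X^{\omega_Q}_{F_h}$. This is exactly the statement in (i).

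For (ii), part (i) together with the fact that $\mu$ is the quotient map of the free $\R$-action $\psi$ shows that $X_{F_h}^{\omega_Q}$ projects to a well-defined vector field $Y$ on $V^*\Pi$, with $Y\circ\mu=T\mu\circ X_{F_h}^{\omega_Q}$. It remains to identify $Y=R_h$ through the characterization (\ref{eq: i_R_h Omega_h = 0 y i_R_h eta_1 =1}).

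First I verify $i_Y\Omega_h=0$. Since $\mu$ is a submersion, it suffices to pull back: $\mu^*(i_Y\Omega_h)=i_{X_{F_h}^{\omega_Q}}\mu^*\Omega_h=i_{X_{F_h}^{\omega_Q}}\omega_h=0$, using Proposition \ref{prop:mu*(Omega_h)=omega_h Omega_h = h*(omega_h)}(ii) and Proposition \ref{prop:ker omega_h}.

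Next I verify $\eta_1(Y)=1$. Since $\Pi^*_{1,0}\circ\mu=\pi_Q$, we get $\mu^*\eta_1=\pi_Q^*\eta=\eta_0$. Hence $\eta_1(Y)\circ\mu=\eta_0(X_{F_h}^{\omega_Q})=1$ by (\ref{eq:eta_0(X_(F_h)(omega_q))=1}), and surjectivity of $\mu$ gives $\eta_1(Y)=1$.

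Because $\ker\Omega_h$ has rank one, these two conditions determine $R_h$ uniquely, so $Y=R_h$. As a cross-check, the local expressions (\ref{eq:coordinates X_F_h^omega_Q and R}) and (\ref{eq: R_h coordinates}) agree under $\mu(q^i,t,p_i,p)=(q^i,t,p_i)$ from (\ref{eq: local mu and psi}), since $H$ does not depend on $p$.

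The only delicate point is the invariance of $\omega_Q$ under $\psi_s$, which hinges on $\eta$ being closed. Everything else follows formally from the results already established.
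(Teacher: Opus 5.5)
Your proof is correct, and for (ii) it takes a genuinely different route from the paper.

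In (i) your skeleton matches the paper's: a symplectomorphism carries Hamiltonian vector fields to Hamiltonian vector fields, and $F_h\circ\psi_s=F_h+s$ has differential $dF_h$. The only difference is in showing $\psi_s^*\omega_Q=\omega_Q$. The paper checks this in Darboux coordinates adapted to $\Pi$. You observe instead that $\psi_s$ is fibre translation by the closed form $s\eta$, so $\psi_s^*\lambda_Q=\lambda_Q+s\,\pi_Q^*\eta$ (equivalently $\mathcal{L}_R\omega_Q=-d\eta_0=0$). This is coordinate-free and isolates exactly what is needed, namely $d\eta=0$.

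For (ii) the paper gives no intrinsic argument. It refers to \cite{GRABOWSKA2004398,IGNACIO}, or to a comparison of the local expressions of $X_{F_h}^{\omega_Q}$, $R_h$ and $\mu$, which is exactly your closing cross-check. You instead get projectability from (i) and the principal bundle structure. You then identify the projection $Y$ through the defining conditions of $R_h$, using only facts from Section 2: $\omega_h=\mu^*\Omega_h$, $X_{F_h}^{\omega_Q}\in\ker\omega_h$, $\mu^*\eta_1=\eta_0$ and $\eta_0(X_{F_h}^{\omega_Q})=1$.

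The coordinate check is shorter, given the formulas already written down. Yours is global and structural, and it can even drop the reliance on the cited fact $\ker\Omega_h=\langle R_h\rangle$. Since $\mu$ is a surjective submersion with $\omega_h=\mu^*\Omega_h$, one has $\ker\Omega_h(\mu(\alpha_q))=T_{\alpha_q}\mu\bigl(\ker\omega_h(\alpha_q)\bigr)=\langle Y(\mu(\alpha_q))\rangle$. Moreover $Y\neq 0$ because $\eta_1(Y)=1$. So your argument also proves that $\Omega_h$ has corank one, with kernel spanned by $R_h=Y$.
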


\begin{proof}
    To prove \textit{(i)} we first see that 
    \begin{equation}\label{eq: psi*omegaQ = omegaQ}
        \psi_s^*(\omega_Q) = \omega_Q, \quad \text{for all} \quad s\in \R.
    \end{equation}
    Indeed, if we consider Darboux coordinates adapted to the fibration $\Pi$, (\ref{eq: psi*omegaQ = omegaQ}) follows from (\ref{eq: local mu and psi}) and (\ref{eq: coordinates omega_Q}). From this fact, we have
    \begin{equation}\label{eq: T X_Fh^omegaQ}
        T_{\alpha_q}\psi_s(X_{F_h}^{\omega_Q}(\alpha_q)) = X_{F_h \circ \psi_s}^{\omega_Q}(\psi_s(\alpha_q)), \quad \text{for all} \quad \alpha_q\in T^*_qQ \quad \text{and} \quad s\in \R.
    \end{equation}
    Moreover, using (\ref{eq:action psi}) and (\ref{eq:Fh(beta_q+seta(q))=Fh(beta_q)+s}), we deduce that 
    \begin{equation}\label{eq: Fh o psi_s = Fh +s}
        F_h \circ \psi_s = F_h +s, \quad \text{for all} \quad s \in \R.
    \end{equation}
    Thus, from (\ref{eq: T X_Fh^omegaQ}) and (\ref{eq: Fh o psi_s = Fh +s}), we conclude
    $$T_{\alpha_q}\psi_s(X_{F_h}^{\omega_Q}(\alpha_q)) = X_{F_h}^{\omega_Q}(\psi_s(\alpha_q)), \quad \text{for all} \quad \alpha_q\in T^*_qQ \quad \text{and} \quad s\in \R.$$
    Item \textit{(ii)} is proved in \cite{GRABOWSKA2004398,IGNACIO} (it also follows using (\ref{eq: local mu and psi}), (\ref{eq:coordinates X_F_h^omega_Q and R}) and (\ref{eq: R_h coordinates})).
\end{proof}

On the other hand, using (\ref{eq: local h}), (\ref{eq:coordinates X_F_h^omega_Q and R}) and (\ref{eq: R_h coordinates}), we obtain the following result.

\begin{proposition}\label{prop: R_h h-projectable X_F_h^omega_Q}
    The vector field $R_h$ on $V^*\Pi$ is $h$-related with $X_{F_h}^{\omega_Q}$, that is, 
    $$T_{\beta_q}h(R_h(\beta_q)) = X_{F_h}^{\omega_Q}(h(\beta_q)), \quad \text{for all} \quad  \beta_q \in V_q^*\Pi.$$
\end{proposition}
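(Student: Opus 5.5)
The paper already hints at a local-coordinate proof, but I would rather give an intrinsic argument based on the structure established in the previous propositions, and then check it in coordinates. The plan is to show that $X := X_{F_h}^{\omega_Q}$ is tangent to the image $h(V^*\Pi)$. Once this holds, $X$ restricts to a vector field on the submanifold $h(V^*\Pi)\cong V^*\Pi$, and that restriction is $h$-related to $\mu_*X$. Proposition \ref{prop:X_F_h^omega_Q is mu projectable}(ii) identifies $\mu_*X$ with $R_h$, which finishes the argument.

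\textbf{Step 1: identify the image of $h$.} By (\ref{eq: alpha_q - h o mu = F_h eta}), an element $\alpha_q$ satisfies $F_h(\alpha_q)=0$ exactly when $\alpha_q = h(\mu(\alpha_q))$. Hence
\[
h(V^*\Pi) = F_h^{-1}(0).
\]
Since $R(F_h)=1$, the value $0$ is regular for $F_h$, so $F_h^{-1}(0)$ is an embedded hypersurface.

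\textbf{Step 2: tangency.} We have $X(F_h) = \omega_Q(X,X) = 0$, by antisymmetry of $\omega_Q$. So $X$ is tangent to every level set of $F_h$, in particular to $h(V^*\Pi)$.

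\textbf{Step 3: conclusion.} Fix $\beta_q\in V^*_q\Pi$. By Step 2 there is a unique $v\in T_{\beta_q}V^*\Pi$ with $T_{\beta_q}h(v) = X(h(\beta_q))$; uniqueness holds because $h$ is an embedding. Apply $T\mu$ and use $\mu\circ h = \mathrm{id}$:
\[
v = T_{h(\beta_q)}\mu\bigl(X(h(\beta_q))\bigr) = R_h(\mu(h(\beta_q))) = R_h(\beta_q),
\]
where the middle equality is Proposition \ref{prop:X_F_h^omega_Q is mu projectable}(ii). Therefore $T_{\beta_q}h(R_h(\beta_q)) = X(h(\beta_q))$, which is the claim.

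\textbf{Coordinate check.} Locally $h(q^i,t,p_i) = (q^i,t,p_i,-H)$, so
\[
Th(R_h) = \frac{\partial}{\partial t} + \frac{\partial H}{\partial p_i}\frac{\partial}{\partial q^i} - \frac{\partial H}{\partial q^i}\frac{\partial}{\partial p_i} - R_h(H)\frac{\partial}{\partial p}.
\]
The coefficient of $\partial/\partial p$ is $-R_h(H) = -\partial H/\partial t - \{H,H\} = -\partial H/\partial t$. This agrees with (\ref{eq:coordinates X_F_h^omega_Q and R}).

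The only point that needs care is the tangency in Step 2. It reduces to the identification $h(V^*\Pi) = F_h^{-1}(0)$ from Step 1, which depends on $h$ being a section; everything after that is formal.
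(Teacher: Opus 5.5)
Your proof is correct, but it takes a different route from the paper. The paper proves the statement by direct computation in Darboux coordinates adapted to $\Pi$. It pushes the local expression (\ref{eq: R_h coordinates}) of $R_h$ forward by the local form (\ref{eq: local h}) of $h$ and compares the result with (\ref{eq:coordinates X_F_h^omega_Q and R}). That is exactly your closing coordinate check, where the only nontrivial component is $-R_h(H)=-\partial H/\partial t$ along $\partial/\partial p$.

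Your main argument instead explains \emph{why} the identity holds. The image of $h$ is the regular hypersurface $F_h^{-1}(0)$ (cf. (\ref{eq: F_h o h = 0})). This hypersurface is invariant under $X_{F_h}^{\omega_Q}$ because $F_h$ is a first integral of its own Hamiltonian vector field. Then $h$-relatedness is forced by the $\mu$-relatedness of Proposition~\ref{prop:X_F_h^omega_Q is mu projectable}(ii), together with $\mu\circ h=\mathrm{id}$. An equivalent compact form: $X_{F_h}^{\omega_Q}(h(\beta_q))-T_{\beta_q}h(R_h(\beta_q))$ lies in $\ker T\mu=\langle R\rangle$ and is annihilated by $dF_h$, while $dF_h(R)=1$.

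What your approach buys is a coordinate-free argument that makes the proposition a formal corollary of Proposition~\ref{prop:X_F_h^omega_Q is mu projectable}(ii). It is, however, only as intrinsic as that item, which the paper itself obtains by citation or by the same local formulas. The coordinate computation, by contrast, is self-contained and checks both relations at once.

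One small precision: you only need the inclusion $h(V^*\Pi)\subseteq F_h^{-1}(0)$, i.e. $F_h\circ h=0$, plus the dimension count $\dim V^*\Pi=\dim T^*Q-1$. The inclusion uses $\eta(q)\neq 0$, which holds because $\Pi$ is a submersion. The reverse inclusion in your Step 1 is true but not needed.
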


The following diagram summarizes the dynamics and the geometry of this system

$$\xymatrix{\left(T^*Q,\,\omega_h,\, X_{F_h}^{\omega_Q}\right)  \ar[dd]^{\mu} \\
  \\ \left(V^*\Pi,\,\Omega_h, \, R_h\right) \ar@/^3pc/[uu]^{h} 
}$$

To conclude this section, we present an example (see \cite{GUTIERREZSAGREDO}), which illustrates the theoretical results developed so far.

\begin{example}[$N$-dimensional harmonic oscillator seen by an observer that moves with a constant velocity]\label{ex:harmonic oscillator}
    Let us consider a classical particle of mass $m$ moving in a harmonic oscillator potential with frequency $\Omega$ in $N$ dimensions. The potential is centered at the origin, $\mathbf{0} = (0, \ldots, 0) \in \mathbb{R}^N$. Suppose that the system is observed from an inertial frame moving at a constant velocity $\mathbf{v} = (v_1, \ldots, v_N)$. Without loss of generality, we may choose $\mathbf{v} = (v, 0, \ldots, 0)$. 
    
    Under this assumption, the total space $Q$ of the configuration bundle is $\R^{N+1}$, the bundle projection is $\Pi(q^1, \ldots, q^N,t)= t$, the spaces $V^*\Pi$ and $T^*Q$ are $(\R^N \times \R) \times \R^N$ and $(\R^N \times \R) \times (\R^N \times \R)$ with coordinates $((q^1, \ldots, q^N,t),(p_1, \ldots, p_N))$ and $((q^1, \ldots, q^N,t),(p_1, \ldots, p_N,p))$, respectively, and the principal $\R$-bundle projection $\mu: T^*Q \to V^*\Pi$ is
    \begin{equation}\label{ex: mu}
        \mu((q^1, \ldots, q^N,t),(p_1, \ldots, p_N,p)) = ((q^1, \ldots, q^N,t),(p_1, \ldots, p_N)).
    \end{equation}
    
    Now, we consider the Hamiltonian section $h: V^*\Pi \to T^*Q$ induced by the time-dependent Hamiltonian function $H:V^*\Pi \cong (\R^N \times \R) \times \R^N \to \R$ given by
    $$H(q^i,t,p_i) = \frac{1}{2m}p_i^2 + \frac{1}{2}m\Omega^2\left( (q^1+vt)^2 + (q^\alpha)^2\right),$$
    where Latin indices run from $1$ to $N$, and Greek indices from $2$ to $N$. So, using (\ref{eq: local h}), the Hamiltonian section $h: (\R^N \times \R) \times \R^N \to (\R^N \times \R) \times (\R^N \times \R)$ is
    \begin{equation}\label{ex: h}
        h(q^i,t,p_i) = \left(q^i,t,p_i, -\frac{1}{2m}p_i^2 - \frac{1}{2}m\Omega^2\left( (q^1+vt)^2 + (q^\alpha)^2\right)\right),
    \end{equation}
    and, by (\ref{eq: local F_h}), the function $F_h: (\R^N \times \R)  \times (\R^N \times \R) \to \R$ is given by 
    \begin{equation}\label{ex: F_h}
        F_h(q^i,t,p_i, p) = p + H(q^i,t,p_i) = p + \frac{1}{2m}p_i^2 + \frac{1}{2}m\Omega^2\left( (q^1+vt)^2 + (q^\alpha)^2\right).
    \end{equation}
    In this setting, the differential of the Hamiltonian is given by
    $$dH = \frac{1}{m}p_idp_i + m\Omega^2\left( (q^1 + vt)dq^1 + q^\alpha dq^\alpha + (q^1 + vt)vdt\right).$$

    We consider the canonical symplectic structure 
    \begin{equation}\label{ex: omega_Q coordinates}
        \omega_Q = dq^i \wedge dp_i + dt \wedge dp
    \end{equation}
    on $T^*Q \cong (\R^N \times \R) \times (\R^N \times \R)$. According to (\ref{eq: coordinates omega_h and coordinates Omega_h}), the presymplectic structure $\omega_h$ of corank $2$ on $T^*Q$ takes the form

    \begin{equation}\label{ex: omega_h coordinates}
        \omega_h = dq^i \wedge dp_i + \frac{1}{m}p_idp_i \wedge dt +m\Omega^2\left(q^idq^i\wedge dt +vt dq^1 \wedge dt\right).
    \end{equation}

    Similarly, using (\ref{eq: coordinates omega_h and coordinates Omega_h}), the presymplectic structure $\Omega_h$ of corank $1$ on $V^*\Pi \cong (\R^N \times \R) \times \R^N$ is given by
    \begin{equation}\label{ex: Omega_h coordinates}
        \Omega_h = dq^i \wedge dp_i + \frac{1}{m}p_idp_i \wedge dt +m\Omega^2\left(q^idq^i\wedge dt +vt dq^1 \wedge dt\right).
    \end{equation}

    Finally, from equations (\ref{eq:coordinates X_F_h^omega_Q and R}) and (\ref{eq: R_h coordinates}), it follows that the Hamiltonian vector field $X_{F_h}^{\omega_Q}$ and the vector field $R_h$ are given by
    \begin{equation}\label{ex: X_F_h^omega_Q}
        X_{F_h}^{\omega_Q} = \frac{\partial}{\partial t} + \frac{1}{m}p_i \frac{\partial}{\partial q^i}- m\Omega^2\left( (q^1+vt)\left(v\frac{\partial}{\partial p} + \frac{\partial}{\partial p_1}\right) + q^\alpha\frac{\partial}{\partial p_\alpha}\right),
    \end{equation}
    and
    \begin{equation}\label{ex: R_h}
        R_h = \frac{\partial}{\partial t} + \frac{1}{m}p_i \frac{\partial}{\partial q^i}- m\Omega^2\left( (q^1+vt)\frac{\partial}{\partial p_1} + q^\alpha\frac{\partial}{\partial p_\alpha}\right).
    \end{equation}

\end{example}


\section{The actions on $T^*Q$ and $V^*\Pi$}\label{sec: actions}

Given a fibration, $\Pi \colon Q\to \R$, consider an action $\phi: G\times Q \to Q$ of a Lie group $G$ on $Q$. In \cite{IGNACIO}, the fibration $\Pi$ was assumed to be $G$-invariant. Here, we will relax this assumption, by allowing the action not to preserve $\Pi$.

Denote by $T\phi : G\times TQ \to TQ$ the tangent lift of $\phi$ and $T^*\phi: G\times T^*Q \to T^*Q$ the cotangent lift of $\phi$, given by 
\begin{equation}\label{eq: def T*phi}
    (T^*\phi)(g,\alpha_q)(u_{\phi_g(q)}) = (T_g^*\phi(\alpha_q))(u_{\phi_g(q)}) = \alpha_q((T_{\phi_g(q)}\phi_{g^{-1}})(u_{\phi_g(q)})),
\end{equation}
for $g\in G$, $q\in Q$, $\alpha_q\in T_q^*Q$ and $u_{\phi_g(q)}\in T_{\phi_g(q)}Q$.
\begin{proposition}\label{prop: T*phi and psi commute}
    The cotangent lift $T^*\phi : G\times T^*Q \to T^*Q$ and the principal action $\psi: \R\times T^*Q \to T^*Q$, given by (\ref{eq:action psi}), commute if and only if $\phi_g^*\eta = \eta$ for all $g \in G$.
\end{proposition}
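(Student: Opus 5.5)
The plan is to compute both compositions pointwise, using the linearity of the cotangent lift on fibres. Fix $g\in G$, $s\in\R$ and $\alpha_q\in T^*_qQ$. From (\ref{eq: def T*phi}), the map $T^*_g\phi\colon T^*_qQ\to T^*_{\phi_g(q)}Q$ is linear, since it is the transpose of $T_{\phi_g(q)}\phi_{g^{-1}}$. Using (\ref{eq:action psi}) we then obtain
\[
T^*_g\phi(\psi(s,\alpha_q)) = T^*_g\phi(\alpha_q) + s\, T^*_g\phi(\eta(q)),
\qquad
\psi(s, T^*_g\phi(\alpha_q)) = T^*_g\phi(\alpha_q) + s\,\eta(\phi_g(q)).
\]
So the two actions commute precisely when $T^*_g\phi(\eta(q)) = \eta(\phi_g(q))$ for all $g\in G$ and $q\in Q$. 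For the ``only if'' direction, take $s=1$ and any $\alpha_q$, say $\alpha_q=0$.

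Next, I would translate this condition into one on $\phi_g^*\eta$. Evaluating on $u\in T_{\phi_g(q)}Q$ gives $T^*_g\phi(\eta(q))(u)=\eta(q)(T\phi_{g^{-1}}u)=((\phi_{g^{-1}})^*\eta)(\phi_g(q))(u)$. Hence the condition says $(\phi_{g^{-1}})^*\eta=\eta$ for all $g$. Since $g\mapsto g^{-1}$ is a bijection of $G$, this is equivalent to $\phi_g^*\eta=\eta$ for all $g\in G$, which is exactly the statement.

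The only delicate point is keeping track of the $g^{-1}$ in the definition of the cotangent lift, together with the base points: $\eta(q)$ has to be pushed to the fibre over $\phi_g(q)$. The rest is the linearity computation above, so I do not expect a real obstacle.
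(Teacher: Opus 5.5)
Your proposal is correct and follows essentially the same route as the paper: both evaluate $\psi_s(T^*_g\phi(\alpha_q))$ and $T^*_g\phi(\psi_s(\alpha_q))$ at a vector $u\in T_{\phi_g(q)}Q$ and reduce commutativity to $(\phi_{g^{-1}})^*\eta=\eta$. Your handling of the $g^{-1}$ via the bijection $g\mapsto g^{-1}$ (and the choice $s=1$ for the ``only if'' direction) just makes explicit what the paper leaves implicit.
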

\begin{proof}
    Let $\beta_q\in T^*_qQ$ and $u_{\phi_g(q)}\in T_{\phi_g(q)}Q$. Then,
    \[
       \psi_s(T_g^*\phi(\beta_q))(u_{\phi_g(q)})  
         =  T_g^*\phi(\beta_q)(u_{\phi_g(q)}) + s\eta(\phi_g(q))(u_{\phi_g(q)}) 
    \]
    and 
    \[
        T_g^*\phi(\psi_s(\beta_q))(u_{\phi_g(q)})  = T_g^*\phi(\beta_q)(u_{\phi_g(q)}) + s(\phi_{g^{-1}}^*\eta)(\phi_g(q))(u_{\phi_g(q)}). 
    \]

    Thus, $\psi_s \circ T_g^*\phi = T_g^*\phi \circ \psi_s$, for all $s\in \R$ and $g\in G$ if and only if $\phi_g^*\eta = \eta$.
    
\end{proof}

From now on, we will always assume the following hypothesis. 

\begin{assumption}\label{assum 1}
    $\phi_g^*\eta = \eta$ for all $g \in G$, i.e., $\eta$ is $G$-invariant.
\end{assumption}

Note that the condition $\Pi \circ \phi_g = \Pi$, for all $g \in G$, imposed in \cite{IGNACIO}, implies Assumption \ref{assum 1}. However, the converse does not hold, as shown by the following result.

\begin{proposition}
The $1$-form $\eta$ is $G$-invariant if and only if there exists a multiplicative function $m: G \to \R$ such that
$$\Pi\circ \phi_g= \Pi + m(g), \quad \text{for all} \quad g\in G.$$
As a consequence, there exists $c_\phi : \g \to \R$, $\g$ being the Lie algebra of $G$, given by
\begin{equation}\label{eq:cocycle}
        c_\phi = dm(e) \in \g^*,
\end{equation}
which satisfies,
\begin{equation}\label{eq: cocycle equation}
    c_\phi[\xi, \eta]=0, \quad \text{for all} \quad \xi, \eta \in \g,
\end{equation}
that is, $c_\phi \in \g^*$ is a $1$-cocycle of $\g$.
\end{proposition}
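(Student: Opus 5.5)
The plan is to turn the statement $\phi_g^*\eta=\eta$ into a statement about functions and then use connectedness of $Q$. Since $\eta=\Pi^*(dt)=d\Pi$, we have $\phi_g^*\eta = d(\Pi\circ\phi_g)$. Assumption \ref{assum 1} therefore says exactly that $d(\Pi\circ\phi_g-\Pi)=0$ for every $g\in G$. Because $Q$ is connected, the function $\Pi\circ\phi_g-\Pi$ is then a constant, which we call $m(g)$. Conversely, if $\Pi\circ\phi_g=\Pi+m(g)$ with $m(g)$ constant, differentiating gives $\phi_g^*\eta=d\Pi=\eta$. This settles the equivalence, once we check that $m$ is multiplicative.

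Multiplicativity will follow from the action property $\phi_{gh}=\phi_g\circ\phi_h$. Indeed,
\[
\Pi\circ\phi_{gh}=(\Pi\circ\phi_g)\circ\phi_h=\Pi\circ\phi_h+m(g)=\Pi+m(h)+m(g),
\]
so $m(gh)=m(g)+m(h)$. Here ``multiplicative'' means a homomorphism $G\to(\R,+)$, and in particular $m(e)=0$.

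We also need $m$ to be smooth, so that $c_\phi=dm(e)$ makes sense. Fix any $q_0\in Q$. Then $m(g)=\Pi(\phi(g,q_0))-\Pi(q_0)$, which is smooth in $g$ because $\phi$ and $\Pi$ are smooth. This is the only point needing a small argument, and it is mild. An alternative would be to use that a continuous homomorphism of Lie groups is automatically smooth.

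For the cocycle identity, $m:G\to\R$ is a Lie group homomorphism into the abelian group $(\R,+)$. Hence its differential $c_\phi=dm(e):\g\to\R$ is a Lie algebra homomorphism into the abelian Lie algebra $\R$. It follows that $c_\phi[\xi,\eta]=[c_\phi\xi,c_\phi\eta]_{\R}=0$ for all $\xi,\eta\in\g$. If a more hands-on argument is wanted, differentiate the relation $m(\mathrm{Ad}_g\text{-conjugate})$: from $m(ghg^{-1})=m(h)$ we get $c_\phi\circ\mathrm{Ad}_g=c_\phi$, and differentiating at $g=e$ gives $c_\phi\circ\mathrm{ad}_\xi=0$.
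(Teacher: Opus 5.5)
Your proposal is correct and follows the paper's argument: $d(\Pi\circ\phi_g)=d\Pi$ together with connectedness of $Q$ yields the constant $m(g)$, and the action property $\phi_{gh}=\phi_g\circ\phi_h$ gives multiplicativity. You also supply two details the paper leaves implicit. The first is smoothness of $m$, via $m(g)=\Pi(\phi(g,q_0))-\Pi(q_0)$. The second is the cocycle identity, obtained from $m$ being a homomorphism into $(\R,+)$.
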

\begin{proof}
    As $\phi_g^*\eta = \eta$ for all $g \in G$, it follows that $(\Pi \circ \phi_g)^*(dt) = \Pi^*(dt)$ and 
    \begin{equation}\label{eq: T_q(Pi o phi_g)=T_q Pi}
        d(\Pi \circ \phi_g)=d\Pi, \quad \text{for all} \quad g\in G .
    \end{equation}
    Therefore, using that $Q$ is connected, it holds if and only if there exists $m\in \mathcal{C}^\infty(G)$ such that $$\Pi\circ \phi_g = \Pi + m(g), \quad \text{for all} \quad g\in G.$$
    A straightforward computation proves that $m$ is a multiplicative function, i.e., $m(gh) = m(g) + m(h)$, for any $g,h\in G$.
    
\end{proof}

The following lemma, which relates the $1$-cocycle $c_\phi$ to the $1$-form $\eta$, will be useful in the next section.

\begin{lemma}\label{lemma:eta_0(xi_T*Q)=c_phi(xi)}
If $c_\phi$ is the 1-cocycle given by (\ref{eq:cocycle}), then, $$c_\phi(\xi) =\eta_0(\xi_{T^*Q})= \eta(\xi_Q), \quad \text{for all} \quad \xi \in \g,$$
where $\xi_Q$ (resp. $\xi_{T^*Q}$) is the infinitesimal generator of $\phi$ (resp. of $T^*\phi$) for $\xi\in \g$. Consequently, for each fixed $\xi \in \g$,  the smooth function $\eta_0(\xi_{T^*Q})$ on $T^*Q$ is constant.
\end{lemma}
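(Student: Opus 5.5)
The plan is to prove the two equalities separately and then read off the constancy claim. For $\eta(\xi_Q)=c_\phi(\xi)$, fix $q\in Q$ and $\xi\in\g$ and use $\xi_Q(q)=\frac{d}{ds}\big|_{s=0}\phi_{\exp(s\xi)}(q)$. Since $\eta=\Pi^*(dt)=d\Pi$, we have
\[
\eta(q)(\xi_Q(q))=\frac{d}{ds}\Big|_{s=0}\Pi\big(\phi_{\exp(s\xi)}(q)\big)=\frac{d}{ds}\Big|_{s=0}\big(\Pi(q)+m(\exp(s\xi))\big)=dm(e)(\xi)=c_\phi(\xi).
\]
Here we use the relation $\Pi\circ\phi_g=\Pi+m(g)$ from the previous proposition, which holds under Assumption \ref{assum 1}, together with the definition (\ref{eq:cocycle}) of $c_\phi$.

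For $\eta_0(\xi_{T^*Q})=\eta(\xi_Q)$, recall that $\eta_0=\pi_Q^*\eta$ by (\ref{eq:def eta_0}), so $\eta_0(\alpha_q)(\xi_{T^*Q}(\alpha_q))=\eta(q)\big(T_{\alpha_q}\pi_Q(\xi_{T^*Q}(\alpha_q))\big)$. The cotangent lift covers $\phi$, in the sense that $\pi_Q\circ T^*_g\phi=\phi_g\circ\pi_Q$; this is immediate from (\ref{eq: def T*phi}), since $T^*_g\phi(\alpha_q)\in T^*_{\phi_g(q)}Q$. Differentiating along $g=\exp(s\xi)$ at $s=0$ gives $T\pi_Q(\xi_{T^*Q})=\xi_Q\circ\pi_Q$, and substituting this yields $\eta_0(\xi_{T^*Q})(\alpha_q)=\eta(\xi_Q)(q)=c_\phi(\xi)$.

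Constancy then follows at once, because the right-hand side $c_\phi(\xi)$ does not depend on the point $\alpha_q$. There is no real obstacle here; the only point requiring care is the projection identity $T\pi_Q\circ\xi_{T^*Q}=\xi_Q\circ\pi_Q$, which comes from the equivariance of $\pi_Q$.
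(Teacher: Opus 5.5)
Your proposal is correct and follows essentially the same route as the paper: differentiate $\Pi\circ\phi_g=\Pi+m(g)$ at $e$ to obtain $\eta(\xi_Q)=c_\phi(\xi)$, then use $\eta_0=\pi_Q^*\eta$ together with $T\pi_Q\circ\xi_{T^*Q}=\xi_Q\circ\pi_Q$. You also justify that last identity via $\pi_Q\circ T^*_g\phi=\phi_g\circ\pi_Q$, a step the paper uses without comment.
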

\begin{proof}
    Let $\xi \in \g$, since $(\Pi \circ \phi_q)(g)= \Pi(q) + m(g)$ for all $q\in Q$ and $g\in G$, 
    \begin{align*}
        c_\phi(\xi) & = dm(e)(\xi) = d(\Pi \circ \phi_q)(e)(\xi) = \Pi^*(dt)(\xi_Q(q)) = \eta(\xi_Q(q)).
    \end{align*}
In addition, using $T_{\alpha_q}\Pi_Q(\xi_{T^*Q}(\alpha_q)) = \xi_Q(q)$ and \eqref{eq:def eta_0},
\begin{align*}
    \eta_0(\xi_{T^*Q})(\alpha_q) & = ((\Pi \circ \Pi_Q)^*(dt)(\alpha_q))(\xi_{T^*Q}(\alpha_q))  = dt(\Pi(q))(T_q\Pi(T_{\alpha_q}\Pi_Q(\xi_{T^*Q}(\alpha_q)))) 
    \\ & = dt(\Pi(q))(T_q\Pi(\xi_Q(q))) = \Pi^*(dt)(\xi_Q(q)) = \eta(\xi_Q(q)).
\end{align*}

\end{proof}

From (\ref{eq: T_q(Pi o phi_g)=T_q Pi}), it follows that the tangent lift $T\phi$ restricts to $V\Pi$. Thus, we have a linear action $V\phi: G \times V\Pi \to V\Pi$ of $G$ on $V\Pi$. Now, we denote by $V^*\phi: G\times V^*\Pi \to V^*\Pi$ the dual action  of $V\phi$, given by
\begin{equation}\label{eq: def V*phi}
    (V_g^*\phi(\beta_q))(u_{\phi_g(q)})= \beta_q((V_{g^{-1}}\phi)(u_{\phi_g(q)})) = \beta_q((T_{\phi_g(q)}\phi_{g^{-1}})(u_{\phi_g(q)})),
\end{equation}
for $g\in G$, $q\in Q$, $\beta_q\in V^*_q\Pi$ and $u_{\phi_g(q)}\in V_{\phi_g(q)}\Pi$.

\begin{proposition}\label{prop: mu is G-equivariant} 
     The principal $\R$-bundle projection $\mu: T^*Q\to V^*\Pi$ is $G$-equivariant when we consider the cotangent lift of $\phi$ on $T^*Q$ and $V^*\phi$ on $V^*\Pi$.
\end{proposition}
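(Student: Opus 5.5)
We have to show that $\mu\circ T_g^*\phi = V_g^*\phi\circ\mu$ for every $g\in G$. The map $\mu$ is the dual of the inclusion $i\colon V\Pi\hookrightarrow TQ$, so $\mu(\alpha_q)=\alpha_q\circ i_q$ is simply the restriction of $\alpha_q$ to $V_q\Pi$. The plan is therefore to check, pointwise, that the two sides agree as linear forms on $V_{\phi_g(q)}\Pi$.

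Step 1: note where both sides live. Fix $g\in G$, $q\in Q$ and $\alpha_q\in T_q^*Q$.
\begin{itemize}
\item $T_g^*\phi(\alpha_q)$ lies in $T^*_{\phi_g(q)}Q$, so $\mu(T_g^*\phi(\alpha_q))$ lies in $V^*_{\phi_g(q)}\Pi$.
\item $\mu(\alpha_q)$ lies in $V^*_q\Pi$, so $V_g^*\phi(\mu(\alpha_q))$ also lies in $V^*_{\phi_g(q)}\Pi$.
\end{itemize}
Both sides are therefore linear forms on $V_{\phi_g(q)}\Pi$, and it suffices to evaluate them on an arbitrary $u\in V_{\phi_g(q)}\Pi$.

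Step 2: evaluate the left-hand side. By (\ref{eq: def T*phi}),
\[
\mu(T_g^*\phi(\alpha_q))(u)=T_g^*\phi(\alpha_q)(u)=\alpha_q\big((T_{\phi_g(q)}\phi_{g^{-1}})(u)\big).
\]

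Step 3: evaluate the right-hand side. By (\ref{eq: def V*phi}),
\[
V_g^*\phi(\mu(\alpha_q))(u)=\mu(\alpha_q)\big((T_{\phi_g(q)}\phi_{g^{-1}})(u)\big).
\]
For $\mu(\alpha_q)$ to be applicable here, we need $(T_{\phi_g(q)}\phi_{g^{-1}})(u)\in V_q\Pi$. This holds because, by (\ref{eq: T_q(Pi o phi_g)=T_q Pi}), $T\phi_{g^{-1}}$ preserves $V\Pi$; this is exactly why the action $V\phi$ is well defined. Explicitly,
\[
\eta\big(T\phi_{g^{-1}}u\big)=(\phi_{g^{-1}}^*\eta)(u)=\eta(u)=0,
\]
using Assumption \ref{assum 1}. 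Since $\mu(\alpha_q)$ is the restriction of $\alpha_q$ to $V_q\Pi$, the right-hand side equals $\alpha_q\big((T_{\phi_g(q)}\phi_{g^{-1}})(u)\big)$.

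Step 4: compare. Steps 2 and 3 give the same value, which yields $\mu\circ T_g^*\phi = V_g^*\phi\circ\mu$.

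The only point needing care is Step 3, namely that $T\phi_{g^{-1}}$ maps vertical vectors to vertical vectors. That is where Assumption \ref{assum 1} enters; without it, $V^*\phi$ would not even be defined. Everything else follows directly from the definitions. As a cross-check, one could also verify the identity in the adapted local coordinates of (\ref{eq: local mu and psi}).
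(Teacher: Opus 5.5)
Your proof is correct and is essentially the paper's argument. The paper notes that $T\phi$ restricts to $V\Pi$, i.e.\ $i\circ V_g\phi = T_g\phi\circ i$, and dualizes this identity to obtain $V_g^*\phi\circ\mu=\mu\circ T_g^*\phi$; your pointwise evaluation on $u\in V_{\phi_g(q)}\Pi$ simply unwinds that dualization, with your verticality check via Assumption~\ref{assum 1} playing the role of the paper's appeal to (\ref{eq: T_q(Pi o phi_g)=T_q Pi}).
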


\begin{proof}
Since the tangent lift $T\phi$ restricts to $V\Pi$, then
\begin{equation}\label{eq: i o Vphi = Tphi o i}
    i \circ V_g\phi = T_g\phi \circ i, \quad \text{for all} \quad g\in G,
\end{equation}
where $i: V\Pi \hookrightarrow TQ$ denotes the inclusion map. Dualizing (\ref{eq: i o Vphi = Tphi o i}), we obtain that $V_g^*\phi\circ \mu = \mu\circ T_g^*\phi$, for all $g\in G$, that is, $\mu$ is $G$-equivariant.

\end{proof}

As a result of the $G$-equivariance of $\mu$, it follows that 
\begin{equation}\label{eq:mu G-equivariant inf gen}
    T_{\alpha_q}\mu(\xi_{T^*Q}(\alpha_q))= \xi_{V^*\Pi}(\mu(\alpha_q)), \quad \text{for all} \quad \alpha_q\in T^*_qQ,
\end{equation}
where $\xi_{T^*Q}$ and $\xi_{V^*\Pi}$ denote the infinitesimal generators of $\xi \in \g$ associated with $T^*\phi$ and $V^*\phi$, respectively. 

Moreover, we have that 

\begin{proposition}\label{prop:F_h G-invariant}
    The Hamiltonian section $h:V^*\Pi \to T^*Q$ of $\mu$ is $G$-equivariant, i.e.,
    $$T_g^*\phi \circ h = h\circ V_g^*\phi, \quad \text{for all} \quad g\in G,$$
    if and only if the Hamiltonian function $F_h:T^*Q\to\R$ induced by $h$ is $G$-invariant, i.e., $$F_h \circ T^*_g\phi = F_h, \quad \text{for all} \quad g\in G.$$
\end{proposition}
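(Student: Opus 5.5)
The proof will rest on the characterization (\ref{eq: alpha_q - h o mu = F_h eta}) of Proposition \ref{prop:function F_h}, combined with three earlier facts: the $G$-equivariance of $\mu$ (Proposition \ref{prop: mu is G-equivariant}), the invariance of $\eta$ (Assumption \ref{assum 1}), and the translation property (\ref{eq:Fh(beta_q+seta(q))=Fh(beta_q)+s}). The key preliminary observation is how $T^*_g\phi$ acts on multiples of $\eta$. By (\ref{eq: def T*phi}),
\[
T^*_g\phi(\eta(q)) = (\phi_{g^{-1}}^*\eta)(\phi_g(q)) = \eta(\phi_g(q)).
\]
By linearity on fibres, $T^*_g\phi(\alpha_q + s\eta(q)) = T^*_g\phi(\alpha_q) + s\,\eta(\phi_g(q))$. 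This is essentially Proposition \ref{prop: T*phi and psi commute}.

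For the direction ``$h$ equivariant $\Rightarrow$ $F_h$ invariant'', fix $\alpha_q\in T^*_qQ$ and apply $T^*_g\phi$ to (\ref{eq: alpha_q - h o mu = F_h eta}). This gives
\[
T^*_g\phi(\alpha_q) - T^*_g\phi(h(\mu(\alpha_q))) = F_h(\alpha_q)\,\eta(\phi_g(q)).
\]
Equivariance of $h$ and of $\mu$ turns the second term into $h(V^*_g\phi(\mu(\alpha_q))) = h(\mu(T^*_g\phi(\alpha_q)))$. Now write (\ref{eq: alpha_q - h o mu = F_h eta}) at the point $T^*_g\phi(\alpha_q)\in T^*_{\phi_g(q)}Q$. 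Its left-hand side equals $F_h(T^*_g\phi(\alpha_q))\,\eta(\phi_g(q))$. Comparing the two expressions gives
\[
\bigl(F_h(T^*_g\phi(\alpha_q)) - F_h(\alpha_q)\bigr)\,\eta(\phi_g(q)) = 0.
\]
Since $\Pi$ is a fibration, $\eta$ vanishes nowhere, so $F_h\circ T^*_g\phi = F_h$.

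For the converse, assume $F_h$ is $G$-invariant and take $\beta_q\in V^*_q\Pi$. Set $\alpha = T^*_g\phi(h(\beta_q))$. By equivariance of $\mu$, $\mu(\alpha) = V^*_g\phi(\mu(h(\beta_q))) = V^*_g\phi(\beta_q)$. Applying (\ref{eq: alpha_q - h o mu = F_h eta}) at $\alpha$ gives
\[
\alpha - h(V^*_g\phi(\beta_q)) = F_h(\alpha)\,\eta(\phi_g(q)).
\]
By invariance and (\ref{eq: F_h o h = 0}), $F_h(\alpha) = F_h(h(\beta_q)) = 0$, so $T^*_g\phi\circ h = h\circ V^*_g\phi$.

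The only delicate point is the preliminary identity $T^*_g\phi(\eta(q)) = \eta(\phi_g(q))$, which requires Assumption \ref{assum 1}, together with the nonvanishing of $\eta$ needed to cancel it in the first direction. Everything else is bookkeeping with (\ref{eq: alpha_q - h o mu = F_h eta}). One could alternatively argue via the bijection of Proposition \ref{prop:function F_h}: the conjugated section $T^*_g\phi\circ h\circ V^*_{g^{-1}}\phi$ corresponds to $F_h\circ T^*_{g^{-1}}\phi$, and equality of sections is then equivalent to equality of functions. The direct computation above is more transparent, so I would present that.
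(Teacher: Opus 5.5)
Your proof is correct and follows essentially the same route as the paper: you apply $T^*_g\phi$ to \eqref{eq: alpha_q - h o mu = F_h eta}, use $T^*_g\phi(\eta(q))=\eta(\phi_g(q))$ (from Assumption~\ref{assum 1}) together with the equivariance of $\mu$, and compare with \eqref{eq: alpha_q - h o mu = F_h eta} at $T^*_g\phi(\alpha_q)$. The differences are minor. The paper reads off both implications from the single identity $\bigl(F_h(T^*_g\phi(\alpha_q))-F_h(\alpha_q)\bigr)\eta(\phi_g(q)) = T^*_g\phi(h(\mu(\alpha_q)))-h(V^*_g\phi(\mu(\alpha_q)))$, tacitly using that $\mu$ is surjective and that $\eta$ never vanishes. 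You instead handle the converse by evaluating at $h(\beta_q)$ with $F_h\circ h=0$, and you state the nonvanishing of $\eta$ explicitly.
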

\begin{proof}
Let $\beta_q \in T_q^*Q$. Then, from (\ref{eq: alpha_q - h o mu = F_h eta}), Proposition \ref{prop: mu is G-equivariant} and Assumption \ref{assum 1}, it follows that
\begin{equation}\label{eq: proof h o Vphi}
    F_h(T_g^*\phi(\beta_q))(T_g^*\phi(\eta(q))) = F_h(T_g^*\phi(\beta_q))\eta(\phi_g(q)) = T_g^*\phi(\beta_q)- h(V_g^*\phi(\mu(\beta_q))).
\end{equation}

On the other hand, using again (\ref{eq: alpha_q - h o mu = F_h eta}) and Assumption \ref{assum 1}, we deduce that
\begin{equation}\label{eq: proof Tphi o h}
    F_h(\beta_q)(T_g^*\phi(\eta(q))) = T_g^*\phi(\beta_q-h(\mu(\beta_q))) = T_g^*\phi(\beta_q) - T_g^*\phi(h(\mu(\beta_q))).
\end{equation}

So, from (\ref{eq: proof h o Vphi}) and (\ref{eq: proof Tphi o h}), the result follows.

\end{proof}

From now on, we will assume that:

\begin{assumption}\label{assum: h equivariant}
    The Hamiltonian section $h:V^*\Pi \to T^*Q$ of $\mu$ is $G$-equivariant, i.e.,
    $$T_g^*\phi \circ h = h\circ V_g^*\phi, \quad \text{for all} \quad g\in G.$$
\end{assumption}

The following diagram illustrates the situation with these new actions
$$\xymatrix{ & T^*Q \ar@(l,ul)[]^{(G,T^*\phi)} \ar@(u,ur)[]^{(\R, \psi)}\ar[dl]_{\pi_Q} \ar[dr]_\mu \ar[r]^{F_h} & \R \\ Q \ar@(l,ul)[]^{(G,\phi)} \ar[d]_\Pi && V^*\Pi \ar@/_0.8pc/[ul]_h \ar[ll]_{\Pi^*_{1,0}} \ar[dll]^{\Pi^*_1} \ar@(ur,r)[]^{(G,V^*\phi)} \\ \R &&}$$

\begin{example}\label{ex: actions T*phi V*phi}
    Following Example \ref{ex:harmonic oscillator}, we define the Lie group action $\phi: \R \times Q \to Q$ on $Q$ as follows
    $$\phi(s, (q^i, t)) = (q^1-vs,q^\alpha,t+s),$$
    where Latin indices run from $1$ to $N$, while Greek indices run from $2$ to $N$. It is clear that the $1$-form $\eta = \Pi^*(dt)$ is $G$-invariant. Then, the cotangent lift $T^*\phi : \R \times T^*Q \to T^*Q$ is given by
    \begin{equation}\label{ex: action T*phi}
        T^*\phi(s,(q^i,t,p_i,p))=(q^1-vs,q^\alpha,t+s,p_i,p).
    \end{equation}
     Moreover, the action $V^*\phi : \R \times V^*\Pi \to V^*\Pi$ is given by
    \begin{equation}\label{ex: action V*phi}
        V^*\phi(s,(q^i,t,p_i))=(q^1-vs,q^\alpha,t+s,p_i).
    \end{equation}
    Therefore, the fundamental vector fields associated with these actions are
    \begin{equation}\label{ex:generadores infinitesimales}
        \xi_Q = \xi\left(\frac{\partial}{\partial t} - v \frac{\partial}{\partial q^1} \right), \,\,\, \xi_{T^*Q} = \xi\left(\frac{\partial}{\partial t} - v \frac{\partial}{\partial q^1} \right) \,\,\, \text{and} \,\,\,\, \xi_{V^*\Pi} = \xi\left(\frac{\partial}{\partial t} - v \frac{\partial}{\partial q^1} \right),
    \end{equation}
    for all $\xi \in \R$. Finally, from (\ref{ex: F_h}), we deduce that the function $F_h: T^*Q \to \R$ is $\R$-invariant with respect to $T^*\phi$.
\end{example}


\section{The presymplectic momentum maps}\label{sec: momentum map}

The goal of this section is to show that the cotangent lift $T^*\phi$ and the action $V^*\phi$ on the presymplectic manifolds $(T^*Q, \omega_h)$ and $(V^*\Pi, \Omega_h)$, respectively, are endowed with a momentum map. These momentum maps will play a crucial role in the reduction procedure developed in the next section.

\begin{definition}\label{def: presymplectic Hamiltonian action}
    A \textit{presymplectic Hamiltonian action}  on a presymplectic manifold $(M, $ $\omega)$ is an action $\Phi: G\times M \to M$ of a Lie group $G$ on $M$, satisfying the following properties: 
\begin{itemize}
    \item[(i)] The action $\Phi$ is presymplectic, i.e., $\Phi_g^*\omega = \omega$, for all $g\in G$.
    \item[(ii)] For all $x\in M$,
    \begin{equation}\label{eq: ker omega cap T_x(G x) = 0}
        \ker \omega (x) \,\cap\, T_x(G\cdot x) = \{ 0 \}.
    \end{equation}
    \item[(iii)] For every $\xi \in \g$, the infinitesimal generator $\xi_{M}$ of $\Phi$ is a Hamiltonian vector field with respect to $\omega$, i.e., there exists a smooth map $J:M \to \g^*$ such that
    \begin{equation}\label{eq:i_xi_M omega = dJ_xi}
        i_{\xi_M}\omega = dJ_\xi,
    \end{equation}
    where the function $J_\xi: M\to \R$ is given by $J_\xi(x) = J(x)(\xi)$ for all $x\in M$. The function $J$ is called the \textit{momentum map} for $\Phi$ on $(M,\omega)$.
    
\end{itemize}
We call $(M,\omega, \Phi, J)$ a symmetric presymplectic Hamiltonian system.
\end{definition}

\begin{remark}\label{remark: J_c momentum map}
We recall that the cotangent lift $T^*\phi$ of the $G$-action on $Q$ is a Hamiltonian action on the standard symplectic manifold $(T^*Q, \omega_Q)$ (see, for instance, \cite{AM}), with the $Ad^*$-equivariant canonical momentum map $J_c: T^*Q \to \g^*$ given by 
\begin{equation}\label{eq:J_c momentum map}
    J_c(\alpha_q)(\xi) = \alpha_q(\xi_Q(q)), \quad \text{for all} \quad  \alpha_q\in T_q^*Q \quad \text{and} \quad \xi\in \g.
\end{equation}
In particular, $(T^*Q,\omega_Q, T^*\phi, J_c)$ is a symmetric presymplectic Hamiltonian system.
\end{remark}

In what follows, we will describe condition (\ref{eq: ker omega cap T_x(G x) = 0})  in Definition \ref{def: presymplectic Hamiltonian action} in the particular case of the presymplectic manifold $(T^*Q, \omega_h)$ endowed with the cotangent lift $T^*\phi$, that is, we will characterize the equation
    \begin{equation}\label{eq: transversal condition T*Q}
        \ker \omega_h (\alpha_q) \cap T_{\alpha_q}(G\cdot \alpha_q) = \{ 0 \}, \quad \text{for} \quad \alpha_q \in T^*Q.
    \end{equation}

We will assume that the action $\phi : G \times Q \to Q$ is free and proper. Then, from \cite[Example 2.3.6]{ORTEGA}, the cotangent lift $T^*\phi : G \times T^*Q \to T^*Q$ is also free and proper and the space of orbits $T^*Q/G$ is a smooth manifold in such a way that the projection $\pi : T^*Q \to T^*Q /G$ is a surjective submersion.

 Now, suppose that (\ref{eq: transversal condition T*Q}) doesn't hold. Then, by Proposition \ref{prop:ker omega_h}, there exist $\alpha_q \in T^*Q$, $\lambda_1, \lambda_2\in \R$ and $\xi \in \g^*$ such that
    \begin{equation}\label{eq: lambda R + lamba' x}
        \lambda_1 R(\alpha_q) + \lambda_2 X_{F_h}^{\omega_Q}(\alpha_q) = \xi_{T^*Q}(\alpha_q),
    \end{equation}
    Evaluating (\ref{eq: lambda R + lamba' x}) on $F_h$, we obtain
    $$\lambda_1 R(\alpha_q)(F_h) = \xi_{T^*Q}(\alpha_q)(F_h).$$
    Moreover, from Propositions \ref{prop:function F_h} and \ref{prop:F_h G-invariant}, it follows that $\lambda_1=0$. Then, (\ref{eq: lambda R + lamba' x}) implies 
    \begin{equation}\label{eq: x_FhomegaQ in orbit}
        X_{F_h}^{\omega_Q}(\alpha_q) \in T_{\alpha_q}(G \cdot \alpha_q).
    \end{equation}
    In conclusion, we have proved that (\ref{eq: transversal condition T*Q}) is equivalent to
    \begin{equation}\label{eq: x_FhomegaQ not in orbit}
        X_{F_h}^{\omega_Q}(\alpha_q) \notin T_{\alpha_q}(G \cdot \alpha_q).
    \end{equation}

    Note that the points $\alpha_q \in T^*Q$ that satisfy (\ref{eq: x_FhomegaQ in orbit}) are precisely the relative equilibria of the vector field $X_{F_h}^{\omega_Q}$ (see \cite[Definition 4.1.1]{Marsden_1992}). 
    
    On the other hand, the set of points $\mathcal{C} \subset T^*Q$ that satisfy (\ref{eq: x_FhomegaQ in orbit}) is a closed subset of $T^*Q$. Indeed, since both $\omega_Q$ and $F_h$ are $G$-invariant, it follows easily that $X_{F_h}^{\omega_Q}$ is also $G$-invariant. Therefore, $X_{F_h}^{\omega_Q}$ is $\pi$-projectable to a vector field $\mathcal X$ on the quotient manifold $T^*Q/G$, where $\pi:T^*Q\to T^*Q/G$ denotes the quotient projection. If $\alpha_q\in \mathcal{C}$, then $\pi(\alpha_q)$ is a critical point of $\mathcal X$. Since the set $C$ of critical points of a vector field is closed in $T^*Q/G$, it follows that $\mathcal{C}=\pi^{-1}(C)$ is a closed subset of $T^*Q$.

    As a consequence, the set of points $\mathcal{U} = T^*Q \setminus \mathcal C \subset T^*Q$ satisfying (\ref{eq: x_FhomegaQ not in orbit}) is open in $T^*Q$. In addition, using again that $X_{F_h}^{\omega_Q}$ is $G$-invariant,
    we have that 
    \begin{itemize}
        \item $\mathcal{U}$ is $G$-invariant.
        \item $\mathcal{U}$ is $\R$-invariant with respect to $\psi$.
    \end{itemize}
    
    In fact, let $\alpha_q \in \mathcal{U}$ and $s \in \R$, using (\ref{eq: x_FhomegaQ not in orbit}), it follows that
    \begin{equation}\label{eq: T psi X_Fh^omegaQ}
        T_{\alpha_q} \psi_s(X_{F_h}^{\omega_Q}(\alpha_q)) \notin T_{\alpha_q} \psi_s(T_{\alpha_q}(G \cdot \alpha_q)).
    \end{equation}
    On the other hand, from Proposition \ref{prop: T*phi and psi commute}, we deduce that
    $$T_{\alpha_q} \psi_s(\xi_{T^*Q}(\alpha_q)) = \xi_{T^*Q}(\psi_s(\alpha_q)),$$
    where $\xi_{T^*Q}$ is the infinitesimal generator of $\xi \in \g$ associated with $T^*\phi$. Then, 
    \begin{equation}\label{eq: tangent tangent orbits}
        T_{\alpha_q} \psi_s(T_{\alpha_q}(G \cdot \alpha_q)) = T_{\psi_s(\alpha_q)}(G \cdot \psi_s(\alpha_q)).
    \end{equation}
    Finally, using item \textit{(i)} of Proposition \ref{prop:X_F_h^omega_Q is mu projectable}, together with (\ref{eq: tangent tangent orbits}) and (\ref{eq: T psi X_Fh^omegaQ}), we conclude that
    $$X_{F_h}^{\omega_Q}(\psi_s(\alpha_q)) \notin T_{\psi_s(\alpha_q)}(G \cdot \psi_s(\alpha_q)).$$
    Hence, $\psi_s(\alpha_q) \in \mathcal{U}$. 

    \begin{remark}\label{remark: U = T*Q}
        We have shown that condition (\ref{eq: transversal condition T*Q}) holds on the $G$-invariant and $\mathbb{R}$-invariant open subset $\mathcal{U}$ of $T^*Q$. Henceforth, for simplicity, we assume that $\mathcal{U}=T^*Q$. The case $\mathcal{U}\neq T^*Q$ will be treated separately whenever necessary.
    \end{remark}

Now, we are in a position to obtain the desired symmetric presymplectic Hamiltonian system on $T^*Q$. 

\begin{proposition}\label{prop:J momentum map}
$(T^*Q, \omega_h, T^*\phi, J)$ is a symmetric presymplectic Hamiltonian system with $Ad^*$-equivariant momentum map $J:T^*Q \to \g^*$ given by
    \begin{equation}\label{eq:J momentum map}
        J(\alpha_q)(\xi) = \alpha_q(\xi_Q(q)) - F_h(\alpha_q)c_\phi(\xi),
    \end{equation}
    for all $\alpha_q \in T_q^*Q$ and $\xi \in \g$.
\end{proposition}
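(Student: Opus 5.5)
We have to verify the three conditions of Definition \ref{def: presymplectic Hamiltonian action} for $(T^*Q,\omega_h,T^*\phi)$, and then check equivariance of $J$. Condition (ii) is exactly (\ref{eq: transversal condition T*Q}). It holds by the discussion preceding Remark \ref{remark: U = T*Q}, under the standing assumption $\mathcal U = T^*Q$.

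For (i), the cotangent lift preserves the Liouville form, so $(T_g^*\phi)^*\omega_Q=\omega_Q$. By Proposition \ref{prop:F_h G-invariant} and Assumption \ref{assum: h equivariant}, $F_h$ is $G$-invariant. Moreover $\eta_0=\pi_Q^*\eta$ with $\pi_Q\circ T_g^*\phi=\phi_g\circ\pi_Q$, so Assumption \ref{assum 1} gives $(T^*_g\phi)^*\eta_0=\eta_0$. Hence $(T_g^*\phi)^*\omega_h=\omega_h$. (Alternatively one may use $\omega_h=\mu^*\Omega_h$, but the direct route is simpler.)

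For (iii), fix $\xi\in\g$ and compute
\[
i_{\xi_{T^*Q}}\omega_h = i_{\xi_{T^*Q}}\omega_Q + \xi_{T^*Q}(F_h)\,\eta_0 - \eta_0(\xi_{T^*Q})\,dF_h .
\]
We now identify each term.
\begin{itemize}
\item By Remark \ref{remark: J_c momentum map}, $i_{\xi_{T^*Q}}\omega_Q=d(J_c)_\xi$.
\item By $G$-invariance of $F_h$, $\xi_{T^*Q}(F_h)=0$.
\item By Lemma \ref{lemma:eta_0(xi_T*Q)=c_phi(xi)}, $\eta_0(\xi_{T^*Q})=c_\phi(\xi)$ is a constant.
\end{itemize}
Therefore $i_{\xi_{T^*Q}}\omega_h=d\big((J_c)_\xi - c_\phi(\xi)F_h\big)=dJ_\xi$, with $J$ as in (\ref{eq:J momentum map}). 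The constancy of $\eta_0(\xi_{T^*Q})$ is the key point that makes the correction term exact.

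For $Ad^*$-equivariance, write $J=J_c-F_h\,c_\phi$. The canonical momentum map $J_c$ is $Ad^*$-equivariant. Since $F_h$ is $G$-invariant, it remains to show that $c_\phi$ is fixed by the coadjoint action, i.e. $c_\phi(Ad_{g^{-1}}\xi)=c_\phi(\xi)$. This follows because $m$ is a homomorphism to the abelian group $\R$: we have $m(g\exp(s\xi)g^{-1})=m(\exp(s\xi))$, and differentiating at $s=0$ gives $c_\phi\circ Ad_g=c_\phi$. (Infinitesimally this is the cocycle condition (\ref{eq: cocycle equation}).) Then
\[
J(T_g^*\phi(\alpha_q))=Ad^*_{g^{-1}}J_c(\alpha_q)-F_h(\alpha_q)c_\phi=Ad^*_{g^{-1}}J(\alpha_q).
\]
The main subtlety is this invariance of $c_\phi$ under $Ad^*$, together with keeping the sign and convention for $Ad^*$ consistent with the one used for $J_c$.
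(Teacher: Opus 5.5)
Your proposal is correct and follows essentially the same route as the paper. For (i) you use invariance of $\omega_Q$, $F_h$ and $\eta_0$. For (iii) you use the identity $i_{\xi_{T^*Q}}\omega_h = d(J_c)_\xi - c_\phi(\xi)\,dF_h$, which rests on the constancy of $\eta_0(\xi_{T^*Q})=c_\phi(\xi)$. Condition (ii) comes from the standing assumption $\mathcal{U}=T^*Q$, and $Ad^*$-equivariance from the $Ad$-invariance of $c_\phi$. Your derivation of $c_\phi\circ Ad_g=c_\phi$ by differentiating $m(g\exp(s\xi)g^{-1})=m(\exp(s\xi))$ makes explicit a step the paper only attributes to $m$ being multiplicative.
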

\begin{proof}
    First, using Assumption 1, it is deduced that $(T_g^*\phi)^*(\eta_0) = \eta_0$. From this fact, Assumption \ref{assum: h equivariant}, Proposition \ref{prop:F_h G-invariant}
    and that $(T^*\phi)_g^*(\omega_Q)=\omega_Q$, one can prove that $T^*\phi$ is a presymplectic action on $(T^*Q,\omega_h)$, i.e., $$(T^*\phi)_g^*(\omega_h)=\omega_h, \quad \text{for all} \quad g\in G.$$
Now, we show that $J:T^*Q \to \g^*$ is a momentum map for $T^*\phi$ on $(T^*Q,\omega_h)$. Note that, for all $\xi\in \g$, the function $J_\xi: T^*Q\to \R$ is given by
\begin{equation}\label{eq: J_xi}
    J_\xi = (J_c)_\xi-c_\phi(\xi)F_h,
\end{equation}
where $J_c$ is the canonical momentum map (see Remark \ref{remark: J_c momentum map}) and $c_\phi$ is the $1$-cocycle given in (\ref{eq:cocycle}). Then, using Assumption \ref{assum: h equivariant}, Proposition \ref{prop:F_h G-invariant}, Lemma \ref{lemma:eta_0(xi_T*Q)=c_phi(xi)} and Remark \ref{remark: J_c momentum map}, we conclude that
    \begin{equation}\label{eq:i_xi omega_h = dJ_xi}
        \begin{split}
            i_{\xi_{T^*Q}}\omega_h & =  d(J_c)_\xi + dF_h(\xi_{T^*Q})\eta_0 -\eta_0(\xi_{T^*Q})dF_h
        \\ & =  d(J_c)_\xi -c_\phi(\xi)dF_h = dJ_\xi,
        \end{split}
    \end{equation}
    that is, $J$ is a momentum map for $T^*\phi$. Moreover, we will see that $J$ is $Ad^*$-equivariant, i.e., 
    \begin{equation}\label{eq:J equivariant}
        J\circ T_g^*\phi = Ad_{g^{-1}}^* \circ J, \quad \text{for all} \quad g\in G.
    \end{equation}
   
    Indeed, from the fact that $m$ is multiplicative, for all $g\in G$ and $\xi\in\g$, we have that
    \begin{equation}\label{eq:c_phi o Ad = c_phi}
        c_\phi(Ad_{g^{-1}}(\xi)) = c_\phi(\xi),
    \end{equation}
    where $Ad$ is the adjoint action of $G$ on $\g$. Now, taking into account (\ref{eq:c_phi o Ad = c_phi}), that $J_c$ is $Ad^*$-equivariant and that $F_h$ is $G$-invariant, we conclude that
    \begin{align*}
        Ad_{g^{-1}}^*(J(\alpha_q))(\xi) & = J(\alpha_q)((Ad_{g^{-1}})(\xi)) = J_c(\alpha_q)(Ad_{g^{-1}}(\xi)) -F_h(\alpha_q)c_\phi(Ad_{g^{-1}}(\xi))
         \\ &  = J_c(T_g^*\phi(\alpha_q))(\xi) - F_h(T_g^*\phi(\alpha_q))c_\phi(\xi) = J(T_g^*\phi(\alpha_q))(\xi),
    \end{align*}
    for all $\alpha_q \in T^*Q$ and $\xi \in \g$. This proves (\ref{eq:J equivariant}), which ends this proof (see Remark \ref{remark: U = T*Q}).

\end{proof}

In what follows, we present a theorem that endows the presymplectic manifold $(V^*\Pi, \Omega_h)$ of corank $1$ with a presymplectic Hamiltonian action and an $Ad^*$-equivariant momentum map. 

\begin{proposition}\label{prop:J^V momentum map}
    The momentum map $J:T^*Q \to \g^*$ induces an $Ad^*$-equivariant momentum map $J^V: V^*\Pi \to \g^*$ characterized by
    \begin{equation}\label{eq: J=J^V o mu}
        J = J^V \circ \mu,
    \end{equation}
    which makes $(V^*\Pi, \Omega_h, V^*\phi, J^V)$ a symmetric presymplectic Hamiltonian system.
\end{proposition}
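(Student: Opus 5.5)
First I will show that $J$ is constant on the fibres of $\mu$, so that it descends to $V^*\Pi$. The fibres of $\mu$ are the $\psi$-orbits, so it suffices to check $J\circ\psi_s = J$ for all $s\in\R$. By \eqref{eq:action psi}, \eqref{eq:Fh(beta_q+seta(q))=Fh(beta_q)+s} and Lemma \ref{lemma:eta_0(xi_T*Q)=c_phi(xi)},
\[
J(\alpha_q+s\eta(q))(\xi)=\alpha_q(\xi_Q(q))+s\,\eta(\xi_Q(q))-(F_h(\alpha_q)+s)c_\phi(\xi)=J(\alpha_q)(\xi).
\]
Since $\mu$ is a surjective submersion (a principal bundle projection), there is a unique smooth $J^V$ with $J=J^V\circ\mu$. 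Concretely, $J^V=J\circ h$, using $\mu\circ h=\mathrm{id}$. This gives existence, uniqueness and the characterization \eqref{eq: J=J^V o mu}.

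Next I will check equivariance and the presymplectic character of the action. For equivariance, I combine Proposition \ref{prop: mu is G-equivariant} with the $Ad^*$-equivariance of $J$ (Proposition \ref{prop:J momentum map}):
\[
J^V\circ V_g^*\phi\circ\mu=J^V\circ\mu\circ T_g^*\phi=J\circ T^*_g\phi=Ad^*_{g^{-1}}\circ J^V\circ\mu,
\]
and then cancel the surjective map $\mu$. For the invariance of $\Omega_h$, I use Proposition \ref{prop:mu*(Omega_h)=omega_h Omega_h = h*(omega_h)}(i) together with Assumption \ref{assum: h equivariant}:
\[
(V^*_g\phi)^*\Omega_h=(V^*_g\phi)^*h^*\omega_h=h^*(T^*_g\phi)^*\omega_h=h^*\omega_h=\Omega_h.
\]
(Alternatively, one can pull back by $\mu$ and use that $\mu^*$ is injective on forms.)

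Then I will verify the momentum map equation. From \eqref{eq:mu G-equivariant inf gen}, $\xi_{T^*Q}$ is $\mu$-related to $\xi_{V^*\Pi}$. Hence
\[
\mu^*(i_{\xi_{V^*\Pi}}\Omega_h)=i_{\xi_{T^*Q}}\mu^*\Omega_h=i_{\xi_{T^*Q}}\omega_h=dJ_\xi=\mu^*(dJ^V_\xi),
\]
using Proposition \ref{prop:mu*(Omega_h)=omega_h Omega_h = h*(omega_h)}(ii) and \eqref{eq:i_xi omega_h = dJ_xi}. Injectivity of $\mu^*$, since $\mu$ is a submersion, gives $i_{\xi_{V^*\Pi}}\Omega_h=dJ^V_\xi$.

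The remaining and most delicate point is condition (ii) of Definition \ref{def: presymplectic Hamiltonian action}, namely $\ker\Omega_h(\beta)\cap T_\beta(G\cdot\beta)=\{0\}$. By \eqref{eq: ker Omega_h = R_h}, I must exclude $R_h(\beta)=\xi_{V^*\Pi}(\beta)$ for $\lambda\neq0$ after scaling. I plan to lift to $\alpha=h(\beta)$ and suppose $\lambda R_h(\beta)=\xi_{V^*\Pi}(\beta)$. By Proposition \ref{prop:X_F_h^omega_Q is mu projectable}(ii) and \eqref{eq:mu G-equivariant inf gen}, $T_\alpha\mu\big(\lambda X_{F_h}^{\omega_Q}(\alpha)-\xi_{T^*Q}(\alpha)\big)=0$. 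Therefore $\lambda X_{F_h}^{\omega_Q}(\alpha)-\xi_{T^*Q}(\alpha)=aR(\alpha)$ for some $a\in\R$, because $\ker T\mu$ is spanned by $R$. Rearranged, this says $aR(\alpha)-\lambda X_{F_h}^{\omega_Q}(\alpha)\in\ker\omega_h(\alpha)\cap T_\alpha(G\cdot\alpha)$, and that intersection is $\{0\}$ by the standing assumption $\mathcal U=T^*Q$ (Remark \ref{remark: U = T*Q}). Since $R$ and $X_{F_h}^{\omega_Q}$ are independent, $\lambda=0$, and then $\xi_{V^*\Pi}(\beta)=0$. For general $\mathcal U$, the same argument works on the open set $\mu(\mathcal U)$, which is $G$-invariant.
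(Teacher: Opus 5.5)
Your proof is correct and follows the same outline as the paper's. You descend $J$ along $\mu$, obtain $Ad^*$-equivariance and invariance of $\Omega_h$ from the equivariance of $\mu$ and $h$, derive $i_{\xi_{V^*\Pi}}\Omega_h=dJ^V_\xi$ from $\mu^*\Omega_h=\omega_h$ and the $\mu$-relatedness of the generators, and push the transversality condition down through $T\mu$. The differences are minor: you check $J\circ\psi_s=J$ by a direct computation using $\eta(\xi_Q)=c_\phi(\xi)$, where the paper argues infinitesimally via $R(J_\xi)=-(i_R\omega_h)(\xi_{T^*Q})=0$; and your argument for condition (ii), using $\ker T_\alpha\mu=\langle R(\alpha)\rangle\subset\ker\omega_h(\alpha)$, supplies the details behind the paper's one-line claim that the transversality condition ``projects through $T_{\alpha_q}\mu$''.
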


\begin{proof}
    If $\xi \in \g$ then, using that $R\in \text{ker } \omega_h$ (see Proposition \ref{prop:ker omega_h}) and (\ref{eq:i_xi omega_h = dJ_xi}), we have that
    \begin{equation}\label{eq:R(J_xi)=0}
        R(J_\xi) = dJ_\xi(R) = i_{\xi_{T^*Q}}\omega_h(R) = -i_R\omega_h(\xi_{T^*Q})=0.
    \end{equation}
    Therefore, the function $J_\xi: T^*Q\to \R$ is constant on the fibers of $\mu$, and thus,
    \begin{equation}\label{eq:J o psi = J}
        J\circ \psi_s = J, \quad \text{for all} \quad s\in \R.
    \end{equation}
    Consequently, $J:T^*Q \to \g^*$ induces a smooth map $J^V: V^*\Pi \to \g^*$ such that
    $$J = J^V \circ \mu.$$

On the other hand, using the fact that $h$ is $G$-equivariant, that $h^*(\omega_Q)=\Omega_h$ and that the cotangent lift of $\phi$ is a symplectic action,
we prove that $V^*\phi$ is a presymplectic action on $(V^*\Pi,\Omega_h)$, i.e., $$(V_g^*\phi)^*(\Omega_h)=\Omega_h, \quad \text{for all} \quad g\in G.$$

    Now, we will see that $J^V$ is a momentum map for $V^*\phi$ on $(V^*\Pi, \Omega_h)$, i.e., 
    \begin{equation}\label{eq: J^V momentum map}
        i_{\xi_{V^*\Pi}}\Omega_h = dJ^V_\xi, \quad \text{for all} \quad \xi \in\g.
    \end{equation}
    Indeed, taking into account (\ref{eq:mu G-equivariant inf gen}), (\ref{eq:i_xi omega_h = dJ_xi}), (\ref{eq: J=J^V o mu}) and Proposition \ref{prop:mu*(Omega_h)=omega_h Omega_h = h*(omega_h)}, we obtain that
    \begin{align*}
        (i_{\xi_{V^*\Pi}}\Omega_h)(\beta_q)(Z_{\beta_q}) & = \Omega_h(\mu(\alpha_q))(\xi_{V^*\Pi}(\mu(\alpha_q)),T_{\alpha_q}\mu(Y_{\alpha_q}))
        \\ & = \Omega_h(\mu(\alpha_q))(T_{\alpha_q}\mu(\xi_{T^*Q}(\alpha_q)),T_{\alpha_q}\mu(Y_{\alpha_q})) = \omega_h(\alpha_q)(\xi_{T^*Q}(\alpha_q),Y_{\alpha_q}) 
        \\ & = (dJ_\xi)(\alpha_q)(Y_{\alpha_q}) = \mu^*(dJ_\xi^V)(\alpha_q)(Y_{\alpha_q}) = dJ^V_\xi(\beta_q)(Z_{\beta_q}),
    \end{align*}
    for any $\xi \in \g$, $\beta_q = \mu(\alpha_q)\in V^*\Pi$ and $Z_{\beta_q} = T_{\alpha_q}\mu(Y_{\alpha_q})\in T_{\beta_q}(V^*\Pi)$. Therefore, (\ref{eq: J^V momentum map}) holds. In addition, since $J$ is $Ad^*$-equivariant, it follows that $J^V$ is also $Ad^*$-equivariant. Indeed, we have that
    $$Ad^*_{g^{-1}}(J^V(\beta_q)) = Ad^*_{g^{-1}}(J^V(\mu(\alpha_q)) = Ad^*_{g^{-1}}(J(\alpha_q))=J(T_g^*\phi(\alpha_q))$$
    and
    $$J^V(V^*_g\phi(\beta_q)) = J^V(V^*_g\phi(\mu(\alpha_q))) = J^V(\mu(T_g^*\phi(\alpha_q))) = J(T_g^*\phi(\alpha_q)),$$
    for any $\beta_q = \mu(\alpha_q)\in V^*\Pi$ and $g\in G$. We conclude that $Ad^*_{g^{-1}}\circ J^V = J^V\circ V^*_g\phi$, for all $g\in G$.

    Finally, taking into account Remark \ref{remark: U = T*Q}, we have that
    $$\ker \omega_h (\alpha_q) \cap T_{\alpha_q}(G\cdot \alpha_q) = \{ 0 \}, \quad \text{for all
    } \quad \alpha_q \in T^*Q.$$
    Therefore, from item \textit{(ii)} of Proposition \ref{prop:mu*(Omega_h)=omega_h Omega_h = h*(omega_h)}, the previous equality projects through $T_{\alpha_q} \mu$ to
    $$\ker \Omega_h (\beta_q) \cap T_{\beta_q}(G\cdot \beta_q) = \{ 0 \}, \quad \text{for all
    } \quad \beta_q \in V^*\Pi.$$
\end{proof}

\begin{remark}
    We consider the open $\R$-invariant subset $\mathcal{U} \subset T^*Q$ on which condition (\ref{eq: transversal condition T*Q}) holds. Then, $\mathcal{V} = \mu(\mathcal{U})$ is an open subset of $V^*\Pi$ such that $\mu: \mathcal{U} \to \mathcal{V}$ is a principal $\R$-bundle with the associated principal action given by the restriction to $\R \times \mathcal{U}$ of the principal action $\psi$. If the assumption of Remark \ref{remark: U = T*Q} is not satisfied, that is, if $\,\mathcal{U}\neq T^*Q$, then the symmetric presymplectic Hamiltonian systems constructed in Propositions \ref{prop:J momentum map} and \ref{prop:J^V momentum map} are replaced by $(\mathcal{U},\, \iota_{\mathcal{U}}^*(\omega_h),\, {T^*\phi}_{\vert G \times \mathcal{U}},\, J \circ \iota_{\mathcal{U}})$ and $(\mathcal{V},\, \iota_{\mathcal{V}}^*(\Omega_h),\, {V^*\phi}_{\vert G \times \mathcal{V}},\, J^V \circ \iota_{\mathcal{V}})$, respectively, where $\iota_{\mathcal{U}}: \mathcal{U} \hookrightarrow T^*Q$ and $\iota_{\mathcal{V}}: \mathcal{V} \hookrightarrow V^*\Pi$ denote the canonical inclusions.
\end{remark}

Next, we will prove a version of Noether Theorem in our setting for the time-dependent Hamiltonian dynamics $R_h$.

\begin{theorem}\label{theorem: noether}
    If $\xi \in \g$ and $\widehat{\xi}_Q: T^*Q \to \R$ is the fiberwise linear function induced by $\xi_Q\in \mathfrak{X}(Q)$, then:
    
    \begin{itemize}
        \item[(i)] The smooth function 
       $$\widehat{\xi}_Q - c_\phi(\xi) F_h: T^*Q \to \R$$
       is $\mu$-basic and the corresponding function on $V^*\Pi$ is $J_\xi^V$. 

       \item[(ii)] $J_\xi^V$ is a first integral of the time-dependent Hamiltonian dynamics $R_h$.
    \end{itemize}
    
\end{theorem}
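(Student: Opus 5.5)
Part (i) is essentially already contained in the proofs of Proposition \ref{prop:J momentum map} and Proposition \ref{prop:J^V momentum map}, so the plan is to make that explicit. By definition of the fiberwise linear function, $\widehat{\xi}_Q(\alpha_q)=\alpha_q(\xi_Q(q))=(J_c)_\xi(\alpha_q)$. Then (\ref{eq: J_xi}) gives
\[
\widehat{\xi}_Q - c_\phi(\xi)F_h = (J_c)_\xi - c_\phi(\xi)F_h = J_\xi .
\]
Next I invoke (\ref{eq:R(J_xi)=0}): $R(J_\xi)=0$ because $R\in\ker\omega_h$ and $i_{\xi_{T^*Q}}\omega_h=dJ_\xi$. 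Since the fibers of $\mu$ are the connected orbits of $\psi$, the function $J_\xi$ is constant on them, hence $\mu$-basic. By (\ref{eq: J=J^V o mu}) the induced function on $V^*\Pi$ is $J^V_\xi$.

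As a sanity check, independent of the momentum-map identity, I can compute $R(\widehat{\xi}_Q)$ directly. We have $\widehat{\xi}_Q(\alpha_q+s\eta(q))=\widehat{\xi}_Q(\alpha_q)+s\,\eta(\xi_Q)(q)=\widehat{\xi}_Q(\alpha_q)+s\,c_\phi(\xi)$ by Lemma \ref{lemma:eta_0(xi_T*Q)=c_phi(xi)}. On the other hand, $F_h$ increases by $s$ by (\ref{eq:Fh(beta_q+seta(q))=Fh(beta_q)+s}). So the two $s$-terms cancel.

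For (ii), the plan is to lift to $T^*Q$ and use projectability. By Proposition \ref{prop:X_F_h^omega_Q is mu projectable}(ii), $X_{F_h}^{\omega_Q}$ is $\mu$-related to $R_h$. Hence
\[
R_h(J^V_\xi)\circ\mu = X_{F_h}^{\omega_Q}(J^V_\xi\circ\mu) = X_{F_h}^{\omega_Q}(J_\xi).
\]
Since $\mu$ is surjective, it suffices to show $X_{F_h}^{\omega_Q}(J_\xi)=0$. This follows from $X_{F_h}^{\omega_Q}\in\ker\omega_h$ (Proposition \ref{prop:ker omega_h}) together with (\ref{eq:i_xi omega_h = dJ_xi}):
\[
X_{F_h}^{\omega_Q}(J_\xi)=\omega_h(\xi_{T^*Q},X_{F_h}^{\omega_Q})=-(i_{X_{F_h}^{\omega_Q}}\omega_h)(\xi_{T^*Q})=0 .
\]
An alternative route works directly on $V^*\Pi$: use $i_{\xi_{V^*\Pi}}\Omega_h=dJ^V_\xi$ from Proposition \ref{prop:J^V momentum map} and $R_h\in\ker\Omega_h$ from (\ref{eq: ker Omega_h = R_h}), which gives $R_h(J^V_\xi)=\Omega_h(\xi_{V^*\Pi},R_h)=0$ immediately. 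I would present this one-line argument, possibly alongside the lifted one.

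There is no real obstacle. The only point needing care is the $\mu$-basic claim in (i): passing from $R(J_\xi)=0$ to constancy on fibers needs the fibers to be single $\R$-orbits, which holds because $\mu$ is a principal $\R$-bundle. I would also remark that the transversality assumption of Remark \ref{remark: U = T*Q} plays no role here. The identity $i_{\xi_{T^*Q}}\omega_h=dJ_\xi$ in (\ref{eq:i_xi omega_h = dJ_xi}) holds on all of $T^*Q$, so the theorem holds without restricting to $\mathcal{U}$.
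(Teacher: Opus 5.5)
Your proposal is correct. Part (i) is essentially the paper's argument: the paper just chains $\widehat{\xi}_Q - c_\phi(\xi)F_h = (J_c)_\xi - c_\phi(\xi)F_h = J_\xi = J^V_\xi\circ\mu$, and the $\mu$-basic property is inherited from the construction of $J^V$ via $R(J_\xi)=0$. Your extra check using $\eta(\xi_Q)=c_\phi(\xi)$ and $F_h(\alpha_q+s\eta(q))=F_h(\alpha_q)+s$ is a fully elementary alternative that avoids $\omega_h$ altogether.

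For (ii) your route differs from the paper's. The paper also lifts to $T^*Q$ through the $\mu$-relatedness of $X_{F_h}^{\omega_Q}$ and $R_h$. It then drops the $F_h$-term using $dF_h(X_{F_h}^{\omega_Q})=0$, and invokes the classical symplectic Noether theorem on $(T^*Q,\omega_Q)$: $(J_c)_\xi$ is conserved by $X_{F_h}^{\omega_Q}$ because $F_h$ is $G$-invariant. So it treats the two summands of $J_\xi$ separately and stays inside the symplectic manifold $(T^*Q,\omega_Q)$.

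You instead combine the presymplectic momentum-map identity $i_{\xi_{T^*Q}}\omega_h=dJ_\xi$ (or $i_{\xi_{V^*\Pi}}\Omega_h=dJ^V_\xi$) with the fact that the dynamics lies in the kernel. This is the intrinsic presymplectic form of Noether's theorem. It applies verbatim to any symmetric presymplectic Hamiltonian system whose dynamics is a section of the kernel, and your one-line version on $V^*\Pi$ needs neither the lift nor the projectability of $X_{F_h}^{\omega_Q}$.

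The price is a dependence on the momentum-map propositions, which the paper states under the standing assumption $\mathcal{U}=T^*Q$. As you correctly observe, those identities are proved globally, from $i_{\xi_{T^*Q}}\omega_Q=d(J_c)_\xi$, the $G$-invariance of $F_h$ and $\eta_0(\xi_{T^*Q})=c_\phi(\xi)$. So the transversality condition is never used, which matches the remark the paper makes after the theorem. At bottom both proofs rest on the same ingredients, packaged differently.
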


\begin{proof}

    \begin{itemize}
        \item[\textit{(i)}] Using (\ref{eq:J_c momentum map}), (\ref{eq: J_xi}) and (\ref{eq: J=J^V o mu}), it follows that
    \begin{equation}\label{eq: widehat xi_Q - c_phi(xi) F_h = J_xi^V o mu}
        \widehat{\xi}_Q - c_\phi(\xi) F_h = (J_c)_\xi - c_\phi(\xi) F_h = J_\xi = J_\xi^V \circ \mu.
    \end{equation}

    \item[\textit{(ii)}] Let $\beta_q$ be a point in $V_q^*\Pi$, with $q\in Q$, and $\alpha_q \in T_q^*Q$ such that $\mu(\alpha_q) = \beta_q$. Then, using (\ref{eq: widehat xi_Q - c_phi(xi) F_h = J_xi^V o mu}) and item \textit{(ii)} in Proposition \ref{prop:X_F_h^omega_Q is mu projectable}, we have that
    \begin{align*}
        (dJ_\xi^V&(\beta_q))(R_h(\beta_q)) = (dJ_\xi^V(\beta_q))(T_{\alpha_q}\mu(X_{F_h}^{\omega_Q}(\alpha_q))) \\ & = (d(J_c)_\xi(\alpha_q) - c_\phi(\xi) dF_h(\alpha_q))(X_{F_h}^{\omega_Q}(\alpha_q)) = (d(J_c)_\xi(\alpha_q))(X_{F_h}^{\omega_Q}(\alpha_q)).
    \end{align*}

    But, since the Hamiltonian function $F_h\in \mathcal{C}^\infty(T^*Q)$ is $G$-invariant and $J_c: T^*Q \to \g^*$ is a momentum map for the symplectic action $T^*\phi$ on $(T^*Q, \omega_Q)$ then, from Theorem 4.2.2 in \cite{AM}, we deduce that $(J_c)_\xi$ is a first integral of $X_{F_h}^{\omega_Q}$. So, we conclude that 
    $$(dJ_\xi^V(\beta_q))(R_h(\beta_q)) = 0.$$
    \end{itemize}
    
\end{proof}

\begin{remark}
    Note that the condition (\ref{eq: ker omega cap T_x(G x) = 0}) is not used in the proof of Theorem \ref{theorem: noether}. So, $J_\xi^V$ is a first integral of $R_h$ as a vector field defined on the full space $V^*\Pi$ (it is not necessary to restrict to the open subset $\mathcal{V}$ of $V^*\Pi$).
\end{remark}

\medskip
The following example, which continues Example~\ref{ex: actions T*phi V*phi}, presents a physical system that illustrates the theory.

\begin{example}[$N$-dimensional harmonic oscillator seen by an observer that moves with a constant velocity]\label{ex: momentum map}

Following Example \ref{ex:harmonic oscillator}, we know that the action $T^*\phi$, given in (\ref{ex: action T*phi}), is free and the Hamiltonian vector field, given in (\ref{ex: X_F_h^omega_Q}), satisfy $$X_{F_h}^{\omega_Q}(q^i,t,p_i,p)\in \langle 1_{T^*Q}(q^i,t,p_i,p)\rangle = \langle \frac{\partial}{\partial t} - v\frac{\partial}{\partial q^1}\rangle$$ if and only if $(q^i,t,p_i,p) \in \mathcal{C}$, where $1_{T^*Q}$ is the infinitesimal generator of the action $T^*\phi$ associated to $1$, given in (\ref{ex:generadores infinitesimales}), and $\mathcal{C}$ is the closed submanifold of $T^*Q$ of the relative equilibria of $X_{F_h}^{\omega_Q}$ given by
    $$\mathcal{C} = \{ (q^i,t,p_i,p)\in T^*Q \,\mid\, q^1=-vt, \, p_1=-mv, \,q^\alpha = p_\alpha =0, \,t,p \in \R \}.$$

    Now, consider the open subset $\mathcal{U} = T^*Q\setminus \mathcal{C}$ where
    $$\ker \omega_h(\alpha_q) \cap T_{\alpha_q}(G \cdot \alpha_q) = \{ 0 \}, \quad \text{for all} \quad \alpha_q \in \mathcal{U}.$$

    On the other hand, we have that the action $V^*\phi$, given in (\ref{ex: action V*phi}), is free. So, we can consider the open subset $\mathcal{V} = \mu(\mathcal{U})= V^*\Pi\setminus \mathcal{C}^V$, where
    $$\mathcal{C}^V = \{ (q^i,t,p_i)\in V^*\Pi \,\mid\, q^1=-vt, \, p_1=-mv, \,q^\alpha = p_\alpha =0, \,t \in \R \},$$
    and $\mu : \mathcal{U} \to \mathcal{V}$ is the corresponding $\R$-principal bundle. 
 
Moreover, using (\ref{ex: F_h}), (\ref{ex:generadores infinitesimales}), (\ref{eq:J momentum map}) and (\ref{eq: J=J^V o mu}), we have that the momentum maps $J: T^*Q \to \R$ and $J^V: V^*\Pi \to \R$ are given by
\begin{equation}\label{ex:J}
    J(q^i,t,p_i,p) = \frac{m v^2}{2} - \frac{1}{2m}\left((p_1+mv)^2 + p_\alpha^2 \right) - \frac{1}{2}m\Omega^2\left((q^1+vt)^2+(q^\alpha)^2\right),
\end{equation}
and
\begin{equation}\label{ex: J^V}
    J^V(q^i,t,p_i) = \frac{m v^2}{2} - \frac{1}{2m}\left((p_1+mv)^2 + p_\alpha^2 \right) - \frac{1}{2}m\Omega^2\left((q^1+vt)^2+(q^\alpha)^2\right).
\end{equation}

As we know (see Theorem \ref{theorem: noether}), we have that $J^V$ is a first integral of the time-dependent Hamiltonian dynamics $R_h$. In fact, this may be proved directly using (\ref{ex: R_h}) and (\ref{ex: J^V}). Note that $J^V$ is a time-dependent first integral of $R_h$. We remark that when one may apply the reduction process proposed in \cite{ALBERT1989627}, the resulting first integrals cannot depend explicitly on time (see \cite{GUTIERREZSAGREDO}). This is an advantage of our method.

Finally, as a consequence of the results in this section, we have the presymplectic Hamiltonian systems of corank $2$ and $1$, 
$$(\mathcal{U},\, \iota_{\mathcal{U}}^*(\omega_h),\, {T^*\phi}_{\vert G \times \mathcal{U}},\, J \circ \iota_{\mathcal{U}}) \quad \text{and} \quad (\mathcal{V},\, \iota_{\mathcal{V}}^*(\Omega_h),\, {V^*\phi}_{\vert G \times \mathcal{V}},\, J^V \circ \iota_{\mathcal{V}}),$$ respectively, where $\iota_{\mathcal{U}}: \mathcal{U} \hookrightarrow T^*Q$ and $\iota_{\mathcal{V}}: \mathcal{V} \hookrightarrow V^*\Pi$ denote the canonical inclusions.

\end{example}


\section{Reduction process}\label{sec: reduction}

From the previous sections, we know that the spaces $(T^*Q, \omega_h)$ and $(V^*\Pi, \Omega_h)$ are presymplectic manifolds of corank $2$ and $1$ and the total and base space of a principal $\R$-bundle, respectively. The goal of this section is to apply presymplectic reduction theories to these spaces in order to obtain a reduced principal $\R$-bundle with total space a presymplectic manifold of corank $2$ and with base space a presymplectic manifold of corank $1$.

\begin{remark}\label{remark: T*phi free and proper}
    From \cite[Example 2.3.6]{ORTEGA}, we know that if the action $\phi: G \times Q \to Q$ is free and proper, so is the cotangent lift $T^*\phi: G \times T^*Q \to T^*Q$. Analogously, one may prove that the action $V^*\phi: G \times V^*\Pi \to V^*\Pi$ is also free and proper.  
\end{remark}

To develop the reduction process, we need a regular value $\nu \in \mathfrak{g}^*$ for each momentum map. This is a consequence of the freeness of the respective actions. Following the ideas of \cite[Proposition 4.7]{GUTIERREZSAGREDO} and (\ref{eq: ker omega cap T_x(G x) = 0}), we obtain the following result.

\begin{proposition}\label{prop: regular value}
    Let $\phi: G \times M \to M$ be a presymplectic Hamiltonian action of a Lie group $G$ on a presymplectic manifold $(M, \omega)$ with momentum map $J: M \to \g^*$. Suppose that $\phi$ is free. Then, $J$ is a submersion and, therefore, for all $\nu \in \g^*$, $J^{-1}(\nu)$ is a regular submanifold of dimension $\dim M - \dim G$.
\end{proposition}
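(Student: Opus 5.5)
To show that $J$ is a submersion, I will fix $x\in M$ and prove that $T_xJ\colon T_xM\to\g^*$ is onto. The standard symplectic argument identifies the annihilator of the image of $T_xJ$ with the stabilizer algebra of $x$; here the kernel of $\omega$ has to be handled as well, and condition (\ref{eq: ker omega cap T_x(G x) = 0}) is exactly what makes this work.

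Since $\g^*$ is finite dimensional, surjectivity of $T_xJ$ is equivalent to the vanishing of the annihilator of its image in $\g$. So suppose $\xi\in\g$ satisfies $\langle T_xJ(v),\xi\rangle=0$ for all $v\in T_xM$. This means $dJ_\xi(x)=0$. By the momentum map equation (\ref{eq:i_xi_M omega = dJ_xi}), $(i_{\xi_M}\omega)(x)=dJ_\xi(x)=0$, so $\xi_M(x)\in\ker\omega(x)$. Also $\xi_M(x)\in T_x(G\cdot x)$ by definition of the infinitesimal generator. Condition (ii) of Definition \ref{def: presymplectic Hamiltonian action} then forces $\xi_M(x)=0$. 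Because the action is free, the map $\g\to T_x(G\cdot x)$, $\xi\mapsto\xi_M(x)$, is injective, since its kernel is the Lie algebra of the trivial isotropy group $G_x$. Hence $\xi=0$, so $T_xJ$ is surjective and $J$ is a submersion.

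The remaining statements follow from the regular value theorem. Every $\nu\in\g^*$ is a regular value, so $J^{-1}(\nu)$, when nonempty, is an embedded submanifold of codimension $\dim\g^*=\dim G$, that is, of dimension $\dim M-\dim G$.

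The only step that needs care is the injectivity of $\xi\mapsto\xi_M(x)$. It uses freeness only in its infinitesimal form: if $\xi_M(x)=0$, then $\exp(t\xi)$ fixes $x$ for all $t$, so $\exp(t\xi)=e$ for all $t$, and therefore $\xi=0$. The rest of the argument is linear algebra.
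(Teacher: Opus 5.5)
Your proof is correct and is essentially the paper's argument. The paper phrases it as injectivity of the dual map $T_x^*J\colon\g\to T_x^*M$ rather than as vanishing of the annihilator of the image of $T_xJ$, but these are the same statement, and the chain $dJ_\xi(x)=0\Rightarrow\xi_M(x)\in\ker\omega(x)\cap T_x(G\cdot x)=\{0\}\Rightarrow\xi=0$ (via infinitesimal freeness) is identical.
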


\begin{proof}
    We want to see that the momentum map $J: M \to \g^*$ is a submersion. 
    This is equivalent to proving that the map $T_x^*J: T_{J(x)}^*\g^* \simeq \g \to T_x^*M$ is injective for all $x\in M$. In fact, if $\xi \in \ker T_x^*J$, using (\ref{eq:i_xi_M omega = dJ_xi}), it follows that
    \[0 = T_x^*J(\xi)= dJ_\xi(x) = (i_{\xi_M}\omega)(x).\]
    Thus, $\xi_M(x) \in \ker \omega(x)$. In addition, from (\ref{eq: ker omega cap T_x(G x) = 0}) and the fact that $\xi_M(x) \in T_x(G\cdot x)$, we conclude that $\xi_M(x) =0$. Since $\phi$ is free, it is infinitesimally free (i.e., if $\xi_M(x) = 0$ then $\xi=0$), thus we have $\xi =0$. Therefore, $\ker T_x^*J = \{0\}$.
    
\end{proof}

\subsection{Presymplectic reduction}

In this subsection, we will apply the presymplectic reduction theory given in \cite{Echevarria} to reduce the presymplectic manifolds $(T^*Q, \omega_h)$ and $(V^*\Pi, \Omega_h)$ to obtain new reduced presymplectic manifolds.

\subsubsection{Presymplectic reduction of corank $2$}
In the first case, we will reduce the presymplectic manifold $(T^*Q, \omega_h)$. We use the presymplectic reduction theory given in \cite{Echevarria} and we can apply the following theorem.

\begin{theorem}\cite{Echevarria}\label{theorem: presymplectic reduction}
    Let $\Phi: G \times M \to M$ be a presymplectic Hamiltonian action of a Lie group $G$ on the presymplectic manifold $(M,\omega)$ with $Ad^*$-equivariant momentum map $J:M \to \g^*$. Moreover, suppose that $\Phi$ is infinitesimally free and proper. Then, if $\nu \in \g^*$, $\nu$ is a regular value of the momentum map $J$, $J^{-1}(\nu)$ is a submanifold of $M$ on which acts the isotropy subgroup $G_\nu$ (with respect to the coadjoint action of $G$ on $\g^*$) and the orbit space $J^{-1}(\nu)/G_\nu$ is a smooth manifold. In addition, if $\pi_\nu: J^{-1}(\nu) \to J^{-1}(\nu)/G_\nu$ denotes the canonical projection, there is a presymplectic closed $2$-form $\omega_\nu$ on $J^{-1}(\nu)/G_\nu$ such that 
    $$\pi_\nu^*(\omega_\nu)=\iota_\nu^*\omega$$
    where $\iota_\nu : J^{-1}(\nu) \hookrightarrow M$ is the inclusion map.
\end{theorem}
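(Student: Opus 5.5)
The plan is the standard Marsden--Weinstein scheme, adapted to the presymplectic setting, with condition (ii) of Definition \ref{def: presymplectic Hamiltonian action} playing the role of nondegeneracy. First, submanifold structure: by Proposition \ref{prop: regular value}, whose proof only uses infinitesimal freeness together with (\ref{eq: ker omega cap T_x(G x) = 0}), $J$ is a submersion. Hence every $\nu$ is a regular value and $J^{-1}(\nu)$ is an embedded submanifold with $T_x J^{-1}(\nu)=\ker T_xJ$.

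Second, the $G_\nu$-action. By $Ad^*$-equivariance, $J(\Phi_g(x))=Ad^*_{g^{-1}}\nu=\nu$ for $g\in G_\nu$, so $G_\nu$ (a closed subgroup) preserves $J^{-1}(\nu)$. The restricted action is proper, since $G_\nu$ is closed and $J^{-1}(\nu)$ is closed. It is infinitesimally free, hence locally free. To get a smooth quotient $J^{-1}(\nu)/G_\nu$ I will need freeness; I would assume it, as in the applications where $\Phi$ is free. Under that assumption the quotient manifold theorem gives a manifold with $\pi_\nu$ a surjective submersion. For a merely locally free action I would instead expect an orbifold, so I read the statement under freeness.

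Third, the reduced form, which is where the substance lies. Define $\omega_\nu$ by $\omega_\nu(\pi_\nu(x))(T\pi_\nu u,T\pi_\nu v)=\omega(x)(u,v)$ for $u,v\in T_xJ^{-1}(\nu)$. Well-definedness requires two things. The first is $G_\nu$-invariance of $\iota_\nu^*\omega$, which is immediate from $\Phi_g^*\omega=\omega$. The second is that $\xi_M(x)$, for $\xi\in\g_\nu$, is $\omega$-orthogonal to $T_xJ^{-1}(\nu)$. This follows because $\omega(\xi_M,v)=dJ_\xi(v)=0$ when $v\in\ker T_xJ$. The kernel of $T\pi_\nu$ is exactly $T_x(G_\nu\cdot x)=\{\xi_M(x):\xi\in\g_\nu\}$, so $\iota_\nu^*\omega$ is basic and descends to $\omega_\nu$. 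Smoothness follows from local sections of $\pi_\nu$. Closedness follows from $\pi_\nu^*d\omega_\nu=d\iota_\nu^*\omega=0$ and injectivity of $\pi_\nu^*$.

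The main obstacle is showing that $\omega_\nu$ has constant rank, so that it is genuinely presymplectic. The approach is to compute $\ker(\iota_\nu^*\omega)(x)=T_xJ^{-1}(\nu)\cap (T_xJ^{-1}(\nu))^{\omega}$, taking the $\omega$-orthogonal inside $T_xM$. Set $V=T_xM$. Then $T_xJ^{-1}(\nu)=(\g\cdot x)^{\omega}$, and $(\g\cdot x)^{\omega\omega}=\g\cdot x+\ker\omega(x)$, a direct sum by (ii). Hence the kernel is $(\g\cdot x)^\omega\cap(\g\cdot x\oplus\ker\omega(x))$.

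An element $\xi_M+k$ of $\g\cdot x\oplus\ker\omega(x)$ lies in $(\g\cdot x)^\omega$ iff $\omega(\xi_M,\zeta_M)=0$ for all $\zeta$. By equivariance $\omega(\xi_M,\zeta_M)=\pm\langle\nu,[\xi,\zeta]\rangle$, so this holds iff $\xi\in\g_\nu$. Thus
\[
\ker(\iota_\nu^*\omega)(x)=\g_\nu\cdot x\oplus \ker\omega(x),
\]
which also shows $\ker\omega(x)\subset T_xJ^{-1}(\nu)$. Its dimension is $\dim G_\nu+\operatorname{corank}\omega$, which is constant. Quotienting by $\g_\nu\cdot x=\ker T\pi_\nu$ gives $\ker\omega_\nu\cong\ker\omega(x)$, via $T\pi_\nu$ injective on it by (ii). So $\omega_\nu$ has constant rank and the same corank as $\omega$, completing the proof.
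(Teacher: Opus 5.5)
Your argument is correct. The paper does not prove this statement itself; it is quoted from \cite{Echevarria}. Your constant-rank computation is, however, the same one the paper carries out for $\omega_h$ in the proof of Theorem \ref{theorem: reduced presymplectic structure corank 2}: it rewrites $\ker(\iota_\nu^*\omega)(x)$ via (\ref{eq: ortho-com-ortho-com}) and Lemma \ref{lemma MARSDEN}, and your identity $\omega(\xi_M,\zeta_M)=\pm\langle\nu,[\xi,\zeta]\rangle$ plays the role of Lemma \ref{lemma MARSDEN}(i). Your general conclusion $\ker\omega_\nu(\pi_\nu(x))=T_x\pi_\nu(\ker\omega(x))$ is slightly more than the statement records, and it gives Theorem \ref{theorem: reduced presymplectic structure corank 2} directly. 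Your insistence on freeness rather than mere infinitesimal freeness is also justified: with only a locally free proper action the quotient is in general an orbifold. The paper only ever applies the result to free actions (Remark \ref{remark: infinitesimally free}, Corollary \ref{corollary: reduction corank 2}).
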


\begin{remark}\label{remark: infinitesimally free}
    If the action $\phi$ is free then it is infinitesimally free and the previous theorem works. On the other hand, instead of requiring the action $T^*\phi$ to be proper, it is sufficient to assume that the quotient space $J^{-1}(\nu)/G_\nu$ is a smooth manifold and that the canonical projection $\pi_\nu: J^{-1}(\nu) \to J^{-1}(\nu)/G_\nu$ is a submersion.
\end{remark}

Now, we can reduce the presymplectic manifold $(T^*Q, \omega_h)$. The necessary ingredients for the reduction are:
\begin{itemize}
    \item The cotangent bundle $T^*Q$ with the presymplectic structure $\omega_h$, given by (see \eqref{eq:def omega_h})
    $$\omega_h = \omega_Q + dF_h \wedge \eta_0,$$
    with $$\ker\omega_h = \langle R, X_{F_h}^{\omega_Q}\rangle,$$
    where $X_{F_h}^{\omega_Q}\in \mathfrak{X}(T^*Q)$ is the Hamiltonian vector field of $F_h: T^*Q \to \R$ with respect to $\omega_Q$ and $R$ denotes the infinitesimal generator of the action $\psi$ associated to the principal bundle $\mu: T^*Q \to V^*\Pi$ (see Proposition \ref{prop:ker omega_h}). Note that $h: V^*\Pi \to T^*Q$ is a Hamiltonian section satisfying Assumption \ref{assum: h equivariant} and that $F_h$ is the corresponding $G$-invariant Hamiltonian function.
    \item The cotangent lift action $T^*\phi: G \times T^*Q \to T^*Q$ given by (\ref{eq: def T*phi}), where $\phi : G \times Q \to Q$ is a free action.
    \item The $Ad^*$-equivariant momentum map $J:T^*Q \to \g^*$, given by (\ref{eq:J momentum map})  
    $$J(\alpha_q)(\xi) = \alpha_q(\xi_Q(q)) - F_h(\alpha_q)c_\phi(\xi), \quad \text{for all} \quad \alpha_q\in T_q^*Q\quad \text{and} \quad \xi\in \g.$$
\end{itemize}

From Remark \ref{remark: T*phi free and proper}, we have that $T^*\phi$ is free and then it is infinitesimally free (see Remark \ref{remark: infinitesimally free}). As a consequence of Theorem \ref{theorem: presymplectic reduction}, for a fixed $\nu \in \g^*$ (which is a regular value of $J$ by Proposition \ref{prop: regular value}), we obtain a presymplectic structure on the quotient space $J^{-1}(\nu)/ G_\nu$, as stated in the following result.

\begin{corollary}\label{corollary: reduction corank 2}
    Let $(T^*Q, \omega_h, T^*\phi, J)$ be the symmetric presymplectic Hamiltonian system given in Proposition \ref{prop:J momentum map}. Moreover, let $\nu\in \g^*$ such that $J^{-1}(\nu) \neq \emptyset$ and suppose that $\phi$ is free and the space of orbits $(T^*Q)_\nu := J^{-1}(\nu)/G_\nu$ is a manifold such that the canonical projection $\pi_\nu: J^{-1}(\nu) \to J^{-1}(\nu)/G_\nu$ is a submersion. Then, there is a presymplectic closed $2$-form $\omega_h^\nu$ on $(T^*Q)_\nu$ such that
    \begin{equation}\label{eq:(pi_nu)*omega_h^nu = (i_nu)*omega_h}
        \pi_\nu^*(\omega_h^\nu)=\iota_\nu^*(\omega_h),
    \end{equation}
    where $\iota_\nu: J^{-1}(\nu) \hookrightarrow T^*Q$ is the canonical inclusion.
\end{corollary}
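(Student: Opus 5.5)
The plan is to check the hypotheses of Theorem \ref{theorem: presymplectic reduction} for the data $(M,\omega,\Phi,J)=(T^*Q,\omega_h,T^*\phi,J)$. For properness we substitute the weaker condition allowed by Remark \ref{remark: infinitesimally free}, which the corollary assumes directly. Once the hypotheses are verified, the theorem gives the closed $2$-form $\omega_h^\nu$ on $(T^*Q)_\nu$ with $\pi_\nu^*(\omega_h^\nu)=\iota_\nu^*(\omega_h)$. It also gives that $\omega_h^\nu$ has constant rank.

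\textbf{Step 1: the action is a presymplectic Hamiltonian action.} By Proposition \ref{prop:J momentum map}, $(T^*Q,\omega_h,T^*\phi,J)$ is a symmetric presymplectic Hamiltonian system with $Ad^*$-equivariant momentum map $J$. This includes items (i) and (iii) of Definition \ref{def: presymplectic Hamiltonian action}. Item (ii), the transversality $\ker\omega_h(\alpha_q)\cap T_{\alpha_q}(G\cdot\alpha_q)=\{0\}$, holds by the standing convention of Remark \ref{remark: U = T*Q}. That convention means we work on $\mathcal{U}$, which is open and $G$-invariant, so all statements restrict to it.

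\textbf{Step 2: freeness and regularity of $\nu$.} Since $\phi$ is free, $T^*\phi$ is free: if $T^*_g\phi(\alpha_q)=\alpha_q$, projecting by $\pi_Q$ gives $\phi_g(q)=q$, hence $g=e$. A free action is infinitesimally free (Remark \ref{remark: infinitesimally free}). Proposition \ref{prop: regular value} then shows that $J$ is a submersion, so $\nu$ is a regular value. Since $J^{-1}(\nu)\neq\emptyset$, this level set is a submanifold of dimension $\dim T^*Q-\dim G$. By $Ad^*$-equivariance of $J$, the isotropy group $G_\nu$ preserves $J^{-1}(\nu)$ and acts freely on it.

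\textbf{Step 3: descent of $\iota_\nu^*\omega_h$.} The existence of $\omega_h^\nu$ is the standard descent argument, and it is the step I expect to need the most care. First, $\iota_\nu^*\omega_h$ is $G_\nu$-invariant because $T^*\phi$ preserves $\omega_h$. Second, it vanishes on the vertical directions $\xi_{T^*Q}$ for $\xi\in\g_\nu$. Indeed, for $v\in T(J^{-1}(\nu))$ we have $\omega_h(\xi_{T^*Q},v)=dJ_\xi(v)=0$, since $J_\xi$ is constant on $J^{-1}(\nu)$. Since $\pi_\nu$ is a surjective submersion with connected fibres given by $G_\nu$-orbits (or at least orbits generated by $G_\nu$ acting invariantly), a form that is invariant and horizontal is basic. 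This yields a unique $2$-form $\omega_h^\nu$ with $\pi_\nu^*\omega_h^\nu=\iota_\nu^*\omega_h$.

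Closedness of $\omega_h^\nu$ follows from injectivity of $\pi_\nu^*$ together with $d\iota_\nu^*\omega_h=0$. The remaining point is that the rank of $\omega_h^\nu$ is constant. This holds because the kernel of $\iota_\nu^*\omega_h$ at a point $x$ is $T_x(J^{-1}(\nu))\cap \big(T_x J^{-1}(\nu)\big)^{\omega_h}$, and its dimension is constant by the general result of \cite{Echevarria} under the transversality condition of Step 1. Rather than recompute it, I would invoke Theorem \ref{theorem: presymplectic reduction} for this point.
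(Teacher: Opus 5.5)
Your proposal is correct and follows the paper's route. The corollary comes from checking the hypotheses of Theorem~\ref{theorem: presymplectic reduction}: freeness of $T^*\phi$ (hence infinitesimal freeness), regularity of $\nu$ via Proposition~\ref{prop: regular value}, and Remark~\ref{remark: infinitesimally free} in place of properness, after which the theorem is simply invoked. Your extra descent sketch in Step 3 is fine, but connectedness of the fibres is not needed: full $G_\nu$-invariance of $\iota_\nu^*\omega_h$, together with its vanishing on the generators, is already enough to make it basic.
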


In what follows, we will study the dynamics on the reduced presymplectic system and the corank of the reduced presymplectic structures.

\begin{proposition}\label{prop: reduced X_F_h^omega_Q}
    In the hypothesis of Corollary \ref{corollary: reduction corank 2}, the Hamiltonian vector field $X_{F_h}^{\omega_Q}$ restricts to $J^{-1}(\nu)$, and its restriction is $\pi_\nu$-projectable on a vector field $X_\nu$ on $(T^*Q)_\nu$, that is, 
    \begin{equation}\label{eq: X_F_h^omega_Q is pi_nu projectable over widetilde X_nu}
        T_{\alpha_q}\pi_\nu(X_{F_h}^{\omega_Q}(\alpha_q)) = X_\nu(\pi_\nu(\alpha_q)),\quad \text{for all} \quad \alpha_q\in J^{-1}(\nu).
    \end{equation}
\end{proposition}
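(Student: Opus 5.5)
The plan has two parts. First, show that $X_{F_h}^{\omega_Q}$ is tangent to the level set $J^{-1}(\nu)$. Second, show that its restriction is $G_\nu$-invariant, so that it descends along the submersion $\pi_\nu$.

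For tangency, we use that $\nu$ is a regular value (Proposition \ref{prop: regular value}). Then $T_{\alpha_q}J^{-1}(\nu)=\ker T_{\alpha_q}J$, so it suffices to check that $X_{F_h}^{\omega_Q}(J_\xi)=0$ for every $\xi\in\g$. By (\ref{eq: J_xi}) we have $J_\xi=(J_c)_\xi-c_\phi(\xi)F_h$. The first term is killed by $X_{F_h}^{\omega_Q}$, since $(J_c)_\xi$ is a first integral of $X_{F_h}^{\omega_Q}$ (classical Noether, as in the proof of Theorem \ref{theorem: noether}). The second term is killed because $X_{F_h}^{\omega_Q}(F_h)=\omega_Q(X_{F_h}^{\omega_Q},X_{F_h}^{\omega_Q})=0$.

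A shorter alternative is to use $X_{F_h}^{\omega_Q}\in\ker\omega_h$ (Proposition \ref{prop:ker omega_h}) together with (\ref{eq:i_xi omega_h = dJ_xi}):
\[
dJ_\xi(X_{F_h}^{\omega_Q})=\omega_h(\xi_{T^*Q},X_{F_h}^{\omega_Q})=0 .
\]
I would present this version. It follows that $X_{F_h}^{\omega_Q}$ restricts to a vector field $X^\nu$ on $J^{-1}(\nu)$ with $T\iota_\nu\circ X^\nu=X_{F_h}^{\omega_Q}\circ\iota_\nu$.

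For projectability, note first that $X_{F_h}^{\omega_Q}$ is $G$-invariant: $T^*\phi_g$ is a symplectomorphism of $\omega_Q$ and $F_h$ is $G$-invariant (Proposition \ref{prop:F_h G-invariant}), so $T\,T^*\phi_g\circ X_{F_h}^{\omega_Q}=X_{F_h}^{\omega_Q}\circ T^*\phi_g$. This was already observed in Section \ref{sec: momentum map}. For $g\in G_\nu$, the equivariance (\ref{eq:J equivariant}) shows that $T^*\phi_g$ preserves $J^{-1}(\nu)$. Hence the restriction $X^\nu$ is $G_\nu$-invariant.

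Now define
\[
X_\nu(\pi_\nu(\alpha_q)):=T_{\alpha_q}\pi_\nu\bigl(X_{F_h}^{\omega_Q}(\alpha_q)\bigr).
\]
This is well defined on orbits because $\pi_\nu\circ T^*\phi_g=\pi_\nu$ on $J^{-1}(\nu)$ for $g\in G_\nu$. Differentiating this identity and using the invariance gives the same value at $T^*\phi_g(\alpha_q)$.

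The only mildly delicate step is smoothness of $X_\nu$. By hypothesis $\pi_\nu$ is a surjective submersion, so it admits smooth local sections $\sigma$. Locally $X_\nu=T\pi_\nu\circ X^\nu\circ\sigma$, which is smooth, and this yields (\ref{eq: X_F_h^omega_Q is pi_nu projectable over widetilde X_nu}).
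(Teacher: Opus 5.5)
Your proof is correct and follows the paper's route: first show $dJ_\xi\bigl(X_{F_h}^{\omega_Q}\bigr)=0$ for all $\xi\in\g$ to get tangency to $J^{-1}(\nu)$, then use the $G$-invariance of $X_{F_h}^{\omega_Q}$ (from the invariance of $\omega_Q$ and $F_h$) to get $G_\nu$-invariance of the restriction and hence $\pi_\nu$-projectability. The differences are minor. For tangency, the paper computes through $\omega_Q$ and $\xi_{T^*Q}(F_h)=0$, which is your first alternative, while your preferred one-liner uses $X_{F_h}^{\omega_Q}\in\ker\omega_h$ together with $i_{\xi_{T^*Q}}\omega_h=dJ_\xi$; you also spell out the regular-value, well-definedness and smoothness details that the paper leaves implicit.
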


\begin{proof}
    First, we prove that $X_{F_h}^{\omega_Q}$ restricts to $J^{-1}(\nu)$. Indeed, let $\xi\in\g$, $\alpha_q \in J^{-1}(\nu)$ and $\widehat{\xi}: \g^*\to \R$ be the linear function induced by $\xi$. Taking into account that $F_h$ is $G$-invariant, it follows that
    \begin{align*}
        T_{\alpha_q}J(X_{F_h}^{\omega_Q}(\alpha_q))(\widehat{\xi}) & 
        = dJ_\xi(\alpha_q)(X_{F_h}^{\omega_Q}(\alpha_q))
         = (i_{\xi_{T^*Q}}\omega_Q)(\alpha_q)(X_{F_h}^{\omega_Q}(\alpha_q)) 
        \\ & = -dF_h(\xi_{T^*Q})(\alpha_q) = -\xi_{T^*Q}(F_h)(\alpha_q) = 0.
    \end{align*}
    Therefore, we conclude that $X_{F_h}^{\omega_Q}$ is tangent to $J^{-1}(\nu)$.

    Now, we prove that ${X_{F_h}^{\omega_Q}}_{|J^{-1}(\nu)}$ is $\pi_\nu$-projectable. In fact, using that $\omega_Q$ and $F_h$ are $G$-invariant, it is easy to prove that $X_{F_h}^{\omega_Q}$ is also $G$-invariant. This implies that ${X_{F_h}^{\omega_Q}}_{\vert J^{-1}(\nu)}$ is $G_\nu$-invariant and, therefore, it is $\pi_\nu$-projectable. Thus, there exists a vector field $X_\nu$ on $(T^*Q)_\nu$ such that 
    $$T_{\alpha_q}\pi_\nu(X_{F_h}^{\omega_Q}(\alpha_q)) = X_\nu(\pi_\nu(\alpha_q)),\quad \text{for all} \quad \alpha_q\in J^{-1}(\nu).$$
\end{proof}

Analogously, the infinitesimal generator $R$ of $\psi$ is $\pi_\nu$-projectable on a vector field on $(T^*Q)_\nu$, as will be established in the following result.

\begin{proposition}\label{prop: reduced psi}
    In the hypothesis of Corollary \ref{corollary: reduction corank 2}, the principal action $\psi: \R \times T^*Q \to T^*Q$ restricts to an action of $\R$ on $J^{-1}(\nu)$ and it induces an action $\psi_\nu: \R \times (T^*Q)_\nu \to (T^*Q)_\nu$ of $\R$ on $(T^*Q)_\nu$ characterized by
    \begin{equation}\label{eq:(psi_nu)_s o pi_nu = pi_nu o psi_s}
        (\psi_\nu)_s \circ \pi_\nu = \pi_\nu \circ {\psi_s}_{\vert J^{-1}(\nu)}, \quad \text{for all} \quad s\in \R.
    \end{equation}
    Moreover, the infinitesimal generator $R$ of $\psi$ restricts to $J^{-1}(\nu)$, and its restriction is $\pi_\nu$-projectable on the infinitesimal generator $R_\nu$ of $\psi_\nu$, that is,
    \begin{equation}\label{eq: R is pi_nu projectable over R_nu}
        T_{\alpha_q}\pi_\nu(R(\alpha_q)) = R_\nu(\pi_\nu(\alpha_q)),\quad \text{for all} \quad \alpha_q\in J^{-1}(\nu).
    \end{equation}
\end{proposition}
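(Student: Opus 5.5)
First I will show that $\psi$ preserves $J^{-1}(\nu)$. By (\ref{eq:J o psi = J}), established in the proof of Proposition \ref{prop:J^V momentum map}, we have $J\circ\psi_s = J$ for all $s\in\R$. Hence $\psi_s(J^{-1}(\nu))\subseteq J^{-1}(\nu)$, and applying the same to $\psi_{-s}$ gives equality. Since $J^{-1}(\nu)$ is an embedded submanifold (Proposition \ref{prop: regular value}), the restriction $\R\times J^{-1}(\nu)\to J^{-1}(\nu)$ is smooth and is an action. Differentiating $\psi_s(\alpha_q)$ at $s=0$ then shows that $R(\alpha_q)\in T_{\alpha_q}J^{-1}(\nu)$. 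Alternatively, this follows directly from (\ref{eq:R(J_xi)=0}), i.e.\ $R(J_\xi)=0$ for all $\xi\in\g$.

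Next I will check that the restricted $\R$-action descends through $\pi_\nu$. By Proposition \ref{prop: T*phi and psi commute} and Assumption \ref{assum 1}, $\psi_s\circ T^*_g\phi = T^*_g\phi\circ\psi_s$ for every $g\in G$, in particular for $g\in G_\nu$. Therefore $\psi_s$ maps $G_\nu$-orbits in $J^{-1}(\nu)$ to $G_\nu$-orbits. This lets me define
\[
(\psi_\nu)_s(\pi_\nu(\alpha_q)) := \pi_\nu(\psi_s(\alpha_q)),
\]
which is well defined and gives (\ref{eq:(psi_nu)_s o pi_nu = pi_nu o psi_s}). The action axioms $(\psi_\nu)_0=\mathrm{id}$ and $(\psi_\nu)_{s+s'}=(\psi_\nu)_s\circ(\psi_\nu)_{s'}$ are inherited from those of $\psi$ because $\pi_\nu$ is surjective.

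The step needing care is smoothness of $\psi_\nu:\R\times (T^*Q)_\nu\to (T^*Q)_\nu$. Here I use that $\pi_\nu$ is a surjective submersion (hypothesis of Corollary \ref{corollary: reduction corank 2}). Then $\mathrm{id}_\R\times\pi_\nu:\R\times J^{-1}(\nu)\to\R\times(T^*Q)_\nu$ is also a surjective submersion. The map $\psi_\nu\circ(\mathrm{id}_\R\times\pi_\nu)=\pi_\nu\circ\psi|_{\R\times J^{-1}(\nu)}$ is smooth, so $\psi_\nu$ is smooth by the universal property of surjective submersions, which admit local sections.

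Finally, to get (\ref{eq: R is pi_nu projectable over R_nu}), I will differentiate (\ref{eq:(psi_nu)_s o pi_nu = pi_nu o psi_s}) with respect to $s$ at $s=0$, evaluated at $\alpha_q\in J^{-1}(\nu)$. The left side gives $R_\nu(\pi_\nu(\alpha_q))$, the infinitesimal generator of $\psi_\nu$. The right side gives $T_{\alpha_q}\pi_\nu(R(\alpha_q))$, using the chain rule and the tangency of $R$ to $J^{-1}(\nu)$ established in the first step.
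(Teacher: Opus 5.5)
Your proposal is correct and follows the paper's proof essentially step by step. Both use the invariance $J\circ\psi_s=J$ from (\ref{eq:J o psi = J}), descend to the quotient via the commutation of $\psi$ with $T^*\phi$ (Proposition \ref{prop: T*phi and psi commute}), get tangency of $R$ from (\ref{eq:R(J_xi)=0}), and obtain projectability by differentiating (\ref{eq:(psi_nu)_s o pi_nu = pi_nu o psi_s}) at $s=0$. Where you go further is in spelling out the well-definedness and the smoothness of $\psi_\nu$, via the surjective submersion $\mathrm{id}_\R\times\pi_\nu$, which the paper leaves implicit.
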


\begin{proof}
    From (\ref{eq:J o psi = J}), we see that the principal action $\psi: \R \times T^*Q \to T^*Q$ restricts to an action of $\R$ on $J^{-1}(\nu)$. Consequently, from Proposition \ref{prop: T*phi and psi commute}, $\psi$ induces an action $\psi_\nu : \R \times (T^*Q)_\nu \to (T^*Q)_\nu$ of $\R$ on $(T^*Q)_\nu$ characterized by 
    $$(\psi_\nu)_s \circ \pi_\nu = \pi_\nu \circ {\psi_s}_{\vert J^{-1}(\nu)}, \quad \text{for all} \quad s\in \R.$$
        
    On the other hand, from (\ref{eq:R(J_xi)=0}), it follows that the restriction to $J^{-1}(\nu)$ of $R$ is tangent to $J^{-1}(\nu)$. Moreover, $R_{|J^{-1}(\nu)}$ is the infinitesimal generator of the action $\psi:\R \times J^{-1}(\nu) \to J^{-1}(\nu)$, and, due to (\ref{eq:(psi_nu)_s o pi_nu = pi_nu o psi_s}), it is $\pi_\nu$-projectable over the infinitesimal generator $R_\nu$ of $\psi_\nu$.
\end{proof}

The reduced vector fields $X_\nu$ and $R_\nu$ of Propositions \ref{prop: reduced X_F_h^omega_Q} and \ref{prop: reduced psi}, respectively, generate the kernel of the reduced presymplectic structure $\omega_h^\nu$ on $(T^*Q)_\nu$. To this end, we present the following lemma, whose proof follows from similar arguments to those used in Lemma 4.3.2 of \cite{AM} (see also \cite{MARSDEN}).

\begin{lemma}\label{lemma MARSDEN}
    Let $\phi: G \times M \to M$ be a presymplectic Hamiltonian action of a Lie group $G$ on the presymplectic manifold $(M,\omega)$ with $Ad^*$-equivariant momentum map $J:M \to \g^*$. Let $\nu \in \g^*$ a regular value of $J$ and $x\in J^{-1}(\nu)$, then
    \begin{itemize}
        \item[(i)] $T_x(G_\nu \cdot x) = T_x(G\cdot x) \cap T_x(J^{-1}(\nu))$.
        \item[(ii)] $T_x(J^{-1}(\nu))=(T_x(G\cdot x))^{\perp,\omega}$, where $(T_x(G\cdot x))^{\perp,\omega}$ denotes the $\omega$-orthogonal complement of $T_x(G\cdot x)$.
    \end{itemize}
\end{lemma}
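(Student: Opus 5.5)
We will adapt the symplectic argument of Lemma 4.3.2 in \cite{AM}, checking at each step where nondegeneracy is used and replacing it by condition (\ref{eq: ker omega cap T_x(G x) = 0}) together with the submersion property of Proposition \ref{prop: regular value}. Both items rest on the infinitesimal form of the $Ad^*$-equivariance (\ref{eq:J equivariant}), which we differentiate at $g=e$ to obtain
\[
T_xJ(\xi_M(x)) = -ad_\xi^*(J(x)) = -ad_\xi^*\nu, \quad \text{for } x\in J^{-1}(\nu),\ \xi\in\g .
\]

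We begin with item \textit{(ii)}, since it is purely linear algebra. Since $\nu$ is a regular value, $T_x(J^{-1}(\nu)) = \ker T_xJ$. Moreover, $v\in\ker T_xJ$ holds if and only if $dJ_\xi(x)(v)=0$ for all $\xi\in\g$. By (\ref{eq:i_xi_M omega = dJ_xi}) this is equivalent to $\omega(x)(\xi_M(x),v)=0$ for all $\xi$, that is, $v\in (T_x(G\cdot x))^{\perp,\omega}$, because $T_x(G\cdot x)=\{\xi_M(x)\mid \xi\in\g\}$. Nondegeneracy is never used here, so the symplectic proof transfers verbatim.

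For item \textit{(i)}, the inclusion $\subseteq$ is immediate. The orbit $G_\nu\cdot x$ lies in $J^{-1}(\nu)$ by equivariance, and it is contained in $G\cdot x$. For $\supseteq$, we take $\xi_M(x)\in T_x(J^{-1}(\nu))$. Then $T_xJ(\xi_M(x))=0$, so the displayed formula gives $ad^*_\xi\nu=0$. Hence $\xi\in\g_\nu$, the Lie algebra of $G_\nu$, and therefore $\xi_M(x)\in T_x(G_\nu\cdot x)$.

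The one point needing care is that this argument is pure algebra and that $T_x(G_\nu\cdot x)=\{\xi_M(x)\mid\xi\in\g_\nu\}$. This requires identifying $\g_\nu=\{\xi : ad^*_\xi\nu=0\}$ as the Lie algebra of the closed subgroup $G_\nu$, which is standard. The regularity of $\nu$ is used only to write the tangent space of the level set as $\ker T_xJ$. Condition (\ref{eq: ker omega cap T_x(G x) = 0}) is not actually needed for this lemma; it enters only through Proposition \ref{prop: regular value}, which guarantees regularity. We therefore expect no real obstacle, beyond verifying the sign in the derivative of the coadjoint action, which does not affect the conclusion.
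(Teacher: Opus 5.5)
Your proposal is correct and follows essentially the route the paper intends. The paper gives no written proof and simply defers to the argument of Lemma 4.3.2 in \cite{AM}; you carry that argument out, correctly observing that (ii) uses only $i_{\xi_M}\omega = dJ_\xi$ together with the regularity of $\nu$, that (i) uses only the infinitesimal form of $Ad^*$-equivariance, and that nondegeneracy of $\omega$ is never needed.
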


\begin{remark}
    The definition of the $\omega$-orthogonal complement in Lemma \ref{lemma MARSDEN} is a particular case of the general notion of the $\Phi$-orthogonal complement for a $2$-form $\Phi: V\times V \to \R$ over a vector space $V$ of finite dimension. In fact, if $W$ is a subspace of $V$ then 
    $$W^{\perp, \Phi}= \{v \in V \,\mid\, \Phi(v,w) = 0, \, \forall w\in W \}.$$
    Note that one may prove that (see, for instance, \cite[page 231]{MR1021489}),
    \begin{equation}\label{eq: ortho-com-W}
        \dim W^{\perp, \Phi} = \dim V - \dim W + \dim (W\cap \ker \Phi).
    \end{equation}
    So, we deduce that
    \begin{equation}\label{eq: ortho-com-ortho-com}
        (W^{\perp, \Phi})^{\perp, \Phi} = W + \ker \Phi.
    \end{equation}
    In fact, it is easy to see that 
    $$W + \ker \Phi \subseteq (W^{\perp, \Phi})^{\perp, \Phi}$$
    and, using (\ref{eq: ortho-com-W}) and the fact that $\ker\Phi \subseteq W^{\perp, \Phi}$, we obtain that
    $$\dim (W^{\perp, \Phi})^{\perp, \Phi} = \dim (W + \ker \Phi).$$
\end{remark}

Now, we prove that the $2$-form $\omega_h^\nu$ on $(T^*Q)_\nu$ is a presymplectic structure of corank $2$.

\begin{theorem}\label{theorem: reduced presymplectic structure corank 2}
    Let $\omega_h^\nu$ be the reduced presymplectic structure on the reduced presymplectic manifold $(T^*Q)_\nu = J^{-1}(\nu)/G_\nu$ deduced in Corollary \ref{corollary: reduction corank 2}. Then, 
    \begin{equation}\label{eq: ker omega_h^nu =  R_nu, X_nu}
        \ker \omega_h^\nu = \langle R_\nu, X_\nu\rangle.
    \end{equation}
    Thus, $((T^*Q)_\nu,\omega_h^\nu)$ is a presymplectic manifold of corank $2$.
\end{theorem}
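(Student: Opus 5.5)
The strategy is to compute $\ker \omega_h^\nu$ pointwise by lifting to $J^{-1}(\nu)$. Using \eqref{eq:(pi_nu)*omega_h^nu = (i_nu)*omega_h} and the fact that $\pi_\nu$ is a surjective submersion with vertical space $T_x(G_\nu\cdot x)$, a vector $T_x\pi_\nu(v)$ with $v\in T_x(J^{-1}(\nu))$ lies in $\ker\omega_h^\nu(\pi_\nu(x))$ if and only if $\omega_h(x)(v,w)=0$ for all $w\in T_x(J^{-1}(\nu))$. In other words, $v\in T_x(J^{-1}(\nu))\cap (T_x(J^{-1}(\nu)))^{\perp,\omega_h}$. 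It then follows that
$$\ker \omega_h^\nu(\pi_\nu(x)) = T_x\pi_\nu\big(T_x(J^{-1}(\nu))\cap (T_x(J^{-1}(\nu)))^{\perp,\omega_h}\big).$$

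The next step computes this intersection. By Lemma \ref{lemma MARSDEN}(ii), $T_x(J^{-1}(\nu)) = (T_x(G\cdot x))^{\perp,\omega_h}$. Applying \eqref{eq: ortho-com-ortho-com} gives $(T_x(J^{-1}(\nu)))^{\perp,\omega_h} = T_x(G\cdot x)+\ker\omega_h(x)$. Now take $v=\xi_{T^*Q}(x)+k$ with $k\in\ker\omega_h(x)$. The vector $k$ is a combination of $R$ and $X_{F_h}^{\omega_Q}$, and both are tangent to $J^{-1}(\nu)$ by Propositions \ref{prop: reduced X_F_h^omega_Q} and \ref{prop: reduced psi}. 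Hence $v$ is tangent to $J^{-1}(\nu)$ if and only if $\xi_{T^*Q}(x)$ is. By Lemma \ref{lemma MARSDEN}(i), this means $\xi_{T^*Q}(x)\in T_x(G_\nu\cdot x)$. The intersection is therefore $T_x(G_\nu\cdot x)+\langle R(x),X_{F_h}^{\omega_Q}(x)\rangle$. Applying $T_x\pi_\nu$ kills the first summand, and by \eqref{eq: X_F_h^omega_Q is pi_nu projectable over widetilde X_nu} and \eqref{eq: R is pi_nu projectable over R_nu} the second maps to $\langle R_\nu,X_\nu\rangle$. This proves \eqref{eq: ker omega_h^nu =  R_nu, X_nu} as a pointwise equality of subspaces.

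The step I expect to be the main obstacle is showing that $R_\nu$ and $X_\nu$ are linearly independent at every point, so that the corank is exactly $2$. This is where the standing hypothesis \eqref{eq: transversal condition T*Q} (Remark \ref{remark: U = T*Q}) enters. Suppose $aR_\nu+bX_\nu=0$ at $\pi_\nu(x)$. Then $aR(x)+bX_{F_h}^{\omega_Q}(x)\in T_x(G_\nu\cdot x)\subseteq T_x(G\cdot x)$. This vector also lies in $\ker\omega_h(x)$, so by \eqref{eq: transversal condition T*Q} it vanishes. Since $R$ and $X_{F_h}^{\omega_Q}$ are independent (Proposition \ref{prop:ker omega_h}), we get $a=b=0$.

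Consequently $\omega_h^\nu$ has constant corank $2$. Its rank is then constant, and together with closedness (Corollary \ref{corollary: reduction corank 2}) this makes $((T^*Q)_\nu,\omega_h^\nu)$ a presymplectic manifold of corank $2$.
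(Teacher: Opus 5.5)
Your proposal is correct and follows essentially the same route as the paper. It uses the same two ingredients: linear independence of $R_\nu$ and $X_\nu$ via the transversality condition \eqref{eq: transversal condition T*Q}, and the identification $\ker(\iota_\nu^*\omega_h(\alpha_q)) = \langle R(\alpha_q), X_{F_h}^{\omega_Q}(\alpha_q)\rangle + T_{\alpha_q}(G_\nu\cdot\alpha_q)$ obtained from Lemma \ref{lemma MARSDEN} and \eqref{eq: ortho-com-ortho-com}. The only cosmetic difference is that you obtain $\ker\omega_h^\nu$ as the image of $\ker(\iota_\nu^*\omega_h)$ in a single equivalence, whereas the paper proves the two inclusions separately.
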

    
\begin{proof}
    First, we will prove that $R_\nu$ and $X_\nu$ are linearly independent at every point of $(T^*Q)_\nu$. Let $\alpha_q \in J^{-1}(\nu)$ and $\lambda_1, \lambda_2 \in \R$ such that 
    $$\lambda_1 R_\nu(\pi_\nu(\alpha_q)) + \lambda_2 X_\nu(\pi_\nu(\alpha_q)) = 0.$$

    Then, from (\ref{eq: X_F_h^omega_Q is pi_nu projectable over widetilde X_nu}) and (\ref{eq: R is pi_nu projectable over R_nu}), it follows that
    $$\lambda_1 R(\alpha_q) + \lambda_2 X_{F_h}^{\omega_Q}(\alpha_q) \in T_{\alpha_q} (G_\nu \cdot \alpha_q) \subseteq T_{\alpha_q} (G \cdot \alpha_q).$$

    So, using Proposition \ref{prop:ker omega_h}, we have that 
    $$\lambda_1 R(\alpha_q) + \lambda_2 X_{F_h}^{\omega_Q}(\alpha_q) \in \ker \omega_h(\alpha_q) \cap T_{\alpha_q} (G \cdot \alpha_q),$$
    and, since $\ker \omega_h(\alpha_q) \cap T_{\alpha_q} (G \cdot \alpha_q) = \{0\}$ and the vector fields $R(\alpha_q)$ and $X_{F_h}^{\omega_Q}(\alpha_q)$ are linearly independent, we deduce that $\lambda_1 = \lambda_2 =0$. This proves that $R_\nu(\pi_\nu(\alpha_q))$ and $X_\nu(\pi_\nu(\alpha_q))$ are linearly independent. 
    
    Now, we will see that $R_\nu(\pi_\nu(\alpha_q)), X_\nu(\pi_\nu(\alpha_q)) \in \ker \omega_h^\nu(\pi_\nu(\alpha_q))$. In fact, if $\mathcal{Z}_{\alpha_q} \in T_{\alpha_q}(J^{-1}(\nu))$ then, using (\ref{eq:(pi_nu)*omega_h^nu = (i_nu)*omega_h}), (\ref{eq: X_F_h^omega_Q is pi_nu projectable over widetilde X_nu}) and Proposition \ref{prop:ker omega_h}, it follows that
    \begin{align*}
        (i_{X_\nu(\pi_\nu(\alpha_q))}\omega_h^\nu (\pi_\nu(\alpha_q)))(T_{\alpha_q}\pi_\nu(\mathcal{Z}_{\alpha_q})) & =  ((\pi_\nu)^*\omega_h^\nu)(\alpha_q)(X_{F_h}^{\omega_q}(\alpha_q),\mathcal{Z}_{\alpha_q}) 
        \\ & = (\iota_\nu^*\omega_h)(\alpha_q)(X_{F_h}^{\omega_q}(\alpha_q),\mathcal{Z}_{\alpha_q}) =0.
    \end{align*}
    Thus, $X_\nu(\pi_\nu(\alpha_q)) \in \ker\, \omega_h^\nu(\pi_\nu(\alpha_q))$. Analogously, from (\ref{eq:(pi_nu)*omega_h^nu = (i_nu)*omega_h}), (\ref{eq: R is pi_nu projectable over R_nu}) and Proposition \ref{prop:ker omega_h}, it is straightforward to see that $R_\nu(\pi_\nu(\alpha_q)) \in \ker\, \omega_h^\nu(\pi_\nu(\alpha_q))$.

    Finally, we will prove that $\ker \omega_h^\nu = \langle R_\nu, X_\nu\rangle$. Let $\alpha_q \in J^{-1}(\nu)$ and $\mathcal{X}_{\pi_\nu(\alpha_q)} \in T_{\pi_\nu(\alpha_q)}(J^{-1}(\nu)/G_\nu)$ such that $\mathcal{X}_{\pi_\nu(\alpha_q)} \in \ker \omega_h^\nu(\pi_\nu(\alpha_q))$. Then, there exists $\mathcal{Y}_{\alpha_q} \in T_{\alpha_q}(J^{-1}(\nu))$ such that $T_{\alpha_q}\pi_\nu(\mathcal{Y}_{\alpha_q}) = \mathcal{X}_{\pi_\nu(\alpha_q)}$. Thus, to prove (\ref{eq: ker omega_h^nu =  R_nu, X_nu}) it is enough to see that 
    \begin{equation}\label{eq: mathcal Y_alpha_q}
        \mathcal{Y}_{\alpha_q} \in \langle R(\alpha_q), {X_{F_h}^{\omega_Q}}(\alpha_q)\rangle + T_{\alpha_q}(G_\nu \cdot \alpha_q).
    \end{equation}
    
    Note that 
    \begin{equation}\label{eq: mathcal{Y}_{alpha_q} in ker (i_nu^*omega_h(alpha_q))}
        \mathcal{Y}_{\alpha_q} \in \ker (\iota_\nu^*\omega_h(\alpha_q)).
    \end{equation}
    In fact, if $\mathcal{Z}_{\alpha_q} \in T_{\alpha_q}(J^{-1}(\nu))$, using the fact that $\mathcal{X}_{\pi_\nu(\alpha_q)} \in \ker \omega_h^\nu(\pi_\nu(\alpha_q))$ and (\ref{eq:(pi_nu)*omega_h^nu = (i_nu)*omega_h}), it follows that 
    \begin{align*}
        (\iota_\nu^*\omega_h)(\alpha_q)(\mathcal{Y}_{\alpha_q},\mathcal{Z}_{\alpha_q}) & = (\pi_\nu^*(\omega_h^\nu))(\alpha_q)(\mathcal{Y}_{\alpha_q},\mathcal{Z}_{\alpha_q}) = \omega_h^\nu(\pi_\nu(\alpha_q))(T_{\alpha_q}\pi_\nu(\mathcal{Y}_{\alpha_q}),T_{\alpha_q}\pi_\nu(\mathcal{Z}_{\alpha_q}))
        \\ & = \omega_h^\nu(\pi_\nu(\alpha_q))(\mathcal{X}_{\pi_\nu(\alpha_q)},T_{\alpha_q}\pi_\nu(\mathcal{Z}_{\alpha_q})) = 0.
    \end{align*}

    Moreover, since 
    $$\ker(\iota_\nu^*\omega_h(\alpha_q)) = T_{\alpha_q} J^{-1}(\nu) \cap (T_{\alpha_q} J^{-1}(\nu))^{\perp, \omega_h},$$
    then, using (\ref{eq: ortho-com-ortho-com}), Proposition \ref{prop:ker omega_h} and Lemma \ref{lemma MARSDEN}, we have that
    \begin{equation}\label{eq: ker (iota_nu*(omega_h)(alpha_q))}
        \begin{split}
            \ker (\iota_\nu^*\omega_h(\alpha_q)) & = T_{\alpha_q}(J^{-1}(\nu)) \cap (((T_{\alpha_q}(G\cdot \alpha_q))^{\perp,\omega_h})^{\perp,\omega_h}) 
        \\ & = T_{\alpha_q}(J^{-1}(\nu)) \cap (\ker (\omega_h(\alpha_q)) + T_{\alpha_q}(G\cdot \alpha_q))  \\ & = T_{\alpha_q}(J^{-1}(\nu)) \cap (\langle R(\alpha_q), X_{F_h}^{\omega_Q}(\alpha_q)\rangle + T_{\alpha_q}(G\cdot \alpha_q)).
        \end{split}
    \end{equation}

    On the other hand, $R(\alpha_q),{X_{F_h}^{\omega_Q}}(\alpha_q) \in T_{\alpha_q}(J^{-1}(\nu))$ (see Propositions \ref{prop: reduced X_F_h^omega_Q} and \ref{prop: reduced psi}). Therefore, from item \textit{(i)} in Lemma \ref{lemma MARSDEN} and (\ref{eq: ker (iota_nu*(omega_h)(alpha_q))}), we deduce that
    $$\ker (\iota_\nu^*\omega_h(\alpha_q)) = \langle R(\alpha_q), {X_{F_h}^{\omega_Q}}(\alpha_q)\rangle + T_{\alpha_q}(G_\nu \cdot \alpha_q).$$

    This, using (\ref{eq: mathcal{Y}_{alpha_q} in ker (i_nu^*omega_h(alpha_q))}), proves that (\ref{eq: mathcal Y_alpha_q}) holds.
    
\end{proof}

The following diagram summarizes this reduction:
$$\xymatrix{
\left(T^*Q,\,\omega_h,\, X_{F_h}^{\omega_Q}\right)  \ar@(u,ur)[]^{(\psi, R)}  & (J^{-1}(\nu),\, \iota_\nu^*\omega_h,\, {X_{F_h}^{\omega_Q}}_{\vert J^{-1}(\nu)})  \ar[r]^{\hspace{20pt}\pi_\nu} \ar@{_{(}->}[l]_{\hspace{-20pt}\iota_\nu}  \ar@(u,ur)[]^{(\psi, R_{{\vert J^{-1}(\nu)}})} & \left((T^*Q)_\nu,\, \omega_h^\nu,\, X_\nu\right) \ar@(u,ur)[]^{(\psi_\nu, R_\nu)}
}$$

\begin{remark}\label{remark: reduction U}
    Note that Theorem \ref{theorem: reduced presymplectic structure corank 2} holds under the conditions of Remark \ref{remark: U = T*Q}. If this assumption is not satisfied, that is, if $\mathcal{U}\neq T^*Q$, then, the reduced presymplectic manifold $((T^*Q)_\nu, \omega_h^\nu)$ of corank $2$ is replaced by $(\mathcal{U}_\nu, \iota_{\mathcal{U}_\nu}^*({\omega_h^\nu}))$, where 
$$\mathcal{U}_\nu = (J \circ \iota_{\mathcal{U}})^{-1}(\nu)/ G_\nu$$
is a quotient manifold since $(J \circ \iota_{\mathcal{U}})^{-1}(\nu)$ is a $\R$-invariant and $G_\nu$-invariant open subset of $J^{-1}(\nu)$ and $\iota_{\mathcal{U}_\nu} : \mathcal{U}_\nu \hookrightarrow (T^*Q)_\nu$ is the canonical inclusion. In fact, $\mathcal{U}_\nu$ can be identified with an open subset of $(T^*Q)_\nu = J^{-1}(\nu)/G_\nu$ and we can consider the restrictions to $\mathcal{U}_\nu$ of the vector field $X_\nu$, the principal action $\psi_\nu: \R \times \mathcal{U}_\nu \to \mathcal{U}_\nu$ and the infinitesimal generator $R_\nu$ of $\psi_\nu$.
\end{remark}

Now, we continue with Example \ref{ex: momentum map}.

\begin{example}[$N$-dimensional harmonic oscillator seen by an observer that moves with a constant velocity]\label{ex: presympl reduc}
    Continuing again with Example \ref{ex:harmonic oscillator}, from the momentum map $J: T^*Q \to \R$, given in (\ref{ex:J}), we obtain that
    \begin{itemize}
        \item For any $\nu \geq \frac{mv^2}{2}$, $(J \circ \iota_{\mathcal{U}})^{-1}(\nu) = \emptyset$.
        \item For any $\nu < \frac{mv^2}{2}$, $(J \circ \iota_{\mathcal{U}})^{-1}(\nu)$ is the $2N+1$-dimensional submanifold given by
    $$(J \circ \iota_{\mathcal{U}})^{-1}(\nu)= \left\{ \begin{array}{ll} (q^i,t,p_i,p)\in \mathcal{U} \,\vert\, & \frac{1}{2m}\left((p_1+mv)^2 + p_\alpha^2 \right) + \frac{1}{2}m\Omega^2\left((q^1+vt)^2+(q^\alpha)^2\right) \\[6pt]  & = \frac{m v^2}{2}-\nu \end{array} \right\}$$
    \end{itemize}

    To describe the quotient $(J \circ \iota_{\mathcal{U}})^{-1}(\nu) / \R$ for any $\nu < \frac{mv^2}{2}$, we define $\mathcal{N}=(J \circ \iota_{\mathcal{U}})^{-1}(\nu) \cap \varphi^{-1}(0)$, where $\varphi$ is the first projection, i.e., $\varphi(q^i,t,p_i,p)= q^1$. In fact,
    $$\mathcal{N}= \left\{ \begin{array}{ll} & (q^\alpha,t,p_1,p_\alpha,p)\in \R^{2N+1}\setminus \{(0,0,-mv,0,p)\} \,\vert\, \\[6pt] & \frac{1}{2m}\left((p_1+mv)^2 + p_\alpha^2 \right) + \frac{1}{2}m\Omega^2\left((vt)^2+(q^\alpha)^2\right) = \frac{m v^2}{2}-\nu \end{array} \right\}$$
    
    Deriving the constraint that defines $\mathcal{N}$, we deduce that the points of this submanifold satisfy
    \begin{equation}\label{ex:iota*(vdp_1+ 1/m p_idp_i + mOmega^2(v^2tdt + q^alpha dq^alpha))=0}
        \iota^*(vdp_1+ \frac{1}{m}p_idp_i + m\Omega^2(v^2tdt + q^\alpha dq^\alpha))=0,
    \end{equation}
    where $\iota : \mathcal{N} \hookrightarrow (J \circ \iota_{\mathcal{U}})^{-1}(\nu)$ is the inclusion map.

    From the integral curves of $1_{T^*Q} = \frac{\partial}{\partial t} - v\frac{\partial}{\partial q^1}$, we define the map 
    \begin{equation}\label{ex:dif Phi}
        \begin{array}{rccl}
        \Phi \colon & \R \times \mathcal{N} & \longrightarrow & (J \circ \iota_{\mathcal{U}})^{-1}(\nu)\\
        &(\hat{t},(q^\alpha,t,p_i,p)) & \longmapsto & (-v\hat{t},q^\alpha, t+\hat{t},p_i,p)
    \end{array}
    \end{equation}
    which is a diffeomorphism, with inverse map given by
    $$
    \begin{array}{rccl}
        \Phi^{-1} \colon & (J \circ \iota_{\mathcal{U}})^{-1}(\nu) & \longrightarrow & \R \times \mathcal{N}\\
        &(q^1,q^\alpha,t,p_i,p) & \longmapsto & (-\frac{1}{v}q^1,q^\alpha, t+\frac{1}{v}q^1,p_i,p).
    \end{array}
    $$

    Now, consider the action $\vartheta: \R \times (\R \times \mathcal{N}) \to \R \times \mathcal{N}$ defined by $$\vartheta(s,(\hat{t},q^\alpha,t,p_i,p)) = (s+\hat{t},q^\alpha,t,p_i,p).$$ We have that the diffeomorphism $\Phi$ is equivariant with respect to the actions $\vartheta$ and $T^*\phi_{\vert (J \circ \iota_{\mathcal{U}})^{-1}(\nu)}$, i.e., the following diagram is commutative
    $$\xymatrix{ \R \times \mathcal{N} \ar[r]^{\Phi\hspace{15pt}} \ar[d]_{(\vartheta)_s} & (J \circ \iota_{\mathcal{U}})^{-1}(\nu) \ar[d]^{T_s^*\phi_{\vert (J \circ \iota_{\mathcal{U}})^{-1}(\nu)}} 
    \\ \R \times \mathcal{N} \ar[r]_{\hspace{-15pt}\Phi} & (J \circ \iota_{\mathcal{U}})^{-1}(\nu) }$$

    Thus, we obtain that 
    $$(J \circ \iota_{\mathcal{U}})^{-1}(\nu) / \R \cong (\R \times \mathcal{N}) / \R \cong \mathcal{N}.$$

    Taking into account the diffeomorphism $\Phi$ given in (\ref{ex:dif Phi}), and that $\iota_\nu^*(\omega_h)\in \Omega^2((J \circ \iota_{\mathcal{U}})^{-1}(\nu))$, where $\iota_\nu : (J \circ \iota_{\mathcal{U}})^{-1}(\nu) \hookrightarrow T^*Q$ is the canonical inclusion, we can consider the $2$-form $\Phi^*(\iota_\nu^*(\omega_h))\in \Omega^2(\R \times \mathcal{N})$, which is given by
    \begin{align*}
        \Phi^*(\iota_\nu^*(\omega_h)) =  & dq^\alpha\wedge dp_\alpha + \frac{1}{m}p_idp_i \wedge dt + m\Omega^2q^\alpha dq^\alpha\wedge dt \\ & + \left(vdp_1+ \frac{1}{m}p_idp_i + m\Omega^2(v^2tdt + q^\alpha dq^\alpha)\right) \wedge d\hat{t}.
    \end{align*}

    However, from (\ref{ex:iota*(vdp_1+ 1/m p_idp_i + mOmega^2(v^2tdt + q^alpha dq^alpha))=0}), the last term vanishes identically on $\mathcal{N}$. Consequently, the reduced $2$-form on $\mathcal{N}$ is 
    \begin{equation}\label{ex: omega_h^nu}
        \omega_h^\nu = dq^\alpha\wedge dp_\alpha + \frac{1}{m}p_idp_i \wedge dt + m\Omega^2q^\alpha dq^\alpha\wedge dt.
    \end{equation}

    Finally, the restriction of the Hamiltonian vector field $X_{F_h}^{\omega_Q}$ to $(J \circ \iota_{\mathcal{U}})^{-1}(\nu)$ can be seen on $\R \times \mathcal{N}$ as
    $$X_{F_h}^{\omega_Q} = \left(\frac{p_1}{mv}+1\right)\frac{\partial}{\partial t} + \frac{1}{m}\left(p_\alpha \frac{\partial}{\partial q^\alpha} - \frac{p_1}{v}\frac{\partial}{\partial \hat{t}}\right) - m \Omega^2\left( vt\left(v \frac{\partial}{\partial p} + \frac{\partial}{\partial p_1} \right) + q^\alpha \frac{\partial}{\partial p_\alpha}\right),$$
    which projects on the reduced space $\mathcal{N}$ in the reduced Hamiltonian vector field
    \begin{equation}\label{ex: widetilde X_nu}
        X_\nu = \left(\frac{p_1}{mv}+1\right)\frac{\partial}{\partial t} + \frac{1}{m}p_\alpha \frac{\partial}{\partial q^\alpha} - m \Omega^2\left( vt\left(v \frac{\partial}{\partial p} + \frac{\partial}{\partial p_1} \right) + q^\alpha \frac{\partial}{\partial p_\alpha}\right).
    \end{equation}

    In addition, we have
    \begin{equation}\label{ex: R_nu}
        R_\nu = \frac{\partial}{\partial p}.
    \end{equation}
    Thus, we have that $(\mathcal{N}, \omega_h^\nu)$ is a presymplectic manifold of corank $2$ with $\ker \omega_h^\nu = \langle R_\nu, X_\nu \rangle$.
\end{example}

\subsubsection{Presymplectic reduction of corank $1$}
On the other hand, analogously to the previous reduction, we can reduce the space $(V^*\Pi, \Omega_h)$. Taking into account the reduction of presymplectic manifolds of corank $1$ developed in \cite{GUTIERREZSAGREDO}, we can apply the following theorem. 
\begin{theorem}\label{theorem: presym reduc corank 1}\cite{GUTIERREZSAGREDO}
    Let $\phi: G \times M \to M$ be a Hamiltonian presymplectic action of a Lie group $G$ for a presymplectic structure $(\omega, R)$ of corank $1$ on $M$ with $G$-equivariant momentum map $J: M \to \g^*$. Moreover, let $\nu\in \g^*$ such that $J^{-1}(\nu)\neq \emptyset$ and suppose that $\phi$ is infinitesimally free and the space of orbits $M_\nu = J^{-1}(\nu)/G_\nu$ is a manifold such that the canonical projection $\pi_\nu : J^{-1}(\nu) \to M_\nu$ is a submersion. Then,
    \begin{itemize}
        \item[(i)] There exists a unique closed $2$-form $\omega_\nu$ on $M_\nu$ such that
        $$\pi_\nu^*(\omega_\nu) = \iota_\nu^*\omega,$$
        where $\iota_\nu : J^{-1}(\nu) \hookrightarrow M$ is the inclusion map.
        \item[(ii)] The vector field $R$ restricts to $J^{-1}(\nu)$ and its restriction is $\pi_\nu$-projectable. Its projection is a vector field $R^\nu$ such that 
        $$\ker \, \omega_\nu = \langle R^\nu\rangle.$$
        Thus, $(M_\nu,\omega_\nu)$ is a presymplectic manifold of corank $1$. 
    \end{itemize}
\end{theorem}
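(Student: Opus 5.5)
The plan is to mirror the corank-$2$ argument of Corollary \ref{corollary: reduction corank 2} and Theorem \ref{theorem: reduced presymplectic structure corank 2}, now for a single kernel generator $R$.

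\emph{Part (i).} First, since $\phi$ is infinitesimally free and condition (\ref{eq: ker omega cap T_x(G x) = 0}) holds, the argument of Proposition \ref{prop: regular value} shows that $J$ is a submersion. Hence $J^{-1}(\nu)$ is a submanifold, and $Ad^*$-equivariance makes it $G_\nu$-invariant. Set $\omega' = \iota_\nu^*\omega$. I will check two facts:
\begin{itemize}
\item $\omega'$ is $G_\nu$-invariant, because the action is presymplectic.
\item $\omega'$ is $\pi_\nu$-basic: for $\xi\in\g_\nu$ and $v\in T_x J^{-1}(\nu)$ we have $\omega(\xi_M,v)=dJ_\xi(v)=0$, so $\xi_M\in\ker\omega'$.
\end{itemize}
Since $\pi_\nu$ is a surjective submersion with connected fibres (the $G_\nu$-orbits, in the connected-group case; otherwise invariance covers the remaining components), $\omega'$ descends to a unique $2$-form $\omega_\nu$. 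Uniqueness follows from injectivity of $\pi_\nu^*$. Closedness follows from $\pi_\nu^*d\omega_\nu = d\iota_\nu^*\omega=0$ and injectivity again.

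\emph{Part (ii), restriction and projection of $R$.} Since $R\in\ker\omega$, we get $dJ_\xi(R)=\omega(\xi_M,R)=0$, so $R$ is tangent to $J^{-1}(\nu)$. For projectability, I will use that $\ker\omega=\langle R\rangle$ is $G$-invariant, since $\Phi_g^*\omega=\omega$. This gives $T\Phi_g(R)=f_g R$ for some function $f_g$, and this is where the normalization enters. In the mechanical setting of \cite{GUTIERREZSAGREDO}, $R$ is fixed by a normalization of the form $\eta(R)=1$ with $\eta$ $G$-invariant, and such an $R$ is $G$-invariant. So I will take $R$ to be $G$-invariant as part of the hypothesis ``presymplectic structure $(\omega,R)$''. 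Then $R|_{J^{-1}(\nu)}$ is $G_\nu$-invariant and projects to a vector field $R^\nu$. Because $R\in\ker\omega'$, the relation $\pi_\nu^*\omega_\nu=\omega'$ gives $R^\nu\in\ker\omega_\nu$.

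\emph{Part (ii), kernel computation (the main obstacle).} This is the step I expect to be hardest, and I will copy the proof of Theorem \ref{theorem: reduced presymplectic structure corank 2}. Lemma \ref{lemma MARSDEN} and (\ref{eq: ortho-com-ortho-com}) give
\[
\ker\omega'_x = T_xJ^{-1}(\nu)\cap(\langle R(x)\rangle + T_x(G\cdot x)).
\]
Since $R(x)\in T_xJ^{-1}(\nu)$, any element of this set is $\lambda R(x)+w$ with $w\in T_x(G\cdot x)\cap T_xJ^{-1}(\nu)$. By item (i) of Lemma \ref{lemma MARSDEN}, $w\in T_x(G_\nu\cdot x)$. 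Therefore $\ker\omega'_x=\langle R(x)\rangle + T_x(G_\nu\cdot x)$.

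A kernel vector of $\omega_\nu$ lifts to a kernel vector of $\omega'$, so it projects into $\langle R^\nu\rangle$. It remains to show $R^\nu\neq0$. If $R(x)\in T_x(G_\nu\cdot x)$, then $R(x)$ would be a nonzero element of $\ker\omega(x)\cap T_x(G\cdot x)$, contradicting (\ref{eq: ker omega cap T_x(G x) = 0}). Hence $\ker\omega_\nu=\langle R^\nu\rangle$, and $(M_\nu,\omega_\nu)$ has corank $1$.
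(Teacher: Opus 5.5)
Your argument is correct. The paper only cites \cite{GUTIERREZSAGREDO} for this statement, but your proof is exactly the corank-$1$ specialization of the paper's proof of Theorem~\ref{theorem: reduced presymplectic structure corank 2}: descend $\iota_\nu^*\omega$, compute $\ker(\iota_\nu^*\omega)$ via Lemma~\ref{lemma MARSDEN} and (\ref{eq: ortho-com-ortho-com}), and use (\ref{eq: ker omega cap T_x(G x) = 0}) to get $R^\nu\neq 0$. Reading $G$-invariance of $R$ into the hypothesis on $(\omega,R)$ is genuinely needed for the projectability claim, and it holds in the paper's application, since $R_h$ is determined by the $G$-invariant conditions $i_{R_h}\Omega_h=0$ and $\eta_1(R_h)=1$.
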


Now, we will apply the previous result to the presymplectic structure $(\Omega_h, R_h)$ of corank $1$ on $V^*\Pi$. The necessary ingredients for this reduction are: 
\begin{itemize}
    \item The vertical bundle $V^*\Pi$ with the presymplectic structure $\Omega_h$, given in (\ref{eq:def Omega_h}) by
    $$\Omega_h = h^*(\omega_Q) = h^*(\omega_h),$$
    with $\ker\Omega_h = \langle R_h \rangle$.
    \item The action $V^*\phi: G \times V^*\Pi \to V^*\Pi$ given by (\ref{eq: def V*phi}) deduced from a free action $\phi: G \times Q \to Q$.
    \item The $Ad^*$-equivariant momentum map $J^V:V^*\Pi \to \g^*$, given in Proposition \ref{prop:J^V momentum map} and characterized by 
    $$J = J^V \circ \mu.$$
\end{itemize}

Moreover, using Remark \ref{remark: T*phi free and proper}, we have that $V^*\phi$ is free and then it is infinitesimally free (see Remark \ref{remark: infinitesimally free}). Thus, if $\nu \in \g^*$ is such that $(J^V)^{-1}(\nu)\neq \emptyset$, then, from Proposition \ref{prop: regular value}, we have that $(J^V)^{-1}(\nu)$ is a submanifold of $V^*\Pi$. Assume that the quotient space $(J^V)^{-1}(\nu)/G_\nu$ is a smooth manifold and the canonical projection $\pi_\nu^V: (J^V)^{-1}(\nu) \to (J^V)^{-1}(\nu)/G_\nu$ is a submersion. Then, using Theorem \ref{theorem: presym reduc corank 1}, we deduce the following result. 

\begin{corollary}\label{corollary: reduction corank 1}
    Let $(V^*\Pi, \Omega_h, V^*\phi, J^V)$ be the symmetric presymplectic Hamiltonian system given in Proposition \ref{prop:J^V momentum map}. Moreover, let $\nu\in \g^*$ such that $J^{-1}(\nu) \neq \emptyset$ and suppose that $\phi$ is free and the space of orbits $(V^*\Pi)_\nu := (J^V)^{-1}(\nu)/G_\nu$ is a manifold such that the canonical projection $\pi_\nu^V: (J^V)^{-1}(\nu) \to (J^V)^{-1}(\nu)/G_\nu$ is a submersion. Then, 
    \begin{itemize}
        \item[(i)] There exists a unique presymplectic closed $2$-form $\Omega_h^\nu$ on $(V^*\Pi)_\nu$ such that
        \begin{equation}\label{eq:(pi_nu^V)*Omega_h^nu = (i_nu^V)*Omega_h}
        (\pi_\nu^V)^*\Omega_h^\nu = (\iota_\nu^V)^*\Omega_h,
        \end{equation}
        where $\iota_\nu^V: (J^V)^{-1}(\nu) \hookrightarrow V^*\Pi$ is the canonical inclusion.

        \item[(ii)] The vector field $R_h$, characterized by $\ker\,\Omega_h = \langle R_h\rangle$, restricts to $(J^V)^{-1}(\nu)$ and its restriction is $\pi_\nu^V$-projectable. Its projection is a vector field $R_h^\nu$ such that
        \begin{equation}\label{eq: ker Omega_h^nu = R_h^nu}
            \ker\, \Omega_h^\nu = \langle R_h^\nu \rangle.
        \end{equation}
    \end{itemize}
    Thus, $((V^*\Pi)_\nu,\Omega_h^\nu)$ is a presymplectic manifold of corank $1$.
\end{corollary}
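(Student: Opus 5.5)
The plan is to verify that the hypotheses of Theorem \ref{theorem: presym reduc corank 1} hold for the system $(V^*\Pi, \Omega_h, V^*\phi, J^V)$ with $R = R_h$, and then read off (i) and (ii) directly.

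First, $\Omega_h$ is a presymplectic structure of corank $1$ with $\ker \Omega_h = \langle R_h\rangle$ by (\ref{eq: ker Omega_h = R_h}). By Proposition \ref{prop:J^V momentum map}, $V^*\phi$ is a presymplectic Hamiltonian action with $Ad^*$-equivariant momentum map $J^V$. This includes the transversality condition (\ref{eq: ker omega cap T_x(G x) = 0}) on $V^*\Pi$, which was obtained by projecting the condition on $T^*Q$ through $T\mu$ under the standing assumption of Remark \ref{remark: U = T*Q}. Since $\phi$ is free, $V^*\phi$ is free (Remark \ref{remark: T*phi free and proper}), hence infinitesimally free (Remark \ref{remark: infinitesimally free}). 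Proposition \ref{prop: regular value} then shows that $J^V$ is a submersion, so $(J^V)^{-1}(\nu)$ is a submanifold. The manifold and submersion assumptions on the quotient are part of the hypotheses.

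The nonemptiness hypothesis is stated for $J^{-1}(\nu)$ rather than $(J^V)^{-1}(\nu)$. I would use $J = J^V\circ\mu$ together with the surjectivity of $\mu$ to get $\mu(J^{-1}(\nu)) = (J^V)^{-1}(\nu)$, so one set is nonempty exactly when the other is.

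With all hypotheses checked, Theorem \ref{theorem: presym reduc corank 1}(i) gives a unique closed $\Omega_h^\nu$ with $(\pi^V_\nu)^*\Omega_h^\nu = (\iota^V_\nu)^*\Omega_h$. Part (ii) gives that $R_h$ is tangent to $(J^V)^{-1}(\nu)$ and $\pi_\nu^V$-projectable onto $R_h^\nu$, with $\ker\Omega_h^\nu = \langle R_h^\nu\rangle$.

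For the reader's benefit I would also check tangency directly, as a consistency check: for $\beta_q = \mu(\alpha_q)$,
$$dJ^V_\xi(R_h) = dJ^V_\xi(T\mu(X_{F_h}^{\omega_Q})) = dJ_\xi(X_{F_h}^{\omega_Q}) = 0,$$
using Proposition \ref{prop: reduced X_F_h^omega_Q} or Theorem \ref{theorem: noether}(ii). Projectability follows from the $G$-invariance of $R_h$. That invariance comes from $\ker\Omega_h$ being $G$-invariant together with $\eta_1(R_h)=1$ and the $G$-invariance of $\eta_1$, which is Assumption \ref{assum 1} pulled back.

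The main obstacle is justifying transversality on $V^*\Pi$, which Theorem \ref{theorem: presym reduc corank 1} needs in order to get a corank-$1$ reduced form. I would verify it concretely. Suppose $\lambda R_h(\beta_q) = \xi_{V^*\Pi}(\beta_q)$. Lifting through $\mu$ with (\ref{eq:mu G-equivariant inf gen}) and Proposition \ref{prop:X_F_h^omega_Q is mu projectable}(ii) gives
$$\lambda X_{F_h}^{\omega_Q}(\alpha_q) - \xi_{T^*Q}(\alpha_q) \in \ker T\mu = \langle R\rangle.$$
The left-hand side therefore lies in $\ker\omega_h + T(G\cdot\alpha_q)$. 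Condition (\ref{eq: x_FhomegaQ not in orbit}) then forces $\lambda = 0$, and freeness gives $\xi = 0$.
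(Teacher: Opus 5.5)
Your proposal is correct and follows the paper's route. The paper also obtains this corollary by noting that $V^*\phi$ is free, hence infinitesimally free, and that $(J^V)^{-1}(\nu)$ is a submanifold by Proposition \ref{prop: regular value}, using the transversality built into Proposition \ref{prop:J^V momentum map} under Remark \ref{remark: U = T*Q}; it then invokes Theorem \ref{theorem: presym reduc corank 1} with $R=R_h$.

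Your observation that $J^{-1}(\nu)\neq\emptyset$ is equivalent to $(J^V)^{-1}(\nu)\neq\emptyset$, via $J=J^V\circ\mu$ with $\mu$ surjective, is a welcome clarification. In your explicit transversality check, the operative fact is that $\xi_{T^*Q}(\alpha_q)=\lambda X_{F_h}^{\omega_Q}(\alpha_q)-cR(\alpha_q)$ lies in the \emph{intersection} $\ker\omega_h(\alpha_q)\cap T_{\alpha_q}(G\cdot\alpha_q)=\{0\}$, not merely the sum. Together with the independence of $R$ and $X_{F_h}^{\omega_Q}$, this gives $\lambda=c=0$.
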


The following diagram shows this reduction:
$$\xymatrix{
\left(V^*\Pi,\,\Omega_h, \, R_h\right)   & ((J^V)^{-1}(\nu),\, (\iota_\nu^V)^*\Omega_h,\,{R_h}_{\vert (J^V)^{-1}(\nu)})  \ar@{_{(}->}[l]_{\hspace{-40pt}\iota_\nu^V} \ar[r]^{\hspace{40pt}\pi_\nu^V}   & \left((V^*\Pi)_\nu,\, \Omega_h^\nu,\, R_h^\nu\right) 
}$$

\begin{remark}\label{remark: reduction V}
    Note that Cororally \ref{corollary: reduction corank 1} holds under the conditions of Remark \ref{remark: U = T*Q}. If this assumption is not satisfied, that is, if $\mathcal{U}\neq T^*Q$, then, we consider the open subset $\mathcal{V} = \mu(\mathcal{U})$ of $V^*\Pi$ and the principal $\R$-bundle $\mu: \mathcal{U} \to \mathcal{V}$. Thus, the reduced presymplectic manifold $((V^*\Pi)_\nu, \Omega_h^\nu)$ of corank $1$ is replaced by $(\mathcal{V}_\nu, \iota_{\mathcal{V}_\nu}^*(\Omega_h^\nu))$, where 
$$\mathcal{V}_\nu = (J^V \circ \iota_{\mathcal{V}})^{-1}(\nu)/ G_\nu$$
is a quotient manifold, since $(J^V \circ \iota_{\mathcal{V}})^{-1}(\nu)$ is a $G_\nu$-invariant open subset of $(J^V)^{-1}(\nu)$ and $\iota_{\mathcal{V}_\nu}: \mathcal{V}_\nu \hookrightarrow (V^*\Pi)_\nu$ is the canonical inclusion. In fact, $\mathcal{V}_\nu$ can be identified with an open subset of $(V^*\Pi)_\nu = (J^V)^{-1}(\nu)/G_\nu$ and we can consider the restriction to $\mathcal{V}_\nu$ of the vector field $R_h^\nu$.
\end{remark}

Now, we continue with Example \ref{ex: momentum map}.

\begin{example}[$N$-dimensional harmonic oscillator seen by an observer that moves with a constant velocity]\label{ex: mechan presympl reduc}
 Another time, continuing with Example \ref{ex:harmonic oscillator}, from the momentum map $J^V: V^*\Pi \to \R$, given in (\ref{ex: J^V}), we deduce that
    \begin{itemize}
        \item For any $\nu \geq \frac{mv^2}{2}$, $(J^V \circ \iota_{\mathcal{V}})^{-1}(\nu)= \emptyset$.
        \item For any $\nu < \frac{mv^2}{2}$, $(J^V \circ \iota_{\mathcal{V}})^{-1}(\nu)$ is the $2N$-dimensional submanifold given by
    $$(J^V \circ \iota_{\mathcal{V}})^{-1}(\nu)= \left\{ \begin{array}{ll} (q^i,t,p_i)\in \mathcal{V} \,\vert\, & \frac{1}{2m}\left((p_1+mv)^2 + p_\alpha^2 \right) + \frac{1}{2}m\Omega^2\left((q^1+vt)^2+(q^\alpha)^2\right) \\[6pt]  & = \frac{m v^2}{2}-\nu \end{array} \right\}$$
    \end{itemize}
  
    To describe the quotient $(J^V \circ \iota_{\mathcal{V}})^{-1}(\nu) / \R$ for any $\mu < \frac{mv^2}{2}$, we define $\mathcal{N}^V=(J^V \circ \iota_{\mathcal{V}})^{-1}(\nu)\cap (\varphi^V)^{-1}(0)$, where $\varphi^V$ is the first projection, i.e., $\varphi^V(q^i,t,p_i)= q^1$. In fact,
    $$\mathcal{N}^V= \left\{ \begin{array}{ll} & (q^\alpha,t,p_1,p_\alpha)\in \R^{2N}\setminus \{(0,0,-mv,0,p)\} \,\vert\, \\[6pt] & \frac{1}{2m}\left((p_1+mv)^2 + p_\alpha^2 \right) + \frac{1}{2}m\Omega^2\left((vt)^2+(q^\alpha)^2\right) = \frac{m v^2}{2}-\nu \end{array} \right\}$$
    
    Using the constraint that defines $\mathcal{N}^V$, we deduce that 
    \begin{equation}\label{ex:iota^V*(vdp_1+ 1/m p_idp_i + mOmega^2(v^2tdt + q^alpha dq^alpha))=0}
        (\iota^V)^*(vdp_1+ \frac{1}{m}p_idp_i + m\Omega^2(v^2tdt + q^\alpha dq^\alpha))=0,
    \end{equation}
    where $\iota^V : \mathcal{N}^V \hookrightarrow (J^V \circ \iota_{\mathcal{V}})^{-1}(\nu)$ is the inclusion map.

    From the flow of the infinitesimal generator $1_{V^*\Pi}$ of $V^*\phi$, we consider the map 
    \begin{equation}\label{ex:dif Phi^V}
        \begin{array}{rccl}
        \Phi^V \colon & \R \times \mathcal{N}^V & \longrightarrow & (J^V \circ \iota_{\mathcal{V}})^{-1}(\nu)\\
        &(\hat{t},(q^\alpha,t,p_i)) & \longmapsto & (-v\hat{t},q^\alpha, t+\hat{t},p_i)
    \end{array}
    \end{equation}
    which is a diffeomorphism, with inverse map given by
    $$
    \begin{array}{rccl}
        (\Phi^V)^{-1} \colon & (J^V \circ \iota_{\mathcal{V}})^{-1}(\nu) & \longrightarrow & \R \times \mathcal{N}^V\\
        &(q^1,q^\alpha,t,p_i) & \longmapsto & (-\frac{1}{v}q^1,q^\alpha, t+\frac{1}{v}q^1,p_i).
    \end{array}
    $$

    Now, consider the action $\vartheta^V: \R \times (\R \times \mathcal{N}^V) \to \R \times \mathcal{N}^V$ defined by $$\vartheta^V(s,(\hat{t},q^\alpha,t,p_i)) = (s+\hat{t},q^\alpha,t,p_i).$$ We have that the diffeomorphism $\Phi^V$ is equivariant with respect to the actions $\vartheta^V$ and $V^*\phi_{\vert (J^V \circ \iota_{\mathcal{V}})^{-1}(\nu)}$, i.e., the following diagram is commutative
    $$\xymatrix{ \R \times \mathcal{N}^V \ar[r]^{\hspace{-15pt}\Phi^V} \ar[d]_{(\vartheta^V)_s} & (J^V \circ \iota_{\mathcal{V}})^{-1}(\nu) \ar[d]^{V_s^*\phi_{\vert (J^V \circ \iota_{\mathcal{V}})^{-1}(\nu)}} 
    \\ \R \times \mathcal{N}^V \ar[r]_{\hspace{-15pt}\Phi^V} & (J^V \circ \iota_{\mathcal{V}})^{-1}(\nu) }$$

    As a consequence, we obtain that 
    $$(J^V \circ \iota_{\mathcal{V}})^{-1}(\nu) / \R \cong (\R \times \mathcal{N}^V) / \R \cong \mathcal{N}^V.$$
    Taking into account the diffeomorphism $\Phi^V$ given in (\ref{ex:dif Phi^V}), and that $(\iota_\nu^V)^*(\Omega_h)\in \Omega^2((J^V \circ \iota_{\mathcal{V}})^{-1}(\nu))$, where $\iota_\nu^V: (J^V \circ \iota_{\mathcal{V}})^{-1}(\nu) \hookrightarrow V^*\Pi$ is the canonical inclusion, we can consider the $2$-form $(\Phi^V)^*((\iota_\nu^V)^*(\Omega_h))\in \Omega^2(\R \times \mathcal{N}^V)$, which is given by
    \begin{align*}
        (\Phi^V)^*((\iota_\nu^V)^*(\Omega_h)) =  & dq^\alpha\wedge dp_\alpha + \frac{1}{m}p_idp_i \wedge dt + m\Omega^2q^\alpha dq^\alpha\wedge dt \\ & + \left(vdp_1+ \frac{1}{m}p_idp_i + m\Omega^2(v^2tdt + q^\alpha dq^\alpha)\right) \wedge d\hat{t}.
    \end{align*}

    However, from (\ref{ex:iota^V*(vdp_1+ 1/m p_idp_i + mOmega^2(v^2tdt + q^alpha dq^alpha))=0}), the last term vanishes identically on $\mathcal{N}^V$. Consequently, the reduced $2$-form on $\mathcal{N}^V$ is 
    \begin{equation}\label{ex: Omega_h^nu}
        \Omega_h^\nu = dq^\alpha\wedge dp_\alpha + \frac{1}{m}p_idp_i \wedge dt + m\Omega^2q^\alpha dq^\alpha\wedge dt.
    \end{equation}

    Finally, the restriction of the vector field $R_h$ to $(J^V \circ \iota_{\mathcal{V}})^{-1}(\nu)$ can be seen on $\R \times \mathcal{N}^V$ as
    $$R_h = \left(\frac{p_1}{mv}+1\right)\frac{\partial}{\partial t} + \frac{1}{m}\left(p_\alpha \frac{\partial}{\partial q^\alpha} - \frac{p_1}{v}\frac{\partial}{\partial \hat{t}}\right) - m \Omega^2\left( vt\frac{\partial}{\partial p_1} + q^\alpha \frac{\partial}{\partial p_\alpha}\right),$$
    which projects on the reduced space $\mathcal{N}^V$ in the reduced vector field
    \begin{equation}\label{ex: R_h^nu}
        R_h^\nu = \left(\frac{p_1}{mv}+1\right)\frac{\partial}{\partial t} + \frac{1}{m}p_\alpha \frac{\partial}{\partial q^\alpha} - m \Omega^2\left( vt\frac{\partial}{\partial p_1} + q^\alpha \frac{\partial}{\partial p_\alpha}\right).
    \end{equation}

    Thus, $(\mathcal{N}^V,\Omega_h^\nu)$ is a presymplectic manifold of corank $1$.

\end{example}

\subsection{Reduction of the principal $\R$-bundle $\mu$}
From the previous subsections, we have reduced the presymplectic manifolds $(T^*Q,\omega_h)$ and $(V^*\Pi, \Omega_h)$ to the presymplectic manifolds $((T^*Q)_\nu, \omega_h^\nu)$ and $((V^*\Pi)_\nu, \Omega_h^\nu)$, respectively. 
Since
\[\mu:T^*Q \longrightarrow V^*\Pi
\] is a $G$-equivariant map and a principal $\R$-bundle with principal action $\psi$ given by (\ref{eq:action psi}),
our next goal, following ideas of \cite{IGNACIO}, is to prove that $(T^*Q)_\nu$ and $(V^*\Pi)_\nu$ are the total space and the base manifold, respectively, of a reduced principal $\R$-bundle $\mu_\nu: (T^*Q)_\nu \to (V^*\Pi)_\nu$ which is defined as follows.

We know, from Proposition \ref{prop:J^V momentum map}, that $J= J^V\circ \mu$, which means that the restriction $\mu_{\vert J^{-1}(\nu)}: J^{-1}(\nu) \to (J^V)^{-1}(\nu)$ of $\mu$ to the closed submanifold $J^{-1}(\nu)$ is a surjective submersion. Thus,
\begin{equation}\label{eq: i_nu^V o mu = mu o i_nu}
    \iota_\nu^V \circ \mu_{\vert J^{-1}(\nu)} = \mu \circ \iota_\nu,
\end{equation}
where $\iota_\nu: J^{-1}(\nu) \hookrightarrow T^*Q$ and $\iota_\nu^V: (J^V)^{-1}(\nu) \hookrightarrow V^*\Pi$ are the canonical inclusions. Moreover, we have the actions $T^*\phi: G_\nu \times J^{-1}(\nu) \to J^{-1}(\nu)$ and $V^*\phi: G_\nu \times (J^V)^{-1}(\nu) \to (J^V)^{-1}(\nu)$ of the isotropy group $G_\nu$ on $J^{-1}(\nu)$ and $(J^V)^{-1}(\nu)$, respectively, and from Proposition \ref{prop: mu is G-equivariant}, it follows that $\mu_{\vert J^{-1}(\nu)}: J^{-1}(\nu) \to (J^V)^{-1}(\nu) $ is equivariant with respect to the previous actions. Denote by 
\begin{equation}\label{eq:mu_nu principal R-bundle}
    \mu_\nu: (T^*Q)_\nu= J^{-1}(\nu)/G_\nu  \longrightarrow (V^*\Pi)_\nu = (J^V)^{-1}(\nu)/G_\nu
\end{equation}
the induced map on the quotient spaces, which is characterized by
\begin{equation}\label{eq:mu_nu o pi_nu = pi_nu^V o mu}
    \mu_\nu \circ \pi_\nu = \pi_\nu^V \circ \mu_{\vert J^{-1}(\nu)}.
\end{equation}

Furthermore, from Proposition \ref{prop: reduced psi}, we know that the principal action $\psi: \R \times T^*Q \to T^*Q$ induces an action $\psi_\nu : \R \times (T^*Q)_\nu \to (T^*Q)_\nu$ of $\R$ on $(T^*Q)_\nu$. In fact, using (\ref{eq:action psi}) and (\ref{eq:(psi_nu)_s o pi_nu = pi_nu o psi_s}), we know that
\begin{equation}\label{eq: action psi_nu}
    \psi_\nu(s,(\pi_\nu(\alpha_q))) = \pi_\nu(\alpha_q) + s \pi_\nu(\eta(q))
\end{equation}
for all $s\in \R$ and $\alpha_q \in J^{-1}(\nu)$. In particular, the following theorem tells us that the map $\mu_\nu$ is a principal $\R$-bundle with $\psi_\nu$ as the associated principal action.

\begin{theorem}\label{theorem: reduction mu}
    Let $\mu: T^*Q \to V^*\Pi$ be the principal $\R$-bundle with $\psi: \R \times T^*Q \to T^*Q$ the associated principal action. Then, the reduced map $\mu_\nu: (T^*Q)_\nu \to (V^*\Pi)_\nu$ of $\mu$ is a principal $\R$-bundle with principal action given by the reduced action $\psi_\nu : \R \times (T^*Q)_\nu \to (T^*Q)_\nu$ of $\psi$.
\end{theorem}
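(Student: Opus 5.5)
To prove that $\mu_\nu$ is a principal $\R$-bundle with principal action $\psi_\nu$, we establish three facts. First, $\psi_\nu$ is free. Second, $\psi_\nu$ is proper, or equivalently there are local trivializations. Third, the orbits of $\psi_\nu$ are exactly the fibers of $\mu_\nu$, and $\mu_\nu$ is a surjective submersion. The most economical route is to produce, locally on $(V^*\Pi)_\nu$, an equivariant trivialization $(T^*Q)_\nu \supseteq \mu_\nu^{-1}(W) \cong \R\times W$. This gives all properties at once.

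\textbf{Step 1: $\mu_\nu$ is a surjective submersion.} By (\ref{eq:mu_nu o pi_nu = pi_nu^V o mu}), $\mu_\nu\circ\pi_\nu = \pi_\nu^V\circ\mu_{\vert J^{-1}(\nu)}$. We use that $\mu_{\vert J^{-1}(\nu)}\colon J^{-1}(\nu)\to (J^V)^{-1}(\nu)$ is a surjective submersion: since $J=J^V\circ\mu$, we have $J^{-1}(\nu)=\mu^{-1}((J^V)^{-1}(\nu))$, so it is the restriction of a principal bundle to a submanifold of the base. Since $\pi_\nu^V$ is also a surjective submersion, the right-hand side is a surjective submersion. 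Because $\pi_\nu$ is a surjective submersion, $\mu_\nu$ is smooth and is a surjective submersion.

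\textbf{Step 2: fibers of $\mu_\nu$ are $\psi_\nu$-orbits.} Compatibility is immediate: $\mu\circ\psi_s=\mu$ and (\ref{eq:(psi_nu)_s o pi_nu = pi_nu o psi_s}) give $\mu_\nu\circ(\psi_\nu)_s=\mu_\nu$. Conversely, suppose $\mu_\nu(\pi_\nu(\alpha))=\mu_\nu(\pi_\nu(\alpha'))$. Then $\mu(\alpha')=V^*_g\phi(\mu(\alpha))=\mu(T^*_g\phi(\alpha))$ for some $g\in G_\nu$. Hence $\alpha'=\psi_s(T^*_g\phi(\alpha))$ for some $s$, and therefore $\pi_\nu(\alpha')=(\psi_\nu)_s(\pi_\nu(\alpha))$, using Proposition~\ref{prop: T*phi and psi commute}.

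\textbf{Step 3: freeness and trivializations (the main point).} For freeness, suppose $(\psi_\nu)_s(\pi_\nu(\alpha))=\pi_\nu(\alpha)$. Then $\alpha+s\eta(q)=T_g^*\phi(\alpha)$ for some $g\in G_\nu$, which forces $\phi_g(q)=q$. Since $\phi$ is free, $g=e$, and then $s\eta(q)=0$ gives $s=0$ because $\eta$ never vanishes ($\Pi$ is a submersion).

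For local triviality, use the $G$-invariant function $F_h$ instead of proving properness directly. By (\ref{eq: Fh o psi_s = Fh +s}) and Proposition~\ref{prop:F_h G-invariant}, $F_h$ restricts to $J^{-1}(\nu)$ and descends to a smooth function $f_\nu$ on $(T^*Q)_\nu$ with $f_\nu\circ(\psi_\nu)_s=f_\nu+s$. Likewise, the $G$-equivariant section $h$ maps $(J^V)^{-1}(\nu)$ into $J^{-1}(\nu)$, since $J\circ h=J^V$. It is $G_\nu$-equivariant, so it descends to a global section $h_\nu$ of $\mu_\nu$ satisfying $f_\nu\circ h_\nu=0$.

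Then define
\[
(s,\beta)\mapsto(\psi_\nu)_s(h_\nu(\beta)), \qquad \R\times(V^*\Pi)_\nu\to(T^*Q)_\nu .
\]
This map is smooth and equivariant. Its inverse is $x\mapsto(f_\nu(x),\mu_\nu(x))$, which follows from Step 2 together with the identity $f_\nu((\psi_\nu)_s h_\nu(\beta))=s$. This is a global trivialization, so $\mu_\nu$ is a (trivial) principal $\R$-bundle.

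The subtle points are verifying that $h$ really lands in $J^{-1}(\nu)$ and that the descended maps are smooth. Both reduce to the identity $J\circ h=J^V\circ\mu\circ h=J^V$ and to the submersion properties of $\pi_\nu$ and $\pi_\nu^V$.
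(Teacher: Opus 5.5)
Your proof is correct, but it distributes the work differently from the paper. The paper proves only two facts. First, $\psi_\nu$ is free: it pushes a fixed point down by $\mu$, uses freeness of $V^*\phi$ to get $g=e$, and then uses freeness of $\psi$. Second, the fibres of $\mu_\nu$ are the $\psi_\nu$-orbits, which is exactly your Step 2. It then concludes directly that $\mu_\nu$ is a principal $\R$-bundle. This leaves implicit that $\mu_\nu$ is a submersion (the paper records this only for $\mu_{\vert J^{-1}(\nu)}$) and that the action is locally trivial. Local triviality does follow, by flowing a local section of $\mu_\nu$ with $\psi_\nu$, but the paper does not argue it.

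Your Step 1 makes the submersion property explicit. Your freeness argument is a slightly more elementary variant: comparing base points forces $\phi_g(q)=q$, and then $\eta(q)\neq 0$ gives $s=0$. Your Step 3 supplies the missing triviality with an explicit global equivariant trivialization $(s,\beta)\mapsto(\psi_\nu)_s(h_\nu(\beta))$, whose inverse is $x\mapsto (f_\nu(x),\mu_\nu(x))$. The paper constructs $h_\nu$ and $f_\nu=F_h^\nu$ only after this theorem (Proposition \ref{prop: section h_nu} and the proposition following it). Their construction does not use the theorem, so there is no circularity.

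What your route buys is a self-contained proof that also shows $\mu_\nu$ is globally trivial. It also makes the separate freeness step redundant, since freeness follows from the trivialization. What the paper's route buys is that it is purely structural. It uses only the $G$-equivariance of $\mu$, the commutation of $\psi$ with $T^*\phi$, and $J=J^V\circ\mu$, and never invokes the equivariant section $h$ of Assumption \ref{assum: h equivariant} directly.
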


\begin{proof}
    First, we will prove that $\psi_\nu$ is a free action. Indeed, suppose that
    $$(\psi_\nu)_s(\pi_\nu(\alpha_q))= \pi_\nu(\alpha_q),$$
    for some $\alpha_q\in J^{-1}(\nu)$. Then, using (\ref{eq:(psi_nu)_s o pi_nu = pi_nu o psi_s}), we have 
    $$\pi_\nu(\psi_s(\alpha_q))= (\psi_\nu)_s(\pi_\nu(\alpha_q)) = \pi_\nu(\alpha_q).$$
    Thus, from Proposition \ref{prop: T*phi and psi commute}, there exists $g\in G_\nu$ such that
    $$\psi_s(T_g^*\phi(\alpha_q))= T_g^*\phi(\psi_s(\alpha_q)) = \alpha_q.$$
    Now, taking into account Proposition \ref{prop: mu is G-equivariant} and the fact that $\mu \circ \psi_s = \mu$, it follows that
    $$\mu(\alpha_q)= \mu(\psi_s(T_g^*\phi(\alpha_q))) = \mu(T_g^*\phi(\alpha_q)) = V_g^*\phi(\mu(\alpha_q)).$$
    Finally, since $V^*\phi$ is free, we obtain that $g=e$. Thus, since $\psi_s(T_g^*\phi(\alpha_q))= \alpha_q$ and $\psi_s$ is free, we conclude that $s=0$.

    Next, we will see that the fibers of $\mu_\nu$ are precisely the orbits of the action $\psi_\nu$, that is, 
    \begin{equation}\label{eq:mu_nu o psi_nu = mu_nu}
        (\psi_\nu)_{\pi_\nu(\alpha_q)}(\R) = (\mu_\nu)^{-1}(\mu_\nu(\pi_\nu(\alpha_q))), \quad \text{for all} \quad \alpha_q\in J^{-1}(\nu).
    \end{equation}
    Indeed, let $\alpha_q \in J^{-1}(\nu)$ and $s\in \R$, by equations (\ref{eq:mu_nu o pi_nu = pi_nu^V o mu}) and (\ref{eq:(psi_nu)_s o pi_nu = pi_nu o psi_s}), and the fact that $\mu \circ \psi_s = \mu$, we have 
    \begin{align*}
        \mu_\nu((\psi_\nu)_s(\pi_\nu(\alpha_q))) & = \mu_\nu(\pi_\nu(\psi_s(\alpha_q))) = \pi_\nu^V(\mu(\psi_s(\alpha_q))) = \pi_\nu^V(\mu(\alpha_q)) = \mu_\nu(\pi_\nu(\alpha_q)).
    \end{align*} 

    This proves that 
    $$(\psi_\nu)_{\pi_\nu(\alpha_q)}(\R) \subseteq (\mu_\nu)^{-1}(\mu_\nu(\pi_\nu(\alpha_q))).$$

    Conversely, let $\gamma_q \in J^{-1}(\nu)$ such that
    $$\mu_\nu(\pi_\nu(\gamma_q)) = \mu_\nu(\pi_\nu(\alpha_q)).$$
    Then, from (\ref{eq:mu_nu o pi_nu = pi_nu^V o mu}), we deduce that 
    $$\pi_\nu^V(\mu(\gamma_q)) = \pi_\nu^V(\mu(\alpha_q)),$$
    which implies that there exists $g\in G_\nu$ satisfying 
    $$\mu(\gamma_q) = (V_g^*\phi)(\mu(\alpha_q)) = \mu((T_g^*\phi)(\alpha_q)),$$
    where the last equality follows using Proposition \ref{prop: mu is G-equivariant}. So, there exists $s\in \R$ such that 
    $$\gamma_q = \psi_s((T_g^*\phi)(\alpha_q)).$$
    Thus, from (\ref{eq:(psi_nu)_s o pi_nu = pi_nu o psi_s}), we conclude that
    $$\pi_\nu(\gamma_q) = \pi_\nu(\psi_s((T_g^*\phi)(\alpha_q))) = (\psi_\nu)_s(\pi_\nu((T_g^*\phi)(\alpha_q))) = (\psi_\nu)_s(\pi_\nu(\alpha_q)) =(\psi_\nu)_{\pi_\nu(\alpha_q)}(s).$$
    This ends the proof of (\ref{eq:mu_nu o psi_nu = mu_nu}). Consequently, $\mu_\nu$ is a principal $\R$-bundle with principal action $\psi_\nu$.
\end{proof}

Now, we have the principal $\R$-bundle $\mu_\nu :(T^*Q)_\nu \to (V^*\Pi)_\nu$. The following result describes how $\mu_\nu$ connects the presymplectic structures $\omega_h^\nu$ and $\Omega_h^\nu$, as well as the vector fields $X_\nu$ and $R_h^\nu$, on the presymplectic manifolds $(T^*Q)_\nu$ and $(V^*\Pi)_\nu$, respectively.

\begin{proposition}\label{prop: (mu_nu)*Omega_h^nu = omega_h^nu and X_nu is mu_nu projectable}
    Let $\mu_\nu: (T^*Q)_\nu \to (V^*\Pi)_\nu$ be the reduced principal $\R$-bundle with principal action $\psi_\nu$. If $\omega_h^\nu$ and $\Omega_h^\nu$ are the reduced presymplectic structures on $(T^*Q)_\nu$ and $(V^*\Pi)_\nu$, respectively, then, 
    \begin{itemize}
        \item[(i)] $\mu_\nu^*\Omega_h^\nu = \omega_h^\nu$.
        \item[(ii)] The vector field $X_\nu$ on $(T^*Q)_\nu$, given in (\ref{eq: X_F_h^omega_Q is pi_nu projectable over widetilde X_nu}), is $\mu_\nu$-projectable over the vector field $R_h^\nu$ on $(V^*\Pi)_\nu$, characterized by (\ref{eq: ker Omega_h^nu = R_h^nu}), i.e.,
        $$T_{\pi_\nu(\alpha_q)}\mu_\nu(X_\nu(\pi_\nu(\alpha_q))) = R_h^\nu(\mu_\nu(\pi_\nu(\alpha_q))), \quad \alpha_q \in J^{-1}(\nu).$$
    \end{itemize}
\end{proposition}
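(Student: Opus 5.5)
Both items follow by lifting everything to $J^{-1}(\nu)$ through the surjective submersion $\pi_\nu$, where the unreduced identities of Proposition \ref{prop:mu*(Omega_h)=omega_h Omega_h = h*(omega_h)} and Proposition \ref{prop:X_F_h^omega_Q is mu projectable} are available. The key relation is (\ref{eq:mu_nu o pi_nu = pi_nu^V o mu}), $\mu_\nu\circ\pi_\nu=\pi^V_\nu\circ\mu_{\vert J^{-1}(\nu)}$, together with (\ref{eq: i_nu^V o mu = mu o i_nu}).

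For (i), I will pull back by $\pi_\nu$ and compute
\[
\pi_\nu^*(\mu_\nu^*\Omega_h^\nu)=(\mu_{\vert J^{-1}(\nu)})^*(\pi_\nu^V)^*\Omega_h^\nu=(\mu_{\vert J^{-1}(\nu)})^*(\iota^V_\nu)^*\Omega_h=\iota_\nu^*\mu^*\Omega_h=\iota_\nu^*\omega_h=\pi_\nu^*\omega_h^\nu .
\]
The steps use, in order, (\ref{eq:(pi_nu^V)*Omega_h^nu = (i_nu^V)*Omega_h}), then (\ref{eq: i_nu^V o mu = mu o i_nu}), then Proposition \ref{prop:mu*(Omega_h)=omega_h Omega_h = h*(omega_h)}(ii), and finally (\ref{eq:(pi_nu)*omega_h^nu = (i_nu)*omega_h}). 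Since $\pi_\nu$ is a surjective submersion, $\pi_\nu^*$ is injective on forms, and this gives $\mu_\nu^*\Omega_h^\nu=\omega_h^\nu$.

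For (ii), I fix $\alpha_q\in J^{-1}(\nu)$. By (\ref{eq: X_F_h^omega_Q is pi_nu projectable over widetilde X_nu}),
\[
T\mu_\nu(X_\nu(\pi_\nu(\alpha_q)))=T(\mu_\nu\circ\pi_\nu)(X_{F_h}^{\omega_Q}(\alpha_q))=T\pi^V_\nu\big(T\mu(X_{F_h}^{\omega_Q}(\alpha_q))\big).
\]
Here $X_{F_h}^{\omega_Q}$ is tangent to $J^{-1}(\nu)$ by Proposition \ref{prop: reduced X_F_h^omega_Q}, so its image under $T\mu$ is tangent to $(J^V)^{-1}(\nu)$. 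By Proposition \ref{prop:X_F_h^omega_Q is mu projectable}(ii), $T\mu(X_{F_h}^{\omega_Q}(\alpha_q))=R_h(\mu(\alpha_q))$. Corollary \ref{corollary: reduction corank 1}(ii) states that the restriction of $R_h$ to $(J^V)^{-1}(\nu)$ is $\pi^V_\nu$-projectable onto $R_h^\nu$. Applying $T\pi^V_\nu$ therefore gives $R_h^\nu(\pi_\nu^V(\mu(\alpha_q)))=R_h^\nu(\mu_\nu(\pi_\nu(\alpha_q)))$, as required.

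The proof is mostly diagram chasing. The one point that needs care is that all the maps involved (restrictions, inclusions, and the induced $\mu_\nu$) are smooth and compatible on the submanifolds. Smoothness of $\mu_\nu$ follows from the universal property of the submersion $\pi_\nu$. Compatibility also requires that $\mu(J^{-1}(\nu))\subseteq (J^V)^{-1}(\nu)$, which holds because $J=J^V\circ\mu$.
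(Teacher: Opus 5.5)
Your proposal is correct and follows the paper's proof essentially step for step. For (i) you pull back along the surjective submersion $\pi_\nu$ and chain the same identities. For (ii) you run the same sequence of tangent maps through $\mu_\nu\circ\pi_\nu=\pi_\nu^V\circ\mu_{\vert J^{-1}(\nu)}$, the $\mu$-projectability of $X_{F_h}^{\omega_Q}$ onto $R_h$, and the $\pi_\nu^V$-projectability of $R_h$ onto $R_h^\nu$, just read in the opposite direction. Your remarks on tangency to the level sets and on the smoothness of $\mu_\nu$ make explicit points the paper leaves implicit.
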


\begin{proof}
    \begin{itemize}
        \item[\textit{(i)}] We know that the $2$-form $\omega_h^\nu$ is completely determined by the condition (\ref{eq:(pi_nu)*omega_h^nu = (i_nu)*omega_h}), i.e., 
        $$\pi_\nu^*(\omega_h^\nu) = \iota_\nu^*(\omega_h).$$
        Therefore, to prove that $\mu_\nu^*\Omega_h^\nu = \omega_h^\nu$, it is enough to show that
        $$\pi_\nu^*(\mu_\nu^*\Omega_h^\nu)= \iota_\nu^*(\omega_h).$$
        Now, using (\ref{eq:(pi_nu^V)*Omega_h^nu = (i_nu^V)*Omega_h}), (\ref{eq:mu_nu o pi_nu = pi_nu^V o mu}), (\ref{eq: i_nu^V o mu = mu o i_nu}) and Proposition \ref{prop:mu*(Omega_h)=omega_h Omega_h = h*(omega_h)}, it follows that 
        \begin{align*}
            \pi_\nu^*(\mu_\nu^*\Omega_h^\nu) & = (\mu_\nu \circ \pi_\nu)^*\Omega_h^\nu= (\pi_\nu^V \circ \mu)^*\Omega_h^\nu = \mu^*((\pi_\nu^V)^*\Omega_h^\nu) = \mu^*((\iota_\nu^V)^*\Omega_h) \\ & = (\iota_\nu^V \circ \mu)^*\Omega_h= (\mu \circ \iota_\nu)^*\Omega_h= \iota_\nu^*(\mu^*\Omega_h) = \iota_\nu^*(\omega_h).
        \end{align*}

        \item[\textit{(ii)}] Let $\alpha_q \in J^{-1}(\nu)$. Taking into account (\ref{eq:mu_nu o pi_nu = pi_nu^V o mu}), item \textit{(ii)} in Corollary \ref{corollary: reduction corank 1} and Propositions \ref{prop:X_F_h^omega_Q is mu projectable} and \ref{prop: reduced X_F_h^omega_Q}, we have 
        \begin{align*}
            R_h^\nu(\mu_\nu(\pi_\nu(\alpha_q))) & = R_h^\nu(\pi_\nu^V(\mu(\alpha_q))) = T_{\mu(\alpha_q)}\pi_\nu^V(R_h(\mu(\alpha_q)))
            \\ & = T_{\mu(\alpha_q)}\pi_\nu^V(T_{\alpha_q}\mu(X_{F_h}^{\omega_Q}(\alpha_q)))= T_{\alpha_q}(\pi_\nu^V \circ \mu)(X_{F_h}^{\omega_Q}(\alpha_q)) 
            \\ & = T_{\alpha_q}(\mu_\nu \circ \pi_\nu)(X_{F_h}^{\omega_Q}(\alpha_q))= T_{\pi_\nu(\alpha_q)}\mu_\nu(T_{\alpha_q}\pi_\nu(X_{F_h}^{\omega_Q}(\alpha_q)))
            \\ & = T_{\pi_\nu(\alpha_q)}\mu_\nu(X_\nu(\pi_\nu(\alpha_q))).
        \end{align*}
    \end{itemize}
\end{proof}

Furthermore, the following result shows that the section $h: V^*\Pi \to T^*Q$ of $\mu$ induces a section $h_\nu: (V^*\Pi)_\nu \to (T^*Q)_\nu$ of $\mu_\nu$.

\begin{proposition}\label{prop: section h_nu}
    The section $h: V^*\Pi \to T^*Q$ restricts to a $G_\nu$-equivariant map $h_{\vert (J^V)^{-1}(\nu)}:(J^V)^{-1}(\nu) \to J^{-1}(\nu)$. Then, 
    \begin{itemize}
        \item[(i)] The restriction $h_{|(J^V)^{-1}(\nu)}$ induces a section 
    $$h_\nu: (V^*\Pi)_\nu = (J^V)^{-1}(\nu)/G_\nu \longrightarrow (T^*Q)_\nu = J^{-1}(\nu)/G_\nu$$ characterized by 
    \begin{equation}\label{eq:h_nu o pi_nu^V = pi_nu o h}
        h_\nu \circ \pi_\nu^V = \pi_\nu \circ h.
    \end{equation}

        \item[(ii)] $h_\nu^*\omega_h^\nu = \Omega_h^\nu$.
        \item[(iii)] The vector field $R_h^\nu$ on $(V^*\Pi)_\nu$, given in (\ref{eq: ker Omega_h^nu = R_h^nu}), is $h_\nu$-projectable over the vector field $X_\nu$ on $(T^*Q)_\nu$, characterized by (\ref{eq: X_F_h^omega_Q is pi_nu projectable over widetilde X_nu}), i.e.,
        $$T_{\pi_\nu^V(\beta_q)}h_\nu(R_h^\nu(\pi_\nu^V(\beta_q))) = X_\nu(h_\nu(\pi_\nu^V(\beta_q))), \quad \text{for all} \quad \beta_q \in (J^V)^{-1}(\nu).$$
    \end{itemize}

\end{proposition}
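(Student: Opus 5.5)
First I will check that $h$ maps $(J^V)^{-1}(\nu)$ into $J^{-1}(\nu)$. Since $\mu\circ h=\mathrm{id}_{V^*\Pi}$ and $J=J^V\circ\mu$ (Proposition \ref{prop:J^V momentum map}), we get $J\circ h=J^V\circ\mu\circ h=J^V$. Hence $J(h(\beta_q))=\nu$ whenever $\beta_q\in (J^V)^{-1}(\nu)$, and $h$ restricts to a smooth map $(J^V)^{-1}(\nu)\to J^{-1}(\nu)$; smoothness holds because $J^{-1}(\nu)$ is an embedded submanifold (Proposition \ref{prop: regular value}). Equivariance with respect to $G_\nu$ follows directly from Assumption \ref{assum: h equivariant}, restricted to $G_\nu$.

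For (i), the composition $\pi_\nu\circ h_{|(J^V)^{-1}(\nu)}$ is constant on $G_\nu$-orbits by equivariance. Since $\pi_\nu^V$ is a surjective submersion, this composition descends to a unique smooth map $h_\nu$ satisfying (\ref{eq:h_nu o pi_nu^V = pi_nu o h}). To see that $h_\nu$ is a section, I will use (\ref{eq:mu_nu o pi_nu = pi_nu^V o mu}):
\[
\mu_\nu\circ h_\nu\circ\pi_\nu^V=\mu_\nu\circ\pi_\nu\circ h=\pi_\nu^V\circ\mu\circ h=\pi_\nu^V .
\]
Surjectivity of $\pi_\nu^V$ then gives $\mu_\nu\circ h_\nu=\mathrm{id}$.

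For (ii), I will avoid any computation. By Proposition \ref{prop: (mu_nu)*Omega_h^nu = omega_h^nu and X_nu is mu_nu projectable}(i) we have $\omega_h^\nu=\mu_\nu^*\Omega_h^\nu$, so
\[
h_\nu^*\omega_h^\nu=(\mu_\nu\circ h_\nu)^*\Omega_h^\nu=\Omega_h^\nu .
\]
As an alternative check, one could pull back by $\pi_\nu^V$ and use Proposition \ref{prop:mu*(Omega_h)=omega_h Omega_h = h*(omega_h)}(i) together with (\ref{eq:(pi_nu)*omega_h^nu = (i_nu)*omega_h}) and (\ref{eq:(pi_nu^V)*Omega_h^nu = (i_nu^V)*Omega_h}).

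For (iii), I will fix $\beta_q\in (J^V)^{-1}(\nu)$. Differentiating (\ref{eq:h_nu o pi_nu^V = pi_nu o h}) along $R_h(\beta_q)$, which is tangent to $(J^V)^{-1}(\nu)$ by Corollary \ref{corollary: reduction corank 1}(ii), gives
\[
T h_\nu\bigl(R_h^\nu(\pi_\nu^V(\beta_q))\bigr)=T h_\nu\bigl(T\pi_\nu^V(R_h(\beta_q))\bigr)=T\pi_\nu\bigl(T h(R_h(\beta_q))\bigr).
\]
Proposition \ref{prop: R_h h-projectable X_F_h^omega_Q} lets me replace $Th(R_h(\beta_q))$ by $X_{F_h}^{\omega_Q}(h(\beta_q))$, and Proposition \ref{prop: reduced X_F_h^omega_Q} then yields $X_\nu(\pi_\nu(h(\beta_q)))=X_\nu(h_\nu(\pi_\nu^V(\beta_q)))$, as required.

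The only mildly delicate point is the smoothness of $h_\nu$ and the legitimacy of differentiating the restricted maps. Both follow from the submanifold and submersion hypotheses of Corollaries \ref{corollary: reduction corank 2} and \ref{corollary: reduction corank 1}. In the case $\mathcal U\neq T^*Q$, the same argument applies after restricting to $\mathcal V$, which is legitimate because $h(\mathcal V)\subseteq\mathcal U$: $\mathcal U$ is $\R$-invariant and $\mathcal{V}=\mu(\mathcal{U})$.
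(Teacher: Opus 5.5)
Your proof is correct. Parts (i) and (iii) follow the paper's argument essentially step for step: the restriction via $J\circ h=J^V$, the descent through $\pi_\nu^V$, and the same chain of tangent maps using the $h$-relatedness of $R_h$ and $X_{F_h}^{\omega_Q}$ and the $\pi_\nu$-projectability of $X_{F_h}^{\omega_Q}$. You additionally verify $\mu_\nu\circ h_\nu=\mathrm{id}$, which the paper asserts by calling $h_\nu$ a section but never checks.

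The genuine difference is in (ii). The paper pulls back along $\pi_\nu^V$ and computes
\[
(\pi_\nu^V)^*(h_\nu^*\omega_h^\nu)=h^*\iota_\nu^*\omega_h=(\iota_\nu^V)^*h^*\omega_h=(\iota_\nu^V)^*\Omega_h .
\]
This uses three ingredients: the unreduced identity $h^*\omega_h=\Omega_h$, the intertwining $\iota_\nu\circ h=h\circ\iota_\nu^V$, and the uniqueness of $\Omega_h^\nu$ (injectivity of $(\pi_\nu^V)^*$ for a surjective submersion). You instead combine the already established identity $\mu_\nu^*\Omega_h^\nu=\omega_h^\nu$ with the section property you proved, obtaining $h_\nu^*\omega_h^\nu=(\mu_\nu\circ h_\nu)^*\Omega_h^\nu=\Omega_h^\nu$ in one line.

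Your route is shorter. It is the reduced analogue of deriving $h^*\omega_h=\Omega_h$ from $\omega_h=\mu^*\Omega_h$ and $\mu\circ h=\mathrm{id}$. It relies on the reduced bundle map $\mu_\nu$ and its pullback property having been set up first, and it genuinely needs the section property.

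The paper's route never uses $\mu_\nu$ or the fact that $h_\nu$ is a section. It only needs $h$ to be an equivariant map between the level sets with $h^*\omega_h=\Omega_h$, so it works directly from the two separate reduction theorems. Your closing remark that $h(\mathcal V)\subseteq\mathcal U$ is also right, since the fibres of $\mu$ are $\R$-orbits and $\mathcal U$ is $\R$-invariant.
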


\begin{proof}
    
First, we prove that the section $h: V^*\Pi \to T^*Q$ restricts to a map $h:(J^V)^{-1}(\nu) \to J^{-1}(\nu)$, in such a way that
\begin{equation}\label{eq: i_nu o h = h o i_nu^V}
    \iota_\nu \circ h = h \circ \iota_\nu^V.
\end{equation}
In fact, if $\beta \in (J^V)^{-1}(\nu)$ and $h(\beta_q) = \alpha_q$ then, as $h:V^*\Pi \to T^*Q$ is a section of $\mu:T^*Q\to V^*\Pi$, we have that $\mu(\alpha_q) = \beta_q$. Thus, using (\ref{eq: J=J^V o mu}), we deduce that 
$$\nu = J^V(\beta_q) = J^V(\mu(\alpha_q)) = J(\alpha_q),$$
and $\alpha_q \in J^{-1}(\nu)$.

In addition, the $G_\nu$-equivariance of the restriction $h_{\vert (J^V)^{-1}(\nu)}:(J^V)^{-1}(\nu) \to J^{-1}(\nu)$ follows from the equivariance of $h$. Then, the map $h_{|(J^V)^{-1}(\nu)}$ induces a map on the quotient spaces
$$h_\nu: (V^*\Pi)_\nu = (J^V)^{-1}(\nu)/G_\nu \longrightarrow (T^*Q)_\nu = J^{-1}(\nu)/G_\nu$$ characterized by 
$$h_\nu \circ \pi_\nu^V = \pi_\nu \circ h.$$

Now, we will prove \textit{(ii)}. We know that the presymplectic structure $\Omega_h^\nu$ is completely characterized by condition (\ref{eq:(pi_nu^V)*Omega_h^nu = (i_nu^V)*Omega_h}), i.e.,
$$(\pi_\nu^V)^*\Omega_h^\nu = (\iota_\nu^V)^*\Omega_h.$$ Therefore, to prove \textit{(ii)}, it suffices to show that $$(\pi_\nu^V)^*(h_\nu^*\omega_h^\nu) = (\iota_\nu^V)^*\Omega_h.$$ But, using (\ref{eq:(pi_nu)*omega_h^nu = (i_nu)*omega_h}), (\ref{eq:h_nu o pi_nu^V = pi_nu o h}), (\ref{eq: i_nu o h = h o i_nu^V}) and the first item in Proposition \ref{prop:mu*(Omega_h)=omega_h Omega_h = h*(omega_h)}, we have
\begin{align*}
    (\pi_\nu^V)^*(h_\nu^*\omega_h^\nu) & = (h_\nu \circ \pi_\nu^V)^*(\omega_h^\nu)  = (\pi_\nu \circ h)^*(\omega_h^\nu) = h^*(\pi_\nu^*(\omega_h^\nu)) = h^*(\iota_\nu^*(\omega_h))
    \\ & = (\iota_\nu \circ h)^*(\omega_h) = (h \circ \iota_\nu^V)^*(\omega_h)= (\iota_\nu^V)^*(h^*\omega_h) = (\iota_\nu^V)^*\Omega_h.
\end{align*}

Finally, we will prove \textit{(iii)}. If $\beta_q \in (J^V)^{-1}(\nu)$ then, using (\ref{eq:h_nu o pi_nu^V = pi_nu o h}), item \textit{(ii)} in Corollary \ref{corollary: reduction corank 1} and Propositions \ref{prop: R_h h-projectable X_F_h^omega_Q} and \ref{prop: reduced X_F_h^omega_Q}, we conclude
\begin{align*}
    T_{\pi_\nu^V(\beta_q)}h_\nu(R_h^\nu(\pi_\nu^V(\beta_q))) & = T_{\pi_\nu^V(\beta_q)}h_\nu(T_{\beta_q}\pi_\nu^V(R_h(\beta_q))) = T_{\beta_q}(h_\nu \circ \pi_\nu^V)(R_h(\beta_q))
    \\ & = T_{\beta_q}(\pi_\nu \circ h)(R_h(\beta_q)) = T_{h(\beta_q)}\pi_\nu(T_{\beta_q}h(R_h(\beta_q))) 
    \\ & = T_{h(\beta_q)}\pi_\nu(X_{F_h}^{\omega_Q}(h(\beta_q))) = X_\nu(\pi_\nu(h(\beta_q))) = X_\nu(h_\nu(\pi_\nu^V(\beta_q))).
\end{align*}

\end{proof}

Analogously to Proposition \ref{prop:function F_h}, we obtain the following result. 

\begin{proposition}
    Let $h_\nu: (V^*\Pi)_\nu \to (T^*Q)_\nu$ the section of $\mu_\nu$ induced by the Hamiltonian section $h$ of $\mu$. Then, there exists a unique function $F_h^\nu:(T^*Q)_\nu\to\R$ characterized by
    \begin{equation}\label{eq: F_h^nu o pi_nu = F_h o i_nu}
        F_h^\nu \circ \pi_\nu = F_h \circ \iota_\nu.  
    \end{equation}
    Moreover, $F_h^\nu$ is homogeneous of degree $1$ with respect to $\psi_\nu$, i.e., $R_\nu(F_h^\nu)=1$, where $R_\nu$ denotes the infinitesimal generator of the action $\psi_\nu$.
\end{proposition}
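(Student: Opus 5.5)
The plan is to construct $F_h^\nu$ by passing $F_h\circ\iota_\nu$ to the quotient, and then to obtain homogeneity by projecting the identity $R(F_h)=1$ along $\pi_\nu$.

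\emph{Existence and uniqueness.} The function $F_h\circ\iota_\nu\colon J^{-1}(\nu)\to\R$ is $G_\nu$-invariant. This follows from Assumption \ref{assum: h equivariant} together with Proposition \ref{prop:F_h G-invariant}, which give $F_h\circ T_g^*\phi=F_h$ for every $g\in G$, and in particular for $g\in G_\nu$. Since $\pi_\nu\colon J^{-1}(\nu)\to (T^*Q)_\nu$ is a surjective submersion whose fibers are exactly the $G_\nu$-orbits, $F_h\circ\iota_\nu$ descends to a unique function $F_h^\nu$ with $F_h^\nu\circ\pi_\nu=F_h\circ\iota_\nu$. Surjectivity of $\pi_\nu$ gives uniqueness. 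Because $\pi_\nu$ is a submersion, it admits local sections, and composing with them shows that $F_h^\nu$ is smooth.

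\emph{Homogeneity.} Take $\alpha_q\in J^{-1}(\nu)$. By Proposition \ref{prop: reduced psi}, $R$ is tangent to $J^{-1}(\nu)$ and satisfies $T_{\alpha_q}\pi_\nu(R(\alpha_q))=R_\nu(\pi_\nu(\alpha_q))$. Differentiating the defining relation of $F_h^\nu$ then gives
\[
R_\nu(F_h^\nu)(\pi_\nu(\alpha_q))=dF_h^\nu\big(T_{\alpha_q}\pi_\nu(R(\alpha_q))\big)=d(F_h\circ\iota_\nu)(R(\alpha_q))=R(F_h)(\alpha_q)=1,
\]
where the last equality is the homogeneity of $F_h$ from Proposition \ref{prop:function F_h}. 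Since $\pi_\nu$ is surjective, $R_\nu(F_h^\nu)=1$ on all of $(T^*Q)_\nu$.

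The same argument also gives the integrated version: $F_h^\nu\circ(\psi_\nu)_s=F_h^\nu+s$. This comes from \eqref{eq: Fh o psi_s = Fh +s} together with \eqref{eq:(psi_nu)_s o pi_nu = pi_nu o psi_s}.

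Finally, I would note the compatibility with $h_\nu$. Using \eqref{eq:h_nu o pi_nu^V = pi_nu o h} and \eqref{eq: F_h o h = 0}, we get $F_h^\nu\circ h_\nu=0$. So $F_h^\nu$ vanishes on the image of $h_\nu$ and is the function associated to the section $h_\nu$, in the sense of Proposition \ref{prop:function F_h}, for the principal $\R$-bundle $\mu_\nu$ of Theorem \ref{theorem: reduction mu}.

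The only point requiring care is the tangency of $R$ to $J^{-1}(\nu)$, which is already established in Proposition \ref{prop: reduced psi}. I therefore expect no real obstacle.
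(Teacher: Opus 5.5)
Your proof is correct and follows the paper's own argument. You descend $F_h\circ\iota_\nu$ to the quotient using the $G$-invariance of $F_h$ (Proposition \ref{prop:F_h G-invariant}), and you then obtain $R_\nu(F_h^\nu)\circ\pi_\nu=R(F_h)=1$ from the $\pi_\nu$-relatedness of $R$ and $R_\nu$. Your extra remarks are also correct and go a little beyond what the paper writes out: smoothness via local sections of $\pi_\nu$, the integrated identity $F_h^\nu\circ(\psi_\nu)_s=F_h^\nu+s$, and $F_h^\nu\circ h_\nu=0$.
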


\begin{proof}
    From Proposition \ref{prop:F_h G-invariant}, the $G$-invariant function $F_h: T^*Q \to \R$ induces a reduced function $F_h^\nu: (T^*Q)_\nu \to \R$ characterized by $F_h^\nu \circ \pi_\nu = F_h \circ \iota_\nu$.

    Finally, let $\pi_\nu(\alpha_q)\in (T^*Q)_\nu$ with $\alpha_q\in J^{-1}(\nu)$, taking into account (\ref{eq: R is pi_nu projectable over R_nu}), (\ref{eq: F_h^nu o pi_nu = F_h o i_nu}) and Proposition \ref{prop:function F_h}, we conclude that 
    $$R_\nu(F_h^\nu)(\pi_\nu(\alpha_q)) = T_{\alpha_q}\pi_\nu(R(\alpha_q))(F_h^\nu) = R(F_h^\nu \circ \pi_\nu)(\alpha_q) = R(F_h)(\alpha_q) = 1.$$
\end{proof}

The following diagram summarizes all the reduction processes. 
$$\xymatrix{
\left(T^*Q,\,\omega_h,\, X_{F_h}^{\omega_Q}\right) \ar[dd]^{\mu} \ar@(u,ur)[]^{(\psi, R)} & (J^{-1}(\nu),\, \iota_\nu^*\omega_h,\, {X_{F_h}^{\omega_Q}}_{{\vert J^{-1}(\nu)}})  \ar[r]^{\hspace{20pt}\pi_\nu} \ar@{_{(}->}[l]_{\hspace{-20pt}\iota_\nu} \ar[dd]^{\mu_{\vert J^{-1}(\nu)}} \ar@(u,ur)[]^{(\psi, R_{{\vert J^{-1}(\nu)}})} & \left((T^*Q)_\nu,\, \omega_h^\nu,\, X_\nu\right)  \ar[dd]^{\mu_\nu} \ar@(u,ur)[]^{(\psi_\nu, R_\nu)}
\\ \\ \left(V^*\Pi,\,\Omega_h, \, R_h\right) \ar@/^2pc/[uu]^{h} & ((J^V)^{-1}(\nu),\, (\iota_\nu^V)^*\Omega_h, \, {R_h}_{\vert (J^V)^{-1}(\nu)}) \ar@/^2pc/[uu]^{h_{\vert(J^V)^{-1}(\nu)}} \ar@{_{(}->}[l]_{\hspace{-40pt}\iota_\nu^V} \ar[r]^{\hspace{40pt}\pi_\nu^V} & \left((V^*\Pi)_\nu,\, \Omega_h^\nu,\, R_h^\nu\right) \ar@/^2pc/[uu]^{h_\nu}
}$$

\begin{remark}
    Let $\mathcal{U}$ be the open $\mathbb{R}$-invariant (and $G$-invariant) subset of $T^*Q$ satisfying (\ref{eq: transversal condition T*Q}) and $\mathcal{V} = \mu(\mathcal{U})$ the open ($G$-invariant) subset of $V^*\Pi$ such that $\mu_{\vert \mathcal{U}}: \mathcal{U} \to \mathcal{V}$ is a principal $\R$-bundle with the associated principal action given by the restriction $\psi_{\vert \R \times \mathcal{U}}: \R \times \mathcal{U} \to \mathcal{U}$ of $\psi$ to $\R \times \mathcal{U}$. 
    
    If the assumption of Remark \ref{remark: U = T*Q} is not satisfied, that is, if $\mathcal{U}\neq T^*Q$, then the reduced principal $\R$-bundle $\mu_\nu$ given in Theorem \ref{theorem: reduction mu} and its section $h_\nu$ given in Proposition \ref{prop: section h_nu} are replaced by $$\mu_\nu: \mathcal{U}_\nu \to \mathcal{V}_\nu \qquad \text{and} \qquad  h_\nu : \mathcal{V}_\nu \to \mathcal{U}_\nu,$$ where $\mathcal{U}_\nu \subset (T^*Q)_\nu$ and $\mathcal{V}_\nu \subset (V^*\Pi)_\nu$ are the open subsets given in Remarks \ref{remark: reduction U} and \ref{remark: reduction V}, respectively. In addition, the reduced principal action associated with $\mu_\nu$ is the restriction ${\psi_\nu}_{\vert \R \times \mathcal{U}_\nu}: \R \times \mathcal{U}_\nu \to \mathcal{U}_\nu$ to $\R \times \mathcal{U}_\nu$ of the principal action $\psi_\nu$ given by (\ref{eq: action psi_nu}).
    
\end{remark}

Finally, we continue the discussion of Example \ref{ex: presympl reduc} (see also Example \ref{ex: mechan presympl reduc}). 
 
\begin{example}[$N$-dimensional harmonic oscillator seen by an observer that moves with a constant velocity]
    Continuing with Example \ref{ex: presympl reduc} (see also Example \ref{ex: mechan presympl reduc}), we have the reduced spaces $(\mathcal{N}, \omega_h^\nu)$ and $(\mathcal{N}^V, \Omega_h^\nu)$ of the presymplectic manifolds $(\R^{2N+2}, \omega_h)$ and $(\R^{2N+1},\Omega_h)$, respectively. Thus, from the expressions of $\mu$ and $h$, given in (\ref{ex: mu}) and (\ref{ex: h}), respectively, the reduced principal $\R$-bundle $\mu_\nu : \mathcal{N} \to \mathcal{N}^V$ and its reduced section $h_\nu:\mathcal{N}^V \to \mathcal{N}$ are given by
    $$\mu_\nu(q^\alpha,t,p_i,p)=(q^\alpha,t,p_i),$$
    and 
    $$h_\nu(q^\alpha,t,p_i) = \left(q^\alpha,t,p_i, -\frac{1}{2m}p_i^2 - \frac{1}{2}m\Omega^2\left( (vt)^2 + (q^\alpha)^2\right)\right).$$

\end{example}

\section{Example: Time-dependent Elroy's Beanie}\label{sec: example elroy}

Consider a time-dependent version of Elroy’s Beanie (for the classical Elroy’s Beanie, see, for instance, \cite{MR2977681}), consisting of two rigid bodies moving in the plane,  connected by an ideal frictionless hinge (with no torsional spring acting at the joint) and subject to an external interaction whose preferred direction rotates uniformly in the plane with constant angular velocity $\w$. This rotating interaction exerts a torque on one of the bodies, tending to align it with the instantaneous direction of the external field. As a consequence, although the kinetic energy of the system remains time-independent, the potential energy depends explicitly on time through the relative angle between the body and the rotating external direction.

Therefore, the configuration space for the system is $Q = SE(2) \times S^1 \times \R$, where $SE(2)$ is the special Euclidean Lie group, $S^1$ denotes the circle and $\R$ determines the time. Let $(\theta,x,y)$ be local coordinates on $SE(2)$ such that $(x,y)$ is the position of the common hinge point and $\theta$ is the rotation angle of the first rigid with respect to the fixed axis of the inertial reference frame. The angle $\varphi$ is  the local coordinate on $S^1$, which determines the rotation angle of the second rigid with respect to the fixed axis of the inertial reference frame, and $t$ the coordinate on $\R$ (see Figure \ref{fig:elroy}).

\begin{figure}[ht]
    \centering
    \includegraphics[scale=0.17]{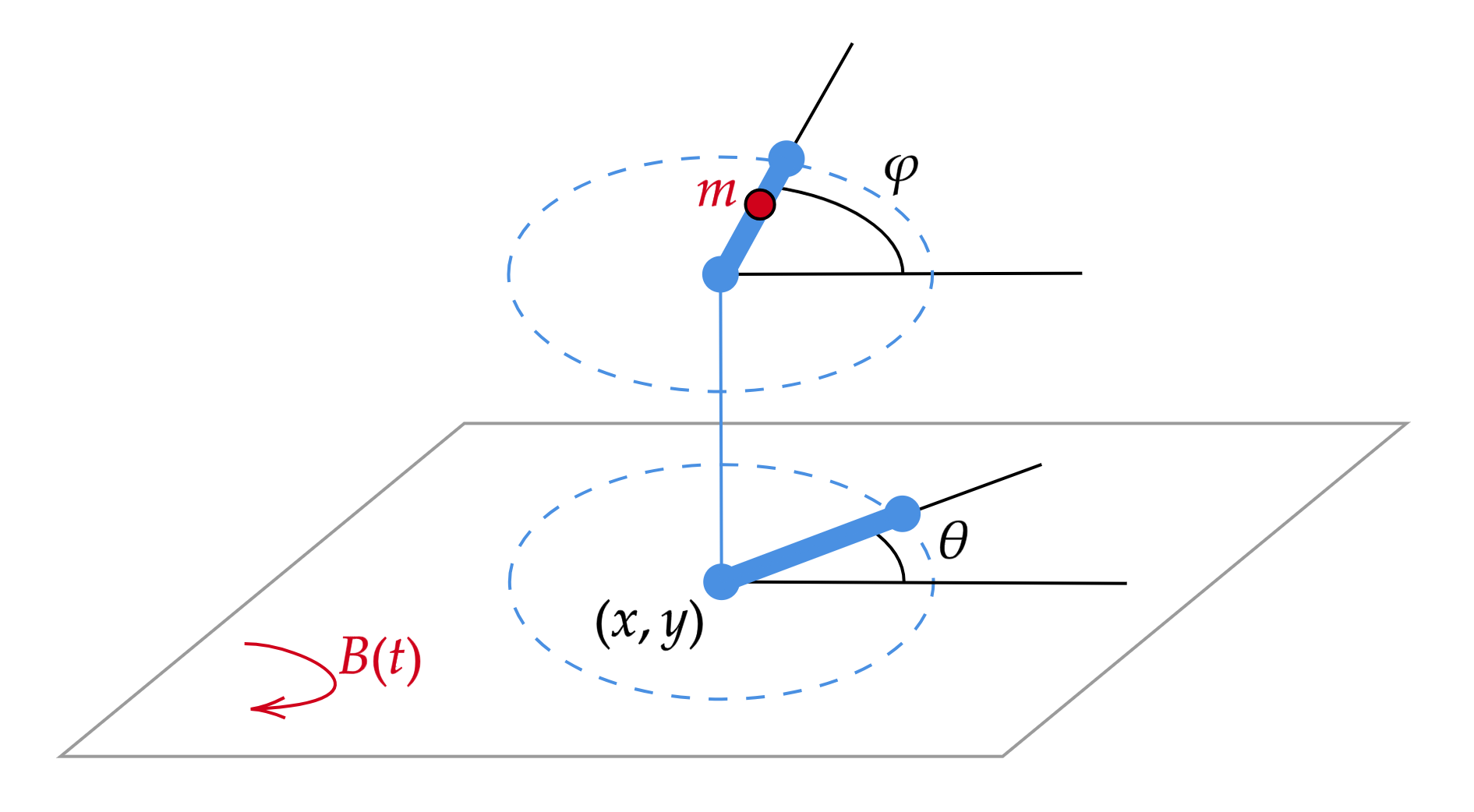}
    \caption{Time-dependent Elroy's Beanie}\label{fig:elroy}
\end{figure}

\begin{remark}
For the purposes of this paper, we adopt a different interpretation for the coordinate $\varphi$ from that used in \cite{MR2977681}. More precisely, in \cite{MR2977681}, $\varphi$ denotes the relative rotation of the second body with respect to the first.
\end{remark}

\begin{remark}
As a particular example of this kind of systems, suppose that a magnetic dipole ${\bf m}$ is rigidly attached to one of the bodies and that the system is subjected to a uniform external magnetic field ${\bf B}$ rotating in the plane with constant angular velocity $\w$. In this case, 
$${\bf B}(t)=B_0(\cos (t\w),-\sin(t\w))$$
and $${\bf m}(\varphi)=m_d(\cos\varphi,\sin\varphi)$$
where $B_0, m_d>0$ are the constant magnitudes of the magnetic and dipolar fields, respectively (see Figure \ref{fig:elroy}).

The rotational effect is described by
$$
{\boldsymbol{\tau}}=\mathbf{m}\times\mathbf{B},
$$
where $\boldsymbol{\tau}$ is the torque (or moment of force) exerted by the magnetic field on the dipole.

Since the dipole is rigidly attached to the body, this torque also acts on the body. Consequently, the interaction energy depends on the relative angle between the dipole and the rotating magnetic field, giving rise to an explicitly time-dependent potential
$$V(\varphi +t\w)={\bf m}(\varphi)\cdot {\bf B}(t)=m_dB_0 \cos(\varphi +t\w).$$
\end{remark}

Now, we return to the general case. For the system, the fibration $\Pi: SE(2) \times S^1 \times \R \to \R$ is the projection on $\R$. 

If we consider the local coordinates $(\theta,x,y,\varphi)$ on $SE(2)\times S^1$ and the corresponding coordinates $(\theta, x,y, \varphi, p_{\theta}, p_x,p_y, p_{\varphi})$  of $T^*(SE(2) \times S^1)$, then we have that the time-dependent Hamiltonian function $H: T^*(SE(2) \times S^1) \times \R \to \R$ is given by
$$H(\theta,x,y,\varphi,t, p_\theta, p_x,p_y, p_\varphi) = \frac{1}{2m}(p_x^2+ p_y^2) + \frac{1}{2I_1}p_\theta^2 + \frac{1}{2I_2}p_\varphi^2 + V(\varphi + t\w),$$
where $I_1$ and $I_2$ denote the moments of inertia and $V: S^1 \to \R$ is a potential function.

Moreover, using (\ref{eq: local mu and psi}) and (\ref{eq: local h}), we know that the principal $\R$-bundle $\mu: T^*(SE(2) \times S^1 \times \R) \to T^*(SE(2) \times S^1) \times \R$ and its section $h: T^*(SE(2) \times S^1) \times \R \to T^*(SE(2) \times S^1 \times \R)$ are given by
\begin{equation}\label{ex2: mu}
    \mu(\theta,x,y,\varphi,t, p_\theta, p_x,p_y, p_\varphi, p_t) = (\theta,x,y,\varphi,t, p_\theta, p_x,p_y, p_\varphi),
\end{equation}
and 
\begin{equation}\label{ex2: h}
    \begin{split}
        h(\theta,x,y,\varphi,t, p_\theta, p_x,p_y, p_\varphi) = & ( \theta,x,y,\varphi,t, p_\theta, p_x,p_y, p_\varphi, \\ & -\tfrac{1}{2m}(p_x^2+ p_y^2) - \tfrac{1}{2I_1}p_\theta^2 - \tfrac{1}{2I_2}p_\varphi^2 - V(\varphi + t\w)).
    \end{split}
\end{equation}
Here $(t,p_t)$ are the corresponding coordinates on $T^*\R$. Then, from (\ref{eq: local F_h}), we have that the extended Hamiltonian function $F_h: T^*(SE(2) \times S^1 \times \R) \to \R$ is 
$$F_h(\theta,x,y,\varphi,t, p_\theta, p_x,p_y, p_\varphi, p_t) = p_t + \frac{1}{2m}(p_x^2+ p_y^2) + \frac{1}{2I_1}p_\theta^2 + \frac{1}{2I_2}p_\varphi^2 + V(\varphi + t\w).$$

\subsection{Presymplectic Hamiltonian Systems}\label{subsection: ex Presymplectic Hamiltonian Systems}

In what follows, we will describe the structures of the Hamiltonian presymplectic system on the extended phase space of momenta $T^*(SE(2) \times S^1 \times \R)$ and on the phase space of momenta $T^*(SE(2) \times S^1) \times \R$.

\begin{itemize}
    \item \textbf{Presymplectic structures on $T^*(SE(2) \times S^1 \times \R)$ and $T^*(SE(2) \times S^1) \times \R$.}

    According to (\ref{eq: coordinates omega_h and coordinates Omega_h}), the presymplectic structure $\Omega_h$ on $T^*(SE(2) \times S^1) \times \R$ is given by
\begin{equation}\label{ex2: Omega_h}
    \begin{split}
        \Omega_h = & d\theta\wedge dp_\theta + dx\wedge dp_x +dy\wedge dp_y + d\varphi\wedge dp_\varphi + \frac{dV}{d\varphi}(\varphi+t\w)d\varphi \wedge dt \\ & + \frac{1}{I_1}p_\theta dp_\theta \wedge dt + \frac{1}{m}p_x dp_x \wedge dt + \frac{1}{m}p_y dp_y \wedge dt + \frac{1}{I_2}p_\varphi dp_\varphi \wedge dt
    \end{split}
\end{equation}
and 
\begin{equation}\label{ex2: omega_h}
    \omega_h = \mu^*\Omega_h,
\end{equation}
has the same expression on $T^*(SE(2) \times S^1 \times \R)$.

\item \textbf{Dynamics on $T^*(SE(2) \times S^1 \times \R)$ and $T^*(SE(2) \times S^1) \times \R$.}

Following (\ref{eq: ker Omega_h = R_h}) and Proposition \ref{prop:ker omega_h}, we have $\ker \omega_h = \langle R, X_{F_h}^{\omega_Q}\rangle$ and $\ker \Omega_h = \langle R_h \rangle$. In this example, from (\ref{eq:coordinates X_F_h^omega_Q and R}) and (\ref{eq: R_h coordinates}), we obtain
\begin{equation}\label{ex2: X_F_H omegaQ y R}
    \begin{split}
        X_{F_h}^{\omega_Q} = & \frac{\partial}{\partial t} -\w \frac{dV}{d\varphi}(\varphi + t\w) \frac{\partial}{\partial p_t} + \frac{1}{I_1}p_\theta \frac{\partial}{\partial \theta} + \frac{1}{m}p_x \frac{\partial}{\partial x} +\frac{1}{m}p_y \frac{\partial}{\partial y} \\ & + \frac{1}{I_2}p_\varphi \frac{\partial}{\partial \varphi} - \frac{dV}{d\varphi}(\varphi + t\w) \frac{\partial}{\partial p_\varphi},
    \\ R =& \frac{\partial}{\partial p_t}
    \end{split}
\end{equation}
and 
\begin{equation}\label{ex2: R_h}
    \begin{split}
        R_h = & \frac{\partial}{\partial t} + \frac{1}{I_1}p_\theta \frac{\partial}{\partial \theta} + \frac{1}{m}p_x \frac{\partial}{\partial x} +\frac{1}{m}p_y \frac{\partial}{\partial y} + \frac{1}{I_2}p_\varphi \frac{\partial}{\partial \varphi} - \frac{dV}{d\varphi}(\varphi + t\w) \frac{\partial}{\partial p_\varphi}.
    \end{split}
\end{equation}

\item \textbf{Lie group actions on $T^*(SE(2) \times S^1 \times \R)$ and $T^*(SE(2) \times S^1) \times \R$.}

In what follows, we consider the Lie group $SE(2) \times \R$ and the free action $\phi: (SE(2) \times \R)\times (SE(2) \times S^1 \times \R) \to SE(2) \times S^1 \times \R$ of this Lie group on $SE(2) \times S^1 \times \R$ given by
\begin{equation}\label{ex2: phi}
    \begin{split}
        \phi(&(\theta_0, x_0, y_0, t_0),(\theta,x,y,\varphi,t)) = \\ = &( \theta_0 + \theta,\, x_0 + x \cos \theta_0 - y \sin \theta_0,\, y_0 + x \sin \theta_0 + y \cos \theta_0,\, \varphi - t_0\w,\, t+t_0).
    \end{split}
\end{equation}

It is clear that the $1$-form $\eta = \Pi^*(dt)$ is $(SE(2) \times \R)$-invariant and the cotangent lift $T^*\phi: (SE(2) \times \R)\times T^*(SE(2) \times S^1 \times \R) \to T^*(SE(2) \times S^1 \times \R)$ is 
\begin{equation}\label{ex2: T*phi}
    \begin{split}
        T^*\phi& ((\theta_0, x_0, y_0, t_0),(\theta,x,y,\varphi,t, p_\theta, p_x, p_y, p_\varphi,p_t)) = \\ = ( & \theta_0 + \theta,\, x_0 + x \cos \theta_0 - y \sin \theta_0,\, y_0 + x \sin \theta_0 + y \cos \theta_0,\, \varphi - t_0\w,\, t+t_0, \\ & p_\theta,\, p_x \cos\theta_0 - p_y\sin\theta_0,\, p_x\sin\theta_0 + p_y \cos\theta_0,\, p_\varphi,\, p_t).
    \end{split}
\end{equation}

The induced action $V^*\phi: (SE(2) \times \R)\times T^*(SE(2) \times S^1) \times \R \to T^*(SE(2) \times S^1) \times \R$ is described by 
\begin{equation}\label{ex2: V*phi}
    \begin{split}
        V^*\phi& ((\theta_0, x_0, y_0, t_0),(\theta,x,y,\varphi,t, p_\theta, p_x, p_y, p_\varphi)) = \\ = ( & \theta_0 + \theta,\, x_0 + x \cos \theta_0 - y \sin \theta_0,\, y_0 + x \sin \theta_0 + y \cos \theta_0,\, \varphi - t_0\w,\, t+t_0, \\ & p_\theta,\, p_x \cos\theta_0 - p_y\sin\theta_0,\, p_x\sin\theta_0 + p_y \cos\theta_0,\, p_\varphi).
    \end{split}
\end{equation}

From Remark \ref{remark: T*phi free and proper}, we have that the actions $T^*\phi$ and $V^*\phi$ are free too. 

\item \textbf{Momentum maps.}

Let $\mathfrak{se}(2) \times \R$ be the Lie algebra of $SE(2)\times \R$. The fundamental vector fields of $\xi = (\xi_1, \xi_2, \xi_3, \xi_4)\in \mathfrak{se}(2) \times \R$ associated with the actions $T^*\phi$ and $V^*\phi$, respectively, are
\begin{equation}\label{ex2: xi T*phi}
    \begin{split}
        \xi_{T^*(SE(2)\times S^1\times \R)} = & \xi_1\frac{\partial}{\partial \theta} + (\xi_2-\xi_1y)\frac{\partial}{\partial x} + (\xi_3+\xi_1x)\frac{\partial}{\partial y} \\ &-\xi_1 p_y \frac{\partial}{\partial p_x} +\xi_1 p_x \frac{\partial}{\partial p_y} - \xi_4 \w \frac{\partial}{\partial \varphi} + \xi_4 \frac{\partial}{\partial t}
    \end{split}
\end{equation}
and 
\begin{equation}\label{ex2: xi V*phi}
    \begin{split}
        \xi_{T^*(SE(2)\times S^1)\times \R} = & \xi_1\frac{\partial}{\partial \theta} + (\xi_2-\xi_1y)\frac{\partial}{\partial x} + (\xi_3+\xi_1x)\frac{\partial}{\partial y} \\ &-\xi_1 p_y \frac{\partial}{\partial p_x} +\xi_1 p_x \frac{\partial}{\partial p_y} - \xi_4 \w \frac{\partial}{\partial \varphi} + \xi_4 \frac{\partial}{\partial t}.
    \end{split}
\end{equation}

As a consequence, according to (\ref{ex2: T*phi}), (\ref{ex2: V*phi}), (\ref{ex2: xi T*phi}) and (\ref{ex2: xi V*phi}), straightforward computations show that the $Ad^*$-equivariant momentum maps on the presymplectic manifolds $(T^*(SE(2)\times S^1\times \R),\omega_h)$ and $(T^*(SE(2)\times S^1)\times \R, \Omega_h)$, respectively, are
\begin{equation}\label{ex2: J}
    \begin{split}
        J(\theta,x,y,&\varphi,t, p_\theta, p_x, p_y, p_\varphi,p_t) = (p_\theta + xp_y -yp_x,\, p_x,\, p_y, \\ & -\tfrac{1}{2m}(p_x^2+ p_y^2) - \tfrac{1}{2I_1}p_\theta^2 - \tfrac{1}{2I_2}p_\varphi^2 - \w p_\varphi - V(\varphi + t\w))
    \end{split}
\end{equation}
and 
\begin{equation}\label{ex2: J^V}
    \begin{split}
        J^V(\theta,x,y,&\varphi,t, p_\theta, p_x, p_y, p_\varphi) = (p_\theta + xp_y -yp_x,\, p_x,\, p_y, \\ & -\tfrac{1}{2m}(p_x^2+ p_y^2) - \tfrac{1}{2I_1}p_\theta^2 - \tfrac{1}{2I_2}p_\varphi^2 - \w p_\varphi - V(\varphi + t\w)).
    \end{split}
\end{equation}

\end{itemize}

In order to guarantee the condition 
\begin{equation}\label{ex2: condition}
    \ker \omega_h(\alpha_q) \cap T_{\alpha_q}(G \cdot \alpha_q) = \{ 0 \},
\end{equation}
we consider the open subset $U$ of $S^1\times \R$ given by
$$U = \left\{(\varphi,t)\in S^1\times \R \,\left|\, \frac{dV}{dt}(\varphi + t\w) \neq 0\right. \right\}.$$
Note that $SE(2) \times U$ is $\phi$-invariant (see (\ref{ex2: phi})). Therefore, $\mathcal{U} = T^*(SE(2) \times U)$ and $\mathcal{V} = T^*(SE(2)) \times U\times \R$ are $T^*\phi$-invariant and $V^*\phi$-invariant, respectively. 

Moreover, if $\alpha_q\in T^*(SE(2) \times U) \subseteq T^*(SE(2)\times S^1 \times \R)$, then (\ref{ex2: condition}) holds. Indeed, let $\xi \in \mathfrak{se}(2) \times \R$ and $\lambda_1,\lambda_2 \in \R$ such that
$$\xi_{T^*(SE(2) \times S^1 \times \R)}(\alpha_q) = \lambda_1 R(\alpha_q) + \lambda_2 X_{F_h}^{\omega_Q}(\alpha_q).$$
Comparing the terms of $\displaystyle \frac{\partial}{\partial p_t}$ and $\displaystyle\frac{\partial}{\partial p_\varphi}$ of (\ref{ex2: X_F_H omegaQ y R}) and (\ref{ex2: xi T*phi}), we conclude that $\lambda_1=\lambda_2=0$. 

Thus, $(\mathcal{U},\, \iota_{\mathcal{U}}^*(\omega_h),\, {T^*\phi}_{\vert G \times \mathcal{U}},\, J \circ \iota_{\mathcal{U}})$ and $(\mathcal{V},\, \iota_{\mathcal{V}}^*(\Omega_h),\, {V^*\phi}_{\vert G \times \mathcal{V}},\, J^V \circ \iota_{\mathcal{V}})$ are the symmetric presymplectic Hamiltonian systems of corank $2$ and $1$, respectively, where $\iota_{\mathcal{U}}: \mathcal{U} \hookrightarrow T^*Q$ and $\iota_{\mathcal{V}}: \mathcal{V} \hookrightarrow V^*\Pi$ denote the canonical inclusions.

\begin{remark}
    We remark that $\mathcal{U}$ is an open subset of the complement of the set of relative equilibria of the vector field $X_{F_h}^{\omega_Q}$. But, for simplicity in the computations, we consider $\mathcal{U}$ instead of this last set.
\end{remark}

Finally, we can see that the four components of the momentum map $J^V$ (see (\ref{ex2: J^V})) are first integrals of the time-dependent Hamiltonian dynamics $R_h$ given by (\ref{ex2: R_h}). Moreover, the last component is a first integral which depends explicitly on time.

\subsection{Presymplectic reduction}

Let $\nu= (\nu_1, \nu_2, \nu_3, \nu_4) \in \mathfrak{se}(2)^* \times \R$, which is a regular value of $J$ and $J^V$ due to the freeness of the actions $T^*\phi$ and $V^*\phi$. Then, $(J \circ \iota_{\mathcal{U}})^{-1}(\nu)$ is the $6$-dimensional submanifold of $\mathcal{U}$ that can be identified with the regular submanifold of $SE(2) \times U\times \R^2$ defined by
$$(J \circ \iota_{\mathcal{U}})^{-1}(\nu)= \left\{  (\theta,x,y,\varphi,t, p_\varphi, p_t)\in SE(2)\times U\times \R^2 \,\,\vert\,\,  H_\nu(\theta, x,y,\varphi,t, p_\varphi)= -\nu_4\right\}$$
where $H_\nu : SE(2) \times U \times \R \to \R$ is the function given by
\begin{equation}\label{ex: H_nu}
    \begin{split}
        H_\nu(\theta, x,y,\varphi,t, p_\varphi) = & \frac{1}{2m}(\nu_2^2+ \nu_3^2) +\frac{1}{2I_1}(\nu_1+y\nu_2-x\nu_3)^2 \\ & + \frac{1}{2I_2}p_\varphi^2 + \w p_\varphi + V(\varphi + t\w).
    \end{split}
\end{equation}

Moreover, the level set $(J^V \circ \iota_{\mathcal{V}})^{-1}(\nu)$ is the $5$-dimensional submanifold given by
$$(J^V \circ \iota_{\mathcal{V}})^{-1}(\nu)= \left\{ (\theta,x,y,\varphi,t, p_\varphi)\in SE(2)\times U\times \R \,\,\vert\,\, H_\nu(\theta, x,y,\varphi,t, p_\varphi)= -\nu_4 \right\}.$$

Note that $(J \circ \iota_{\mathcal{U}})^{-1}(\nu) = H_\nu^{-1}(-\nu_4) \times \R$ and $(J^V \circ \iota_{\mathcal{V}})^{-1}(\nu) = H_\nu^{-1}(-\nu_4)$. Using the constraint that defines $(J \circ \iota_{\mathcal{U}})^{-1}(\nu)$ and $(J^V \circ \iota_{\mathcal{V}})^{-1}(\nu)$, we deduce that
\begin{equation}\label{ex2: constraint J}
    \begin{split}
        \iota_\nu^*&\left(\frac{1}{I_1}(\nu_1+y\nu_2-x\nu_3)(\nu_2dy -\nu_3dx) + \frac{dV}{d\varphi}(\varphi + t\w)d\varphi \right.\\ &\left.+\w \frac{dV}{d\varphi}(\varphi + t\w)dt + \frac{1}{I_2}p_\varphi dp_\varphi +\w dp_\varphi\right)=0, 
    \end{split}
\end{equation}
for all $(\theta,x,y,\varphi,t, p_\varphi, p_t) \in (J \circ \iota_{\mathcal{U}})^{-1}(\nu)$, and 
\begin{equation}\label{ex2: constraint J^V}
    \begin{split}
        (\iota_\nu^V)^*&\left(\frac{1}{I_1}(\nu_1+y\nu_2-x\nu_3)(\nu_2dy -\nu_3dx) + \frac{dV}{d\varphi}(\varphi + t\w)d\varphi \right.\\ &\left.+\w \frac{dV}{d\varphi}(\varphi + t\w)dt + \frac{1}{I_2}p_\varphi dp_\varphi +\w dp_\varphi\right)=0,  
    \end{split}
\end{equation}
for all $(\theta,x,y,\varphi,t, p_\varphi) \in (J^V \circ \iota_{\mathcal{V}})^{-1}(\nu)$. Here, $\iota_\nu : (J \circ \iota_{\mathcal{U}})^{-1}(\nu) \hookrightarrow T^*(SE(2) \times U)$ and $\iota_\nu^V : (J^V \circ \iota_{\mathcal{V}})^{-1}(\nu) \hookrightarrow T^*(SE(2)) \times U\times \R$ denote the canonical inclusions.

In what follows, we will describe the quotient spaces $(J \circ \iota_{\mathcal{U}})^{-1}(\nu)/(SE(2) \times \R)_\nu$ and $(J^V \circ \iota_{\mathcal{V}})^{-1}(\nu)/(SE(2) \times \R)_\nu$, where $(SE(2) \times \R)_\nu$ denotes the isotropy subgroup (with respect to the coadjoint action of $SE(2) \times \R$ on $(\mathfrak{se}(2) \times \R)^*$) of $\nu$.

\begin{remark}\label{ex2: SE(2)}
    A direct computation proves that 
    \begin{itemize}
    \item[(i)] If $\nu= (\nu_1, \nu_2, \nu_3, \nu_4) \in \mathfrak{se}(2)^* \times \R$ is such that $(\nu_2, \nu_3)\neq (0,0)$, then
        $$(SE(2)\times \R)_\nu = SE(2)_\nu\times \R = \{(0,\lambda \nu_2, \lambda \nu_3) \in SE(2) \,\mid\, \lambda\in\R \}\times \R \cong \R^2$$
    \item[(ii)] If $\nu = (\nu_1, 0,0,\nu_4) \in \mathfrak{se}(2)^* \times \R$, then $(SE(2) \times \R)_\nu = SE(2) \times \R$.
    \end{itemize}
\end{remark}

We will begin with case \textit{(i)}. The treatment of case \textit{(ii)} is analogous; therefore, at the end of this section we simply describe the corresponding reduced objects.

\subsubsection{Presymplectic reduction: case $(\nu_2, \nu_3)\neq (0,0)$}

In this case, we have:

\begin{itemize}
    \item \textbf{Reduced spaces $(J \circ \iota_{\mathcal{U}})^{-1}(\nu) /(SE(2) \times \R)_\nu$ and $(J^V \circ \iota_{\mathcal{V}})^{-1}(\nu) /(SE(2) \times \R)_\nu$.}
    
According to (\ref{ex2: T*phi}) and (\ref{ex2: V*phi}), the isotropy subgroup $(SE(2)\times \R)_\nu \cong \R^2$ acts over $SE(2)\times U\times \R^2$ and $SE(2)\times U\times \R$, respectively, as follows
$$(\lambda, t_0) \cdot (\theta,x,y,\varphi,t, p_\varphi, p_t) = ( \theta, \lambda \nu_2 + x, \lambda \nu_3 +y, \varphi - t_0\w, t+t_0, p_\varphi, p_t),$$
and
$$(\lambda, t_0) \cdot (\theta,x,y,\varphi,t, p_\varphi) = ( \theta, \lambda \nu_2 + x, \lambda \nu_3 +y, \varphi - t_0\w, t+t_0, p_\varphi).$$
 
Thus, we have the diffeomorphisms
$$(SE(2)\times U\times \R^2)/(SE(2) \times \R)_\nu \cong SE(2)/SE(2)_\nu \times U/\R \times \R^2$$
and
$$(SE(2)\times U\times \R)/(SE(2) \times \R)_\nu \cong SE(2)/SE(2)_\nu \times U/\R \times \R$$
where the action of $SE(2)_\nu$ over the group $SE(2)$ is the left action induced by the group structure and the action of $\R$ over $U$ is the translation by $(-t_0\w, t_0)$.

Moreover, the following map 
$$\begin{array}{rcl}
        \Phi\colon SE(2)/SE(2)_\nu \times U/\R \times \R^2& \longrightarrow & T^*S^1 \times \widetilde{U} \times \R^2 \\
        ([(\theta,x,y)],[(\varphi,t)], p_\varphi, p_t) & \longmapsto & \left(\theta, \displaystyle \frac{x\nu_3-y\nu_2}{\nu_2^2+\nu_3^2}, \varphi + t\w, p_\varphi,p_t\right)
    \end{array}$$
is a diffeomorphism, where $\widetilde{U}$ is the open subset of $S^1$ given by $$\widetilde{U} = \left\{\widetilde{\varphi}\in S^1 \,\left|\, dV(\widetilde{\varphi}) \neq 0\right. \right\}.$$ 
Its inverse is given by
$$\begin{array}{rcl}
        \Phi^{-1} \colon T^*S^1 \times \widetilde{U} \times \R^2 & \longrightarrow &  SE(2)/SE(2)_\nu \times U/\R \times \R^2\\
        (\widetilde{\theta}, \widetilde{p}_\theta, \widetilde{\varphi}, \widetilde{p}_\varphi, \widetilde{p}_t) & \longmapsto & ([(\widetilde{\theta}, \nu_3 \widetilde{p}_\theta, -\nu_2 \widetilde{p}_\theta)], [(\widetilde{\varphi},0)],\widetilde{p}_\varphi,\widetilde{p}_t)
\end{array}$$
So,
$$(SE(2)\times U\times \R^2)/(SE(2) \times \R)_\nu \cong T^*S^1 \times \widetilde{U} \times \R^2\subseteq T^*\mathbb{T}^2 \times \R$$

In the same way, one proves
$$(SE(2)\times U\times \R)/(SE(2) \times \R)_\nu \cong T^*S^1 \times \widetilde{U} \times \R \subseteq T^*\mathbb{T}^2$$

On the other hand, the function $H_\nu$, given in (\ref{ex: H_nu}), is $(SE(2)\times \R)_\nu$-invariant. Then, it induces a function $\widetilde{H}_\nu : (SE(2) \times U \times \R)/(SE(2)\times \R)_\nu\to \R$ which is seen under the identification $\Phi$ by the function $\widetilde{H}_\nu: T^*S^1 \times \widetilde{U} \times \R \to \R$ given by
$$\widetilde{H}_\nu(\widetilde{\theta}, \widetilde{p}_\theta, \widetilde{\varphi}, \widetilde{p}_\varphi) = \frac{1}{2m}(\nu_2^2+ \nu_3^2) +\frac{1}{2I_1}(\nu_1-(\nu_2^2+ \nu_3^2)\widetilde{p}_\theta)^2 + \frac{1}{2I_2}\widetilde{p}_\varphi^2 + \w \widetilde{p}_\varphi + V(\widetilde{\varphi}),$$
for all $(\widetilde{\theta}, \widetilde{p}_\theta, \widetilde{\varphi}, \widetilde{p}_\varphi) \in T^*S^1 \times \widetilde{U} \times \R$. Thus, the reduced spaces are
$$(J \circ \iota_{\mathcal{U}})^{-1}(\nu) /(SE(2)\times \R)_\nu \cong \widetilde{H}_\nu^{-1}(-\nu_4) \times \R \subset T^*S^1 \times \widetilde{U} \times \R^2,$$
and
$$(J^V \circ \iota_{\mathcal{V}})^{-1}(\nu) /(SE(2)\times \R)_\nu \cong \widetilde{H}_\nu^{-1}(-\nu_4) \subset T^*S^1 \times \widetilde{U} \times \R.$$

\item \textbf{Reduced presymplectic structures.}

Let $\iota_\nu : (J \circ \iota_{\mathcal{U}})^{-1}(\nu) \hookrightarrow T^*(SE(2) \times U)$ and $\iota_\nu^V : (J^V \circ \iota_{\mathcal{V}})^{-1}(\nu) \hookrightarrow T^*(SE(2)) \times U \times \R$ be the canonical inclusions. Then, from (\ref{ex2: omega_h}) and (\ref{ex2: constraint J}), it follows that
\begin{equation}
        \iota_\nu^*\omega_h = d\theta \wedge (\nu_2dy - \nu_3dx) + (d\varphi +\w dt) \wedge dp_\varphi.
\end{equation}
Analogously, from (\ref{ex2: Omega_h}) and (\ref{ex2: constraint J^V}), we obtain that
\begin{equation}
    (\iota_\nu^V)^*\Omega_h = d\theta \wedge (\nu_2dy - \nu_3dx) + (d\varphi +\w dt) \wedge dp_\varphi.
\end{equation}

On the other hand, let $\pi_\nu : (J \circ \iota_{\mathcal{U}})^{-1}(\nu) \to \widetilde{H}_\nu^{-1}(-\nu_4) \times \R$ and $\pi_\nu^V : (J^V \circ \iota_{\mathcal{V}})^{-1}(\nu) \to \widetilde{H}_\nu^{-1}(-\nu_4)$ be the quotient projections given by
$$\pi_\nu(\theta,x,y,\varphi,t, p_\varphi, p_t) = \left(\theta, \frac{x\nu_3-y\nu_2}{\nu_2^2+\nu_3^2}, \varphi + t\w, p_\varphi, p_t\right),$$
for all $(\theta,x,y,\varphi,t, p_\varphi, p_t) \in (J \circ \iota_{\mathcal{U}})^{-1}(\nu)$ and
$$\pi_\nu^V(\theta,x,y,\varphi,t, p_\varphi) = \left(\theta, \frac{x\nu_3-y\nu_2}{\nu_2^2+\nu_3^2}, \varphi + t\w, p_\varphi\right),$$
for all $(\theta,x,y,\varphi,t, p_\varphi) \in (J^V \circ \iota_{\mathcal{V}})^{-1}(\nu)$. Then, if $j_\nu: \widetilde{H}_\nu^{-1}(-\nu_4)\times \R \hookrightarrow T^*S^1 \times \widetilde{U} \times \R^2$ and $j_\nu^V: \widetilde{H}_\nu^{-1}(-\nu_4) \hookrightarrow T^*S^1 \times \widetilde{U} \times \R$ are the canonical inclusions, the reduced presymplectic $2$-forms $\omega_h^\nu \in \Omega^2(\widetilde{H}_\nu^{-1}(-\nu_4) \times \R)$ and $\Omega_h^\nu \in \Omega^2(\widetilde{H}_\nu^{-1}(-\nu_4))$ are
\begin{equation}\label{ex2: omega h nu}
    \omega_h^\nu = j_\nu^*\left(-(\nu_2^2+ \nu_3^2)d\widetilde{\theta}\wedge d\widetilde{p}_\theta + d\widetilde{\varphi} \wedge d\widetilde{p}_\varphi\right)
\end{equation}
and
\begin{equation}\label{ex2: Omega h nu}
    \Omega_h^\nu = (j_\nu^V)^*\left(-(\nu_2^2+ \nu_3^2)d\widetilde{\theta}\wedge d\widetilde{p}_\theta + d\widetilde{\varphi} \wedge d\widetilde{p}_\varphi\right).
\end{equation}

\item \textbf{Reduced dynamics.}

The restrictions to $(J \circ \iota_{\mathcal{U}})^{-1}(\nu)$ of vector fields $X_{F_h}^{\omega_Q}$ and $R$, given by (\ref{ex2: X_F_H omegaQ y R}) are
\begin{align*}
    {X_{F_h}^{\omega_Q}}_{|(J \circ \iota_{\mathcal{U}})^{-1}(\nu)}  = & \frac{\partial}{\partial t} -\w \frac{dV}{d\varphi}(\varphi + t\w) \frac{\partial}{\partial p_t} + \frac{1}{I_1}(\nu_1+y\nu_2 -x\nu_3) \frac{\partial}{\partial \theta} + \frac{1}{m}\nu_2 \frac{\partial}{\partial x} \\ &+\frac{1}{m}\nu_3 \frac{\partial}{\partial y}  + \frac{1}{I_2}p_\varphi \frac{\partial}{\partial \varphi} - \frac{dV}{d\varphi}(\varphi + t\w) \frac{\partial}{\partial p_\varphi},
    \\ R_{|(J \circ \iota_{\mathcal{U}})^{-1}(\nu)} = & \frac{\partial}{\partial p_t}.
\end{align*}
In addition, these vector fields are $\pi_\nu$-projectable, respectively, over the vector fields
\begin{align*}
    X_\nu = &\w \frac{\partial}{\partial\widetilde{\varphi}} - \w \frac{dV}{d\varphi}(\widetilde{\varphi}) \frac{\partial}{\partial\widetilde{p}_t} + \frac{1}{I_1}(\nu_1-(\nu_2^2 + \nu_3^2)\widetilde{p}_\theta) \frac{\partial}{\partial \widetilde{\theta}} + \frac{1}{I_2}\widetilde{p}_\varphi \frac{\partial}{\partial \widetilde{\varphi}} - \frac{dV}{d\varphi}(\widetilde{\varphi}) \frac{\partial}{\partial \widetilde{p}_\varphi},
    \\ R_\nu =& \frac{\partial}{\partial \widetilde{p}_t}
\end{align*}

On the other hand, the restriction to $(J^V \circ \iota_{\mathcal{V}})^{-1}(\nu)$ of the vector field $R_h$, given by (\ref{ex2: R_h}) is
\begin{align*}
    {R_h}_{|(J^V \circ \iota_{\mathcal{V}})^{-1}(\nu)} = & \frac{\partial}{\partial t} + \frac{1}{I_1}(\nu_1+y\nu_2 -x\nu_3) \frac{\partial}{\partial \theta} + \frac{1}{m}\nu_2 \frac{\partial}{\partial x} +\frac{1}{m}\nu_3 \frac{\partial}{\partial y} \\ & + \frac{1}{I_2}p_\varphi\frac{\partial}{\partial \varphi} - \frac{dV}{d\varphi}(\varphi + t\w) \frac{\partial}{\partial p_\varphi},
\end{align*}
and its $\pi_\nu^V$-projection is 
\begin{align*}
    R_h^\nu = \w \frac{\partial}{\partial\widetilde{\varphi}}  + \frac{1}{I_1}(\nu_1-(\nu_2^2 + \nu_3^2)\widetilde{p}_\theta) \frac{\partial}{\partial \widetilde{\theta}} + \frac{1}{I_2}\widetilde{p}_\varphi\frac{\partial}{\partial \widetilde{\varphi}} - \frac{dV}{d\varphi}(\widetilde{\varphi}) \frac{\partial}{\partial \widetilde{p}_\varphi}.
\end{align*}

\end{itemize}

Thus, we have obtained the reduced presymplectic manifold of corank $2$
$$\left(\widetilde{H}_\nu^{-1}(-\nu_4) \times \R, \,\, j_\nu^*\left(-(\nu_2^2+ \nu_3^2)d\widetilde{\theta}\wedge d\widetilde{p}_\theta + d\widetilde{\varphi} \wedge d\widetilde{p}_\varphi\right), \,\, X_\nu, \,\,R_\nu\right),$$
and the reduced presymplectic manifold of corank $1$
$$\left(\widetilde{H}_\nu^{-1}(-\nu_4), \,\, (j_\nu^V)^*\left(-(\nu_2^2+ \nu_3^2)d\widetilde{\theta}\wedge d\widetilde{p}_\theta + d\widetilde{\varphi} \wedge d\widetilde{p}_\varphi\right),\,\, R_h^\nu \right).$$

\subsubsection{Presymplectic reduction: case $\nu=(\nu_1,0,0,\nu_4)$}
    Following the same ideas, let us consider the case in which item \textit{(ii)} of Remark \ref{ex2: SE(2)} holds, that is, $\nu=(\nu_1,0,0,\nu_4)$. Then, the reduced presymplectic manifolds $(J \circ \iota_{\mathcal{U}})^{-1}(\nu)/(SE(2) \times \R)$ and $(J^V \circ \iota_{\mathcal{V}})^{-1}(\nu)/(SE(2) \times \R)$ of corank $2$ and $1$, respectively, are given by
    $$\left(\widehat{H}_\nu^{-1}(-\nu_4) \times \R, \,\, \omega_h^\nu =0, \,\, X_\nu, \,\,R_\nu\right) \quad \text{and} \quad \left(\widehat{H}_\nu^{-1}(-\nu_4), \,\, \Omega_h^\nu=0,\,\, R_h^\nu \right).$$
    Here, if $\widehat{U}$ is the open subset of $S^1$ defined by
    $$\widehat{U} = \left\{\widehat{\varphi}\in S^1 \,\left|\, dV(\widehat{\varphi}) \neq 0\right. \right\},$$
    then $\widehat{H}_\nu: \widehat{U} \times \R \to \R$ is given by 
    $$\widehat{H}_\nu(\widehat{\varphi}, \widehat{p}_\varphi)= \tfrac{1}{2I_1}\nu_1^2 +\tfrac{1}{2I_2}\widehat{p}_\varphi^2+ V(\widehat{\varphi}) + \w \widehat{p}_\varphi,$$
    for all $(\widehat{\varphi}, \widehat{p}_\varphi) \in \widehat{U} \times \R$. Moreover, the reduced dynamics is given by 
    \begin{align*}
        X_\nu & = \left(\w + \frac{1}{I_2}\widehat{p}_\varphi \right) \frac{\partial}{\partial \widehat{\varphi}}- \frac{dV}{d\varphi}(\widehat{\varphi})\frac{\partial}{\partial \widehat{p}_\varphi} -\w \frac{dV}{d\varphi}(\widehat{\varphi})\frac{\partial}{\partial \widehat{p}_t},
        \\ R_\nu & = \frac{\partial}{\partial \widehat{p}_t}
    \end{align*}
    and 
    $$R_h^\nu = \left(\w + \frac{1}{I_2}\widehat{p}_\varphi \right) \frac{\partial}{\partial \widehat{\varphi}}- \frac{dV}{d\varphi}(\widehat{\varphi})\frac{\partial}{\partial \widehat{p}_\varphi}.$$

\section{Conclusions and future work}\label{sec: future}
In this paper, we have developed a reduction procedure of time-dependent Hamiltonian systems, where the configuration space is the total space of a fibration over the real line $\Pi: Q \to \R$. Next, we point out our results:
\begin{itemize}
    \item The extended phase space of momenta $T^*Q$ is a principal $\mathbb{R}$-bundle over $V^*\Pi$,
    the phase space of momenta, which is the dual bundle of the vertical bundle $V\Pi$ of $\Pi$. The dual map $\mu:T^*Q \to V^*\Pi$ of the canonical inclusion $i: V\Pi \hookrightarrow TQ$ is the principal bundle projection. Given a Hamiltonian section $h: V^*\Pi\to T^*Q$ of $\mu: T^*Q\to V^*\Pi$, \textit{we have endowed $T^*Q$ and $V^*\Pi$ with presymplectic structures $\omega_h$ and $\Omega_h$ of corank 2 and 1, respectively}.
    \item Our main contribution is \textit{a reduction process for a symmetry Lie group $G$ which acts on $Q$ and does not necessarily preserve the fibration $\Pi$}, extending previous results in \cite{IGNACIO}. More precisely, we impose the weaker condition that the action preserves the $1$-form $\Pi^*(dt)$. This condition is equivalent to the existence of a multiplicative function $m:G \to \R$ on $G$ and, infinitesimally, to the existence of a Lie algebra $1$-cocycle $c_\phi: \g \to \R$, such that the failure of the action to preserve the fibration depends on the function $m$. In addition, \textit{the cocycle plays a fundamental role in the construction of a presymplectic momentum map $J$ on $T^*Q$, which induces another presymplectic momentum map $J^V$ on $V^*\Pi$}. As in the time-independent case, the momentum map $J^V$ is a first integral of the time-dependent Hamiltonian dynamics.

\item With all the previous ingredients, for a regular value $\nu \in \g^*$ of the momentum maps 
\textit{we have shown that the quotient spaces $J^{-1}(\nu)/G_\nu$ and $(J^V)^{-1}(\nu)/G_\nu$ inherit presymplectic structures $\omega_h^\nu$ and $\Omega_h^\nu$ of corank 2 and 1}, respectively. Moreover, these quotients remain related by a reduced principal $\R$-bundle. Thus, the principal $\mathbb{R}$-bundle structure relating the extended and restricted phase spaces is preserved by the reduction process.
\item Finally, the theory has been illustrated through two examples: the time-dependent $N$-dimensional harmonic oscillator and the time-dependent Elroy's Beanie.
\end{itemize}
In order to ensure that the reduced presymplectic structures have the expected coranks, we have to exclude from the discussion the relative equilibria of the Hamiltonian vector field $X_{F_h}^{\omega_Q}$ of the homogeneous function $F_h:T^*Q \to \R$ (with respect to the canonical symplectic structure $\omega_Q$ on $T^*Q$) induced by the Hamiltonian section $h$. The study of these relative equilibria and their stability, following some results in \cite{Marsden_1992}, is left for future work. Note that the image by $\mu$ of a relative equilibrium for $X_{F_h}^{\omega_Q}$ is a relative equilibrium for the time-dependent Hamiltonian dynamics. 

As another future direction, we plan to discuss a more explicit description of the reduced presymplectic spaces and the reduced dynamics on them for the particular case when the isotropy subgroup $G_\nu$, with $\nu \in \g^*$, coincides with the full symmetry Lie group $G$. This theory has been intensively discussed in the time-independent case (see, for instance, \cite{MR2337886}) and we will try to extend these results for the time-dependent case.

On the other hand, in \cite{IGNACIO}, the authors developed a reduction procedure in the framework of symplectic principal $\R$-bundles and applied it to the standard symplectic principal $\R$-bundle associated with a fibration. Therefore, it would be interesting to formulate the theory developed in the present paper replacing the presymplectic principal $\R$-bundles by symplectic principal $\R$-bundles, thereby extending the results of \cite{IGNACIO} to the case where the fibration $\Pi$ is not $G$-invariant. This is the aim of a work in progress (see \cite{BENITEZ}).

\bibliographystyle{plain}
\bibliography{sample}

\end{document}